\renewcommand{\labelenumi}{(\theenumi)}
\renewcommand{\theenumi}{\roman{enumi}}
\newtheorem{definition}{Definition}[section]
\newtheorem{theorem}[definition]{Theorem}
\newtheorem{lemma}[definition]{Lemma}
\newtheorem{corollary}[definition]{Corollary}
\newtheorem{example}[definition]{Example}
\newtheorem{problem}[definition]{Problem}
\newtheorem{note}[definition]{Note}
\newtheorem{assumption}[definition]{Assumption}
\newtheorem{proposition}[definition]{Proposition}
\begin{document}
\title{\bf  
Leonard triples of $q$-Racah type \\
}
%Tatsuro Ito\footnote{Supported in part by JSPS grant
%18340022.} $\;$   and
\author{
Paul Terwilliger}
\date{}
%\footnote{This author gratefully acknowledges 
%support from the FY2007 JSPS Invitation Fellowship Program
%for Reseach in Japan (Long-Term), grant L-07512.}
%}
%\date{}
%to get date printout, comment out above line

\maketitle
\begin{abstract}
Let $\mathbb F$ denote a field, and let $V$
denote a vector space over $\mathbb F$ with finite positive
dimension. Pick a nonzero $q \in \mathbb F$ such that
$q^4 \not=1$, and let $A,B,C$ denote a Leonard triple on
$V$ that has $q$-Racah type. We show that there exist
invertible $W, W', W'' $ in
${\rm End}(V)$  such that
(i) $A$ commutes with $W$ and $W^{-1}BW-C$;
(ii) $B$ commutes with $W'$ and $(W')^{-1}CW'-A$;
(iii) $C$ commutes with $W''$ and $(W'')^{-1}AW''-B$.
Moreover each of $W,W', W''$ is unique up to multiplication by
a nonzero scalar in $\mathbb F$. We show that the three elements
$W'W, W''W', WW''$ mutually commute, and their product is
a scalar multiple of the identity. A number of related results are
obtained.

\bigskip
\noindent
{\bf Keywords}. 
 Leonard pair, Leonard triple.
\hfil\break
\noindent {\bf 2010 Mathematics Subject Classification}. 
Primary: 15A21. Secondary: 33D45.
 \end{abstract}

\section{Introduction}

This paper is about a linear algebraic object called a Leonard triple
\cite{mlt}.
Before describing this object, we first describe
a more basic object called a Leonard pair
\cite{2lintrans}.
Let $\mathbb F$ denote a field, and let $V$ denote a vector space
over $\mathbb F$ with finite positive dimension.
Let ${\rm End}(V)$ denote the $\mathbb F$-algebra consisting of
the $\mathbb F$-linear maps from $V$ to $V$.
According to 
\cite[Definition~1.1]{2lintrans},
a Leonard pair on $V$ is an ordered pair of
maps in 
${\rm End}(V)$ 
such that for
each map, there exists a basis of $V$ with respect to which the
matrix representing that map is diagonal and the matrix representing
the other map is irreducible tridiagonal. 
As explained in 
\cite[Appendix~A]{2lintrans}, 
the Leonard pairs provide a linear algebra interpretation
of a theorem of Doug Leonard
\cite{leonard},
\cite[p.~260]{bannaiIto}
concerning the $q$-Racah polynomials and their
relatives 
in the Askey scheme.
The Leonard pairs are classified up to isomorphism
\cite[Theorem~1.9]{2lintrans} and described further in
\cite{nomzerodiag,
nom2006,
ter24,
ter2004,
ter2005,
ter2005b,
vidter}. 
For a survey 
see \cite{madrid}.
 In \cite{mlt} Brian Curtin introduced the concept of a Leonard triple
as  
 a natural generalization of
a Leonard pair. 
According to 
 \cite[Definition~1.2]{mlt},
a Leonard triple on $V$ is a 3-tuple of
maps in 
${\rm End}(V)$ 
such that for
each map, there exists a basis of $V$ with respect to which the
matrix representing that map is diagonal and the matrices representing
the other two maps are irreducible tridiagonal.
\medskip

%%%%%%
%Curtin introduced the Leonard triples 
% in order to
% describe
%a distance-regular graph that contains a spin model in its
%Bose-Mesner algebra
%\cite{curtNom}. See
%\cite{caughman},
%\cite{curt2} for more on this topic.

\noindent We give some background on Leonard triples.
Their study began with Curtin's comprehensive
treatment of a special case, said to be modular.
By \cite[Definition~1.4]{mlt} a Leonard triple on $V$
is modular whenever 
for each element of the triple there exists an antiautomorphism
of ${\rm End}(V)$ that fixes that element and 
swaps the other two elements of the triple.
In
\cite[Section~1]{mlt}
the modular Leonard triples are classified 
up to isomorphism.
The paper \cite{curt2} gives a natural correspondance between
the modular Leonard triples, and a family of Leonard pairs said to have spin
\cite[Definition~1.2]{curt2}.
The spin Leonard pairs
describe the irreducible modules for the subconstituent algebra
of a 
 distance-regular graph whose
Bose-Mesner algebra contains a spin model
\cite{caughman,
curtNom}. 
We will say more about modular Leonard triples shortly.
The general Leonard triples have recently been
 classified up to isomorphism,
via the following approach.
Using the eigenvalues one breaks down the analysis
into four special cases, called
  $q$-Racah, Racah, Krawtchouk, and Bannai/Ito
\cite{gao1,
bohou2015}.
The Leonard triples are classified up to isomorphism
in 
\cite{huang1}
(for  $q$-Racah type);
\cite{gao}
(for Racah type);
\cite{kang}
(for Krawtchouk type);
\cite{bohou2015b} (for Bannai/Ito type and even diameter);
\cite{bohou2013} (for Bannai/Ito type and odd diameter).
Additional results on Leonard triples can be found in
\cite{bohou2013b,
huang2, 
koro,
nomzerodiag,
zitnik}
(for $q$-Racah type);
\cite{alnajjar,
liu,post}
(for Racah type);
\cite{balmaceda,
stefko}
(for Krawtchouk type);
\cite{brown,
banItodunkl,
genest,
genest2,
   bouhou2016,
bohou2014b,
tsu}
(for Bannai/Ito type).
We mention two attractive families of Leonard triples,
said to be totally bipartite or totally almost-bipartite (abipartite).
The totally bipartite Leonard triples of $q$-Racah type are described
in \cite{bohou2014}; these are closely related to the finite-dimensional
irreducible modules for the algebra $U_q(\mathfrak{so}_3)$
\cite{hav1, hav2, hav3}.
The totally bipartite/abipartite Leonard triples of Bannai/Ito type
are described in 
\cite{brown}.
\medskip

\noindent  Turning to the present paper,
we consider a Leonard triple $A,B,C$ on $V$ that has $q$-Racah type.
In order to motivate our results, assume for the moment that
$A,B,C$ is modular.
By \cite[Corollary~2.6]{curt2} there exist invertible
$W,W',W''$ in 
${\rm End}(V)$ such that
(i) $AW=WA$ and $BW=WC$;
(ii) $BW'=W'B$ and $CW'=W'A$;
(iii) $CW''=W''C$ and $AW''=W''B$.
Moreover by \cite[Lemma~3.6]{curt2} each of $W, W', W''$ is unique up to 
multiplication by a nonzero scalar in $\mathbb F$.
By \cite[Lemma~9.3]{mlt} there exists
an invertible $P \in {\rm End}(V)$ such that
each of $W'W$, $W''W'$, $WW''$ is a scalar multiple of $P$.
Moreover
\begin{eqnarray*}
P^{-1}AP=B, \qquad \quad  
P^{-1}BP=C, \qquad \quad
P^{-1}CP=A
\end{eqnarray*}
and
 $P^3$ is a scalar multiple of the identity.
We now summarize our results.
Dropping the modular assumption,
we show that there exist
invertible $W, W', W'' $ in
${\rm End}(V)$  such that
(i) $A$ commutes with $W$ and $W^{-1}BW-C$;
(ii) $B$ commutes with $W'$ and $(W')^{-1}CW'-A$;
(iii) $C$ commutes with $W''$ and $(W'')^{-1}AW''-B$.
Moreover each of $W,W', W''$ is unique up to multiplication by
a nonzero scalar in $\mathbb F$. We show that the three elements
$W'W, W''W', WW''$ mutually commute, and their product is
a scalar multiple of the identity. 
Define
$\overline A=W^{-1}BW-C$ and similary define
$\overline B$,
$\overline C$.
By construction 
$\overline A$
commutes with $A$; in fact $\overline A$ is a polynomial in $A$
and we describe this polynomial in several ways.
We describe what happens if one of $A$, $B$, $C$
is conjugated by one of
$W^{\pm 1}$, $(W')^{\pm 1}$, $(W'')^{\pm 1}$;
the result is a polynomial of degree at most $2$ in 
$A$, $B$, $C$,
$\overline A$,
$\overline B$,
$\overline C$.
We also describe what happens if one of
$A$, $B$, $C$
is conjugated by one of
$W^{\pm 2}$, $(W')^{\pm 2}$, $(W'')^{\pm 2}$;
the result is a polynomial of degree at most $3$ in $A$, $B$, $C$.
We indicate how conjugation by
$W^{\pm 2}$, $(W')^{\pm 2}$, $(W'')^{\pm 2}$ is related to
the Lusztig automorphisms recently discovered by 
Baseilhac and Kolb
\cite{baseilhac}.
Using some basic hypergeometric series identities,
we express each of 
$W^{\pm 1}$, $W^{\pm 2}$
as a polynomial in $A$.
We mentioned above that 
$W'W, W''W', WW''$ mutually commute.
To obtain this result we show that (i) each of
$W'W, W''W', WW''$
commutes with $A+B+C$; (ii)
the subalgebra of
${\rm End}(V)$ generated by $A+B+C$
contains every element of 
${\rm End}(V)$ that commutes with $A+B+C$.
Near the end of the paper we show that
$\overline A=
\overline B=
\overline C=0$ if and only if
$A,B,C$ is modular, 
and in this case we recover the results of Curtin
mentioned above.
\medskip

\noindent This paper is organized as follows. Sections 2, 3
contain preliminaries. In Section 4 we review some basic
facts about Leonard triples, and  in Section 5 we consider the 
Leonard triples of $q$-Racah type.
Section 6 contains some trace formulae that will be used later
in the paper.
In Sections 7, 8 we introduce the elements
$W$, 
$W'$, 
$W''$ and
$\overline A$,
$\overline B$,
$\overline C$.
Sections 9, 10 are about conjugation.
In Section 11 we express the elements $W^{\pm 1}$, 
$W^{\pm 2}$ as polynomials in $A$. In Sections 12, 13
we show that
$W'W$, 
$W''W'$, 
$WW''$ mutually commute, and their product is a scalar multiple of
the identity. Section 14 is about the case
$\overline A=
\overline B=
\overline C=0$.

\section{Preliminaries about the eigenvalues}

We now begin our formal argument.
Throughout this section the following notation and
assumptions
are in effect. Fix nonzero $a, q \in \mathbb F$
such that $q^4 \not=1$.  Fix an integer $d \geq 0$, and
define
\begin{eqnarray}
\label{eq:th}
\theta_i = a q^{2i-d}  + a^{-1} q^{d-2i} \qquad \qquad
(0 \leq i \leq d).
\end{eqnarray}

\begin{lemma}
\label{lem1}
For $0 \leq i,j\leq d$,
\begin{eqnarray*}
%%%%%%%%%\label{eq:thetadiff}
\theta_i - \theta_j = (1-q^{2j-2i})(a q^{2i-d}-a^{-1}q^{d-2j}).
\end{eqnarray*}
\end{lemma}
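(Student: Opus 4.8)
The plan is to verify the asserted identity by a direct expansion of both sides, since it is a polynomial identity in the variables $a^{\pm 1}$ and $q^{\pm 1}$ and requires no case analysis and no use of the hypothesis $q^4\neq 1$. First I would substitute the definition (\ref{eq:th}) into the left-hand side, obtaining
\begin{eqnarray*}
\theta_i-\theta_j
= a\bigl(q^{2i-d}-q^{2j-d}\bigr)+a^{-1}\bigl(q^{d-2i}-q^{d-2j}\bigr).
\end{eqnarray*}
Next I would expand the right-hand side by distributing the factor $1-q^{2j-2i}$:
\begin{eqnarray*}
(1-q^{2j-2i})(a q^{2i-d}-a^{-1}q^{d-2j})
= aq^{2i-d}-a^{-1}q^{d-2j}-aq^{2j-d}+a^{-1}q^{d-2i},
\end{eqnarray*}
where in the last two terms I have used $q^{2j-2i}q^{2i-d}=q^{2j-d}$ and $q^{2j-2i}q^{d-2j}=q^{d-2i}$.

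Comparing the two displays term by term shows that both equal
$aq^{2i-d}-aq^{2j-d}+a^{-1}q^{d-2i}-a^{-1}q^{d-2j}$, which completes the proof. The only point requiring any care is the bookkeeping of the exponents of $q$ in the cross terms; there is no genuine obstacle here, and the computation is routine.

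As a consistency check I would also observe that the identity is manifestly antisymmetric in $i$ and $j$ in the expected way: swapping $i\leftrightarrow j$ negates the left-hand side, and on the right one has $(1-q^{2i-2j})(aq^{2j-d}-a^{-1}q^{d-2i}) = -q^{2i-2j}(1-q^{2j-2i})\cdot q^{2j-2i}(aq^{2i-d}-a^{-1}q^{d-2j}) = -(1-q^{2j-2i})(aq^{2i-d}-a^{-1}q^{d-2j})$, confirming the sign. This sanity check, while not logically needed, guards against a transcription error in the exponents.
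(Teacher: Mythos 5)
Your proof is correct and is exactly the argument the paper intends: its proof of this lemma is simply ``Use (\ref{eq:th})'', i.e., substitute the definition of $\theta_i$ and expand, which is what you carry out (with the exponent bookkeeping and the antisymmetry sanity check as harmless extras).
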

\begin{proof} Use
(\ref{eq:th}).
\end{proof}

\begin{lemma}
\label{lem2}
The scalars $\lbrace \theta_i \rbrace_{i=0}^d$ are mutually distinct,
if and only if the following hold:
\begin{enumerate}
\item[\rm (i)] $q^{2i}\not=1$ for $1 \leq i \leq d$;
\item[\rm (ii)] $a^2$ is not among $q^{2d-2}, q^{2d-4}, \ldots, q^{2-2d}$.
\end{enumerate}
\end{lemma}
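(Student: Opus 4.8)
The plan is to reduce everything to the factorization in Lemma~\ref{lem1}. Fix $i,j$ with $0\le i,j\le d$ and $i\ne j$. By Lemma~\ref{lem1}, $\theta_i=\theta_j$ if and only if at least one of the two factors $1-q^{2j-2i}$ and $aq^{2i-d}-a^{-1}q^{d-2j}$ vanishes; the first vanishes precisely when $q^{2(j-i)}=1$, and the second precisely when $a^2=q^{2d-2i-2j}$. Hence $\{\theta_i\}_{i=0}^d$ are mutually distinct if and only if, for every ordered pair $i\ne j$ in $\{0,1,\dots,d\}$, one has \emph{both} $q^{2(j-i)}\ne 1$ and $a^2\ne q^{2d-2i-2j}$. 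Since a universally quantified conjunction is the conjunction of the two universally quantified statements, this splits into two conditions to be analyzed separately.

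First I would treat the condition ``$q^{2(j-i)}\ne 1$ for all $i\ne j$''. As $(i,j)$ runs over the ordered pairs with $i\ne j$, the difference $j-i$ runs over $\{\pm 1,\pm 2,\dots,\pm d\}$, and $q^{2(j-i)}=1$ if and only if $q^{-2(j-i)}=1$ (as $q\ne 0$). So this condition is equivalent to $q^{2k}\ne 1$ for $1\le k\le d$, which is exactly part (i).

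Next I would treat ``$a^2\ne q^{2d-2i-2j}$ for all $i\ne j$''. Writing $s=i+j$, the exponent equals $2(d-s)$, so I must identify exactly which integers occur as $i+j$ when $0\le i,j\le d$ and $i\ne j$. On the one hand $1\le s\le 2d-1$, since $s=0$ forces $i=j=0$ and $s=2d$ forces $i=j=d$. On the other hand every $s$ in this range is attained with $i\ne j$: take $(i,j)=(0,s)$ when $1\le s\le d$, and $(i,j)=(s-d,d)$ when $d<s\le 2d-1$. Therefore $\{2(d-s):1\le s\le 2d-1\}=\{2d-2,2d-4,\dots,2-2d\}$, and the condition becomes: $a^2$ is not among $q^{2d-2},q^{2d-4},\dots,q^{2-2d}$, which is part (ii). (When $d=0$ both the hypothesis and both conclusions are vacuous.) Combining the two equivalences gives the claim.

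I do not expect a genuine obstacle here; the only point needing care is the bookkeeping in the last paragraph, namely confirming that the set of values of $i+j$ attained with $i\ne j$ is precisely $\{1,2,\dots,2d-1\}$ and not a proper subset --- in particular that forbidding $i=j$ does not cost us any of the even values in that range.
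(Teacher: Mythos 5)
Your proof is correct and follows the same route as the paper, whose entire proof is ``Use Lemma \ref{lem1}''; you have simply spelled out the routine bookkeeping (the ranges of $j-i$ and $i+j$, and the fact that vanishing of the product in a field forces a factor to vanish) that the paper leaves implicit.
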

\begin{proof} Use
Lemma
\ref{lem1}.
\end{proof}

%%\noindent For the rest of this section we assume that 
%%$\lbrace \theta_i \rbrace_{i=0}^d$ are mutually distinct.

\begin{lemma}
\label{lem:eachprof}
For $1 \leq i \leq d$,
\begin{eqnarray*}
\frac{q \theta_{i} - q^{-1} \theta_{i-1}}{q^2-q^{-2}} =   a q^{2i-d-1},
\qquad \qquad
\frac{q \theta_{i-1} - q^{-1} \theta_{i}}{q^2-q^{-2}} =   a^{-1} q^{d-2i+1}.
%%%%%%%%%%%\label{eq:twoeq}
\end{eqnarray*}
\end{lemma}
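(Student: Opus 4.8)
The plan is a direct verification from the definition (\ref{eq:th}); there is no genuine obstacle here, only careful bookkeeping of the exponents of $q$. For the first identity I would substitute (\ref{eq:th}) into the numerator. Since $q\theta_i = aq^{2i-d+1} + a^{-1}q^{d-2i+1}$ and $q^{-1}\theta_{i-1} = aq^{2i-d-3} + a^{-1}q^{d-2i+1}$, the two $a^{-1}$-terms cancel, leaving
\[
q\theta_i - q^{-1}\theta_{i-1} = aq^{2i-d+1} - aq^{2i-d-3} = aq^{2i-d-1}\bigl(q^2 - q^{-2}\bigr).
\]
Dividing through by $q^2 - q^{-2}$ — which is nonzero, since the standing assumption $q^4 \neq 1$ together with $q \neq 0$ gives $q^2 \neq q^{-2}$ — produces the asserted value $aq^{2i-d-1}$.

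For the second identity I would proceed in exactly the same way: $q\theta_{i-1} = aq^{2i-d-1} + a^{-1}q^{d-2i+3}$ and $q^{-1}\theta_i = aq^{2i-d-1} + a^{-1}q^{d-2i-1}$, so now the $a$-terms cancel and
\[
q\theta_{i-1} - q^{-1}\theta_i = a^{-1}q^{d-2i+3} - a^{-1}q^{d-2i-1} = a^{-1}q^{d-2i+1}\bigl(q^2 - q^{-2}\bigr),
\]
and division by $q^2-q^{-2}$ yields $a^{-1}q^{d-2i+1}$, as required. Alternatively, once the first identity is in hand the second is obtained from it formally by the substitution $a \mapsto a^{-1}$ followed by the index change $i \mapsto d-i+1$, under which $\theta_i \mapsto \theta_{i-1}$, $\theta_{i-1}\mapsto \theta_i$, and $aq^{2i-d-1} \mapsto a^{-1}q^{d-2i+1}$; one could also route the computation through Lemma~\ref{lem1} by writing $q\theta_i - q^{-1}\theta_{i-1} = q(\theta_i - \theta_{i-1}) + (q-q^{-1})\theta_{i-1}$ and simplifying, but the brute-force substitution above is the most transparent path and is what I would present.
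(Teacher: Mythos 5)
Your proof is correct and follows the same route as the paper, which simply verifies the identities by substituting the definition $\theta_i = aq^{2i-d}+a^{-1}q^{d-2i}$ and simplifying. The exponent bookkeeping and the observation that $q^4\neq 1$ makes $q^2-q^{-2}$ nonzero are exactly what is needed.
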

\begin{proof} Use 
(\ref{eq:th}).
\end{proof}

\begin{corollary} 
\label{lem4}
For $0 \leq i,j\leq d$ such that $|i-j|=1$,
\begin{eqnarray*}
%%%%%%\label{eq:2prod}
\frac{q \theta_i - q^{-1} \theta_j}{q^2-q^{-2}}
\,
\frac{q \theta_j - q^{-1} \theta_i}{q^2-q^{-2}}
= 1.
\end{eqnarray*}
\end{corollary}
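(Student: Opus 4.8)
The plan is to reduce immediately to Lemma \ref{lem:eachprof}. Since $|i-j|=1$, one of $j=i-1$ or $i=j-1$ holds. These two cases are interchanged by swapping the roles of $i$ and $j$, and the product
\begin{eqnarray*}
\frac{q \theta_i - q^{-1} \theta_j}{q^2-q^{-2}}
\,
\frac{q \theta_j - q^{-1} \theta_i}{q^2-q^{-2}}
\end{eqnarray*}
is manifestly symmetric in $i,j$, so it suffices to treat the case $j=i-1$ with $1 \leq i \leq d$.

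In that case the two factors are precisely the two expressions appearing in Lemma \ref{lem:eachprof}, namely $a q^{2i-d-1}$ and $a^{-1}q^{d-2i+1}$. Multiplying these gives $a q^{2i-d-1}\cdot a^{-1} q^{d-2i+1} = q^{(2i-d-1)+(d-2i+1)} = q^{0}=1$, as desired.

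I expect no real obstacle here: this is a direct corollary of Lemma \ref{lem:eachprof}, and the only thing to be careful about is recording that the identity must be checked for both relative orderings of $i$ and $j$ (equivalently, invoking the symmetry of the left-hand side). One could alternatively avoid the case split entirely by substituting the formula (\ref{eq:th}) for $\theta_i,\theta_j$ directly and simplifying, but routing through Lemma \ref{lem:eachprof} is cleaner and shorter.
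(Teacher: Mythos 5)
Your proof is correct and is precisely the argument the paper intends: its proof of this corollary is simply ``Use Lemma \ref{lem:eachprof},'' and your reduction (symmetry of the product in $i,j$, then identifying the two factors with $aq^{2i-d-1}$ and $a^{-1}q^{d-2i+1}$) fills in the same routine details. No issues.
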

\begin{proof} Use
Lemma \ref{lem:eachprof}.
\end{proof}

\begin{lemma} 
\label{lem:ti}
For $0 \leq i \leq d$ pick $0 \not=t_i \in \mathbb F$.
Then the following {\rm (i)--(iii)} are equivalent:
\begin{enumerate}
\item[\rm (i)] For $0 \leq i,j \leq d$ such that $|i-j|=1$,
\begin{eqnarray}
\frac{t_j}{t_i} + \frac{q \theta_i - q^{-1} \theta_j}{q^2-q^{-2}} = 0.
\label{eq:titj}
\end{eqnarray}
\item[\rm (ii)] For $1 \leq i \leq d$,
\begin{eqnarray*}
\frac{t_i}{t_{i-1}} = -a^{-1} q^{d-2i+1}.
\end{eqnarray*}
\item[\rm (iii)] There exists $0 \not=\varepsilon \in  \mathbb F$ such that
\begin{eqnarray*}
t_i = \varepsilon (-1)^i a^{-i} q^{i(d-i)} \qquad \qquad (0 \leq i \leq d).
%%%%%%%%%%\label{eq:tiform}
\end{eqnarray*}
\end{enumerate}
\end{lemma}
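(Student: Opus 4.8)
The plan is to prove the equivalence of (i), (ii), (iii) by establishing the cycle (i)$\Rightarrow$(ii)$\Rightarrow$(iii)$\Rightarrow$(i), which is the natural route since (ii) is a recurrence, (iii) is its closed-form solution, and (i) is the ``symmetric'' form of the recurrence.

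First I would show (i)$\Rightarrow$(ii). Fix $i$ with $1\le i\le d$ and apply (\ref{eq:titj}) twice with the pair $\{i,i-1\}$: once in the form with $j=i-1$ (governing $t_{i-1}/t_i$) and once with the roles reversed (governing $t_i/t_{i-1}$). This gives
\begin{eqnarray*}
\frac{t_{i-1}}{t_i} = -\,\frac{q\theta_i - q^{-1}\theta_{i-1}}{q^2-q^{-2}},
\qquad
\frac{t_i}{t_{i-1}} = -\,\frac{q\theta_{i-1} - q^{-1}\theta_{i}}{q^2-q^{-2}}.
\end{eqnarray*}
The first of these, combined with Lemma \ref{lem:eachprof} (which identifies $(q\theta_i - q^{-1}\theta_{i-1})/(q^2-q^{-2})$ as $aq^{2i-d-1}$), would yield $t_{i-1}/t_i = -aq^{2i-d-1}$, hence $t_i/t_{i-1} = -a^{-1}q^{d-2i+1}$, which is exactly (ii). (One should note in passing that Corollary \ref{lem4} guarantees these two expressions are genuinely reciprocal, so the two applications of (\ref{eq:titj}) are consistent; but only one is needed to extract (ii).)

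Next, (ii)$\Rightarrow$(iii) is a routine telescoping argument: set $\varepsilon = t_0$ and multiply the relations $t_i/t_{i-1} = -a^{-1}q^{d-2i+1}$ for $i=1,\dots,k$ to get $t_k/t_0 = (-1)^k a^{-k} q^{\sum_{i=1}^k (d-2i+1)}$. The exponent sum is $\sum_{i=1}^k(d-2i+1) = kd - 2\cdot\frac{k(k+1)}{2} + k = kd - k^2 = k(d-k)$, giving $t_k = \varepsilon(-1)^k a^{-k} q^{k(d-k)}$ as claimed. Finally, (iii)$\Rightarrow$(i): substitute the closed form into (\ref{eq:titj}). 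For $|i-j|=1$ the ratio $t_j/t_i$ telescopes to $(-1)^{j-i}a^{i-j}q^{j(d-j)-i(d-i)}$; when $j=i+1$ this is $-a^{-1}q^{d-2i-1}$ and when $j=i-1$ it is $-aq^{2i-d-1}$, and in either case Lemma \ref{lem:eachprof} identifies this as $-(q\theta_i - q^{-1}\theta_j)/(q^2-q^{-2})$, so (\ref{eq:titj}) holds.

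The only mild obstacle is bookkeeping: keeping the two indices straight in the symmetric condition (i) and making sure the ``$j=i\pm1$'' cases of Lemma \ref{lem:eachprof} are invoked with the correct orientation. There is no genuine difficulty — every implication reduces directly to the already-proved Lemma \ref{lem:eachprof}, with Corollary \ref{lem4} available as a consistency check — so I expect the proof to be short once the indices are pinned down.
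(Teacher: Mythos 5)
Your proof is correct and uses essentially the same ingredients as the paper: Lemma \ref{lem:eachprof} to identify the ratios and a telescoping/induction argument for the closed form. The only difference is organizational — you close the cycle (i)$\Rightarrow$(ii)$\Rightarrow$(iii)$\Rightarrow$(i), while the paper proves (i)$\Leftrightarrow$(ii) and (ii)$\Leftrightarrow$(iii) — which is mathematically immaterial.
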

\begin{proof}
${\rm (i)} \Leftrightarrow {\rm (ii)}$ Use
Lemma
\ref{lem:eachprof}.
\\
\noindent
${\rm (ii)} \Rightarrow {\rm (iii)}$  By induction on $i$.
\\
\noindent
${\rm (iii)} \Rightarrow {\rm (ii)}$  Routine.
\end{proof}

\noindent We now consider when $q+q^{-1}$ is included among
 $\lbrace \theta_i
\rbrace_{i=0}^d$.

\begin{lemma}
\label{lem:qqth}
For $0 \leq i \leq d$,
\begin{eqnarray*}
\theta_i - q-q^{-1} = (a-q^{d-2i+1})(a-q^{d-2i-1})q^{2i-d}a^{-1}.
\end{eqnarray*}
\end{lemma}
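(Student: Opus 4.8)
The plan is to verify the identity by direct substitution of the defining formula~(\ref{eq:th}) for $\theta_i$, exactly in the spirit of the proofs of Lemmas~\ref{lem1} and~\ref{lem:eachprof}. First I would rewrite the left-hand side as
\[
\theta_i - q - q^{-1} = a q^{2i-d} + a^{-1} q^{d-2i} - q - q^{-1}.
\]
For the right-hand side, I would use the factorizations $q^{d-2i+1} = q\cdot q^{d-2i}$ and $q^{d-2i-1} = q^{-1}\cdot q^{d-2i}$ to expand
\[
(a - q^{d-2i+1})(a - q^{d-2i-1}) = a^2 - a(q+q^{-1})q^{d-2i} + q^{2(d-2i)}.
\]
Multiplying this by $q^{2i-d} a^{-1}$ and simplifying the powers of $q$ then gives $a q^{2i-d} - (q+q^{-1}) + a^{-1} q^{d-2i}$, which matches the left-hand side. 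Comparing the two expressions completes the argument.

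The computation uses only elementary manipulation of exponents of $q$ and does not even require the hypothesis $q^4 \neq 1$, so there is no genuine obstacle. The only point to watch is the consistent bookkeeping of the exponents $d-2i\pm 1$ and $2i-d$, so that the two cross terms in the expanded product collapse to exactly $-(q+q^{-1})$ after multiplication by $q^{2i-d}a^{-1}$; a quick check at $a = q^{d-2i+1}$ (where both sides should vanish) confirms that the bookkeeping is correct.
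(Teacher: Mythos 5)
Your proposal is correct and follows exactly the paper's argument, which proves the lemma by direct substitution of the formula (\ref{eq:th}) for $\theta_i$ and elementary simplification of the exponents of $q$. Nothing further is needed.
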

\begin{proof} Use
(\ref{eq:th}).
\end{proof}

\begin{lemma}
\label{lem:thetaqq} Assume
that $\lbrace \theta_i \rbrace_{i=0}^d$ are mutually
distinct. Then 
the following {\rm (i)--(iii)} hold.
\begin{enumerate}
\item[\rm (i)] Assume $a=q^{d+1}$. Then
$\theta_0 =q+q^{-1} $.
\item[\rm (ii)] Assume 
$a=q^{-d-1}$. Then
$\theta_d =q+q^{-1}$.
\item[\rm (iii)] Assume 
$a\not=q^{d+1}$ and
$a\not=q^{-d-1}$. Then
$\theta_i \not=q+q^{-1} $ for $0 \leq i \leq d$.
\end{enumerate}
\end{lemma}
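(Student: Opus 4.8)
The plan is to read everything off Lemma~\ref{lem:qqth}, which factors
$\theta_i - q - q^{-1}$ as $(a-q^{d-2i+1})(a-q^{d-2i-1})q^{2i-d}a^{-1}$. Since the factor
$q^{2i-d}a^{-1}$ is nonzero, for each $i$ with $0\le i\le d$ we have
$\theta_i = q+q^{-1}$ if and only if $a = q^{d-2i+1}$ or $a = q^{d-2i-1}$. The whole lemma is
then a matter of extracting the appropriate instances of this criterion.

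For part~(i), specialize the criterion to $i=0$: the assumption $a=q^{d+1}$ reads
$a=q^{d-2\cdot 0+1}$, which forces $\theta_0 = q+q^{-1}$. For part~(ii), specialize to $i=d$:
the assumption $a=q^{-d-1}$ reads $a=q^{d-2d-1}$, which forces $\theta_d = q+q^{-1}$.

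For part~(iii) I would argue by contraposition, i.e.\ show that if $\theta_i = q+q^{-1}$ for some
$i$ with $0\le i\le d$, then $a=q^{d+1}$ or $a=q^{-d-1}$. By the criterion, $\theta_i=q+q^{-1}$
gives $a=q^{d-2i+1}$ or $a=q^{d-2i-1}$, hence $a^2 = q^{2d-4i+2}$ or $a^2=q^{2d-4i-2}$. Now bring
in the distinctness hypothesis through Lemma~\ref{lem2}(ii), which says that $a^2$ is not among
$q^{2d-2}, q^{2d-4}, \ldots, q^{2-2d}$. In the first case, if $1\le i\le d$ then the exponent
$2d-4i+2$ is an even integer lying between $2-2d$ and $2d-2$, so $a^2$ would be one of the forbidden
powers of $q$, a contradiction; thus $i=0$ and $a=q^{d+1}$. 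In the second case, if $0\le i\le d-1$
then $2d-4i-2$ is an even integer in the same range, again a contradiction; thus $i=d$ and
$a=q^{-d-1}$. This is exactly the contrapositive of~(iii).

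The only point demanding a little care is the index bookkeeping at the end: verifying that
$2d-4i+2$ for $1\le i\le d$, and $2d-4i-2$ for $0\le i\le d-1$, really do fall inside the window of
exponents excluded by Lemma~\ref{lem2}(ii). This is routine, since all these exponents are even and
the extreme values are $2-2d$ and $2d-2$, so I do not anticipate any genuine obstacle; the content
of the lemma is entirely carried by the factorization in Lemma~\ref{lem:qqth}.
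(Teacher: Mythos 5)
Your proof is correct and follows exactly the route the paper intends: its proof of this lemma is simply ``Use Lemmas \ref{lem2}, \ref{lem:qqth},'' i.e.\ read $\theta_i-q-q^{-1}$ off the factorization and rule out the intermediate indices via the condition that $a^2$ avoids $q^{2d-2},\ldots,q^{2-2d}$. Your write-up just supplies the index bookkeeping that the paper leaves implicit.
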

\begin{proof} Use Lemmas
\ref{lem2},
\ref{lem:qqth}.
\end{proof}

\noindent
Replacing the sequence $\lbrace \theta_i \rbrace_{i=0}^d$
by its inversion
$\lbrace \theta_{d-i} \rbrace_{i=0}^d$ has the following effect. 
\begin{lemma} 
\label{lem:inv}
We have
\begin{eqnarray*}
\theta_{d-i} = a^{-1} q^{2i-d} + a q^{d-2i}
\qquad \qquad (0 \leq i \leq d).
\end{eqnarray*}
\end{lemma}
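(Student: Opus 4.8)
The plan is to obtain Lemma~\ref{lem:inv} by a direct substitution into the defining formula (\ref{eq:th}). Replacing $i$ by $d-i$ in (\ref{eq:th}) gives
\begin{eqnarray*}
\theta_{d-i} = a q^{2(d-i)-d} + a^{-1} q^{d-2(d-i)} = a q^{d-2i} + a^{-1} q^{2i-d},
\end{eqnarray*}
and reordering the two summands yields the assertion. So there is no genuine obstacle here; the computation is immediate, and the only thing to be careful about is the sign bookkeeping in the exponents, namely $2(d-i)-d = d-2i$ and $d-2(d-i) = 2i-d$.

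It is worth recording the conceptual content that makes the identity useful later. The formula shows that the inverted eigenvalue sequence $\lbrace \theta_{d-i}\rbrace_{i=0}^d$ has exactly the same shape as $\lbrace \theta_i\rbrace_{i=0}^d$, with the parameter $a$ replaced by $a^{-1}$ (and $q$ unchanged). Consequently every fact established above for sequences of the form (\ref{eq:th}) — the mutual-distinctness criterion of Lemma~\ref{lem2}, the recursions of Lemma~\ref{lem:eachprof} and Corollary~\ref{lem4}, and the analysis of when $q+q^{-1}$ occurs among the $\theta_i$ in Lemmas~\ref{lem:qqth} and~\ref{lem:thetaqq} — transfers verbatim to the inverted sequence under the substitution $a \mapsto a^{-1}$. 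This is precisely the observation that will let us pass between a Leonard triple and the relatives obtained by reversing an eigenvalue ordering without repeating the eigenvalue computations.
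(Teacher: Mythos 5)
Your computation is correct and is exactly the paper's argument: the author's proof of this lemma is simply "Use (\ref{eq:th})," i.e.\ the same substitution $i \mapsto d-i$ in the defining formula. The extra remark about the inverted sequence having the form (\ref{eq:th}) with $a$ replaced by $a^{-1}$ is also the intended point of the lemma, so nothing needs to change.
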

\begin{proof} Use
(\ref{eq:th}).
\end{proof}

\section{Preliminaries about linear algebra}

\noindent We will be discussing algebras. An algebra is meant to 
be associative and have a multiplicative identity 1.
A subalgebra has the same 1 as the parent
algebra. Pick an integer $d\geq 0$,
and let $V$ denote a vector space over $\mathbb F$
with dimension  $d+1$. Let
${\rm End}(V)$ denote the $\mathbb F$-algebra consisting of
the $\mathbb F$-linear maps from $V$ to $V$.
Let $I \in
{\rm End}(V)$ denote the identity map.
For $A \in {\rm End}(V)$ let $\langle A \rangle$
denote the $\mathbb F$-subalgebra of 
${\rm End}(V)$ generated by $A$.
The element $A$ is called 
{\it diagonalizable} whenever $V$ is spanned by the eigenspaces of $A$.
The element $A$ is called {\it multiplicity-free}
whenever $A$ is diagonalizable, and each eigenspace of $A$
has dimension 1. 
Assume $A$ is multiplicity-free,
and let $\lbrace V_i\rbrace_{i=0}^d$ denote an ordering
of the eigenspaces of $A$. For $0 \leq i \leq d$
let $\theta_i$ denote the eigenvalue of $A$ for $V_i$.
Note that $\lbrace \theta_i \rbrace_{i=0}^d$ are mutually
distinct and contained in $\mathbb F$.
For $0 \leq i \leq d$ define
$E_i \in {\rm End}(V)$ such that
$(E_i - I)V_i = 0 $ and
$E_i V_j=0$ if $j\not=i$ $(0 \leq j \leq d)$.
We call $E_i$ the {\it primitive idempotent} of $A$ for
$V_i$ (or 
$\theta_i$).
We have (i) $E_i E_j = \delta_{i,j} E_i$ $(0 \leq i,j\leq d)$;
(ii)  $I = \sum_{i=0}^d E_i$;
(iii)   $AE_i = \theta_i E_i = E_i A$ $(0 \leq i \leq d)$;
(iv) $A = \sum_{i=0}^d \theta_i E_i$;
(v) $V_i = E_iV$ $(0 \leq i \leq d)$;
(vi) ${\rm tr}(E_i) = 1$ $(0 \leq i \leq d)$,
where tr means trace.
Moreover
\begin{eqnarray}
\label{eq:ei}
  E_i=\prod_{\stackrel{0 \leq j \leq d}{j \neq i}}
            \frac{A-\theta_j I}{\theta_i-\theta_j}
	    \qquad \qquad (0 \leq i \leq d).
\end{eqnarray}
Consider the $\mathbb F$-subalgebra $\langle A \rangle $
of ${\rm End}(V)$ generated by $A$. Then
$\lbrace A^i \rbrace_{i=0}^d$
is a basis for the $\mathbb F$-vector space
$\langle A \rangle$,
and
$0 = \prod_{0 \leq i \leq d} (A-\theta_i I)$. 
Moreover 
$\lbrace E_i \rbrace_{i=0}^d$ is a basis for the $\mathbb F$-vector
space 
$\langle A \rangle$. Here is another basis for the 
$\mathbb F$-vector space 
$\langle A \rangle$:
\begin{eqnarray}
\label{eq:taubasis}
(A-\theta_0 I) 
(A-\theta_1 I) 
\cdots
(A-\theta_{i-1} I)
\qquad \qquad (0 \leq i \leq d).
\end{eqnarray}
In this basis the primitive idempotents look as follows.
By 
(\ref{eq:ei}) and 
\cite[Lemma~5.4]{nom2006} we have
%\begin{eqnarray}
%\label{eq:Eitau}
%E_i = \frac{
%(A-\theta_0 I) \cdots 
%(A-\theta_{i-1} I)
%}
%{
%(\theta_i - \theta_0) \cdots
%(\theta_i - \theta_{i-1})}
%\sum_{j=i}^d 
%\frac{
%(A-\theta_i I) \cdots 
%(A-\theta_{j-1} I)
%}
%{
%(\theta_i - \theta_{i+1}) \cdots
%(\theta_i - \theta_{j})}
%\end{eqnarray}

\begin{eqnarray}
\label{eq:Eitau}
E_i = \sum_{j=i}^d 
\frac{
(A-\theta_0 I)
(A-\theta_1 I)
\cdots 
(A-\theta_{j-1} I)
}
{
(\theta_i - \theta_0) \cdots
(\theta_i - \theta_{i-1})
(\theta_i - \theta_{i+1}) \cdots
(\theta_i - \theta_{j})}
\end{eqnarray}
for $0 \leq i \leq d$.
Note that for $X \in {\rm End}(V)$ the following
are equivalent:
(i) $X \in 
\langle A \rangle$;
(ii) $AX=XA$; 
(iii) $E_i X E_j = 0 $ if $i \not=j$ $(0 \leq i,j\leq d)$;
(iv) $X = \sum_{i=0}^d E_i X E_i$.
For 
$X \in {\rm End}(V)$  
we have $E_iXE_i = {\rm tr}(XE_i)E_i$
 $(0 \leq i \leq d)$.
Let ${\rm Mat}_{d+1}(\mathbb F)$ denote the 
$\mathbb F$-algebra consisting
of
the $d+1$ by $d+1$ matrices that have all entries in $\mathbb F$.
We index the rows and columns by $0,1,\ldots, d$.
Let $\lbrace v_i \rbrace_{i=0}^d$ denote a basis for $V$.
For $A \in {\rm End}(V)$ and 
$X \in 
{\rm Mat}_{d+1}(\mathbb F)$, we say that $X$ {\it represents $A$ with
respect to 
 $\lbrace v_i \rbrace_{i=0}^d$}
whenever $Av_j = \sum_{i=0}^d X_{ij} v_i$ for $0 \leq j \leq d$.
A matrix  
$X \in 
{\rm Mat}_{d+1}(\mathbb F)$ is called
 {\it tridiagonal}
whenever each nonzero entry is on the diagonal, the subdiagonal,
or the superdiagonal. Assume $X$ is tridiagonal. Then $X$
is called {\it irreducible} whenever each entry on the
subdiagonal is nonzero, and each entry on the superdiagonal is
nonzero.

\section{Leonard triples}

\noindent In this section we recall the definition and basic facts
about Leonard triples.

\begin{definition} 
\label{def:lt}
\rm (See \cite[Definition~1.2]{mlt}.) 
Let $V$ denote a vector space over $\mathbb F$ with
finite positive dimension. By a {\it Leonard triple on $V$}
we mean a 3-tuple $A,B,C$ of elements in
${\rm End}(V)$ such that 
\begin{enumerate}
\item[\rm (i)] 
there exists a basis for $V$ with respect to which the
matrix representing $A$ is diagonal and the
matrices representing $B$ and $C$ are irreducible tridiagonal;
\item[\rm (ii)] 
there exists a basis for $V$ with respect to which the
matrix representing $B$ is diagonal and the
matrices representing $C$ and $A$ are irreducible tridiagonal;
\item[\rm (iii)] 
there exists a basis for $V$ with respect to which the
matrix representing $C$ is diagonal and the
matrices representing $A$ and $B$ are irreducible tridiagonal.
\end{enumerate}
We say that the Leonard triple $A,B,C$ is {\it over} $\mathbb F$.
By the {\it diameter} of $A,B,C$ we mean
$d:={\rm dim}(V)-1$.
\end{definition}

\begin{lemma} Let $A,B,C$ denote a Leonard triple on $V$.
Then each permutation of $A,B,C$ is a Leonard triple on $V$.
Moreover, any two of $A,B,C$ form a Leonard pair on $V$.
\end{lemma}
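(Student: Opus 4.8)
The plan is to exploit the symmetry built into Definition~\ref{def:lt}. First I would observe that conditions (i)--(iii) of that definition, taken together, are manifestly invariant under the action of the symmetric group on the set $\{A,B,C\}$: each of the three conditions singles out one of the three elements to be represented by a diagonal matrix, with the remaining two represented by irreducible tridiagonal matrices, and the order in which those two tridiagonal elements are listed is immaterial, since ``irreducible tridiagonal'' is a property of a single matrix. Concretely, given a permutation $(X,Y,Z)$ of $(A,B,C)$, I would check that condition (i) for $(X,Y,Z)$, namely the existence of a basis making $X$ diagonal and $Y,Z$ irreducible tridiagonal, is literally one of the three conditions (i)--(iii) holding for $A,B,C$; likewise for conditions (ii) and (iii) of $(X,Y,Z)$. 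Since all three conditions of Definition~\ref{def:lt} hold for $A,B,C$, all three hold for $(X,Y,Z)$, so $(X,Y,Z)$ is again a Leonard triple on $V$.

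For the second assertion, I would fix one of the three pairs, say $A,B$, and recall from \cite[Definition~1.1]{2lintrans} that a Leonard pair is an ordered pair of maps in ${\rm End}(V)$ such that there is a basis of $V$ making the first map diagonal and the second irreducible tridiagonal, and a basis making the second map diagonal and the first irreducible tridiagonal. Condition (i) of Definition~\ref{def:lt} supplies a basis with respect to which $A$ is diagonal and $B$ is irreducible tridiagonal, the additional fact that $C$ is irreducible tridiagonal in this basis being extra information we discard. Condition (ii) supplies a basis with respect to which $B$ is diagonal and $A$ is irreducible tridiagonal. Together these are exactly what is required for $A,B$ to form a Leonard pair on $V$. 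The pairs $B,C$ and $C,A$ are treated the same way using the appropriate two of conditions (i)--(iii); alternatively one may invoke the first part of the lemma to reduce each of these to the case $A,B$ already handled.

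There is essentially no obstacle here. The only point deserving a word of care is the remark that the order in which $B$ and $C$ (or $A$ and $C$, etc.) are listed within a given condition of Definition~\ref{def:lt} is irrelevant, which is precisely what makes the symmetry under all permutations of $A,B,C$ transparent and reduces both claims to unpacking the definitions.
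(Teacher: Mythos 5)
Your argument is correct, and it is exactly the intended one: the paper states this lemma without proof precisely because it follows by unpacking Definition~\ref{def:lt} together with the definition of a Leonard pair, which is what you do. The observation that the ordering of the two tridiagonal elements within each condition is irrelevant, and that any two of the three conditions supply the two bases required for a Leonard pair, is all that is needed.
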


\begin{lemma} 
\label{lem:2gen}
{\rm 
(See \cite[Corollary~3.2]{ter24}.)}
 Let $A,B,C$ denote a Leonard triple on $V$. Then
any two of $A,B,C$ generate the
$\mathbb F$-algebra 
${\rm End}(V)$.
\end{lemma}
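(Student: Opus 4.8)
The statement asks to show that any two maps among $A,B,C$ in a Leonard triple generate ${\rm End}(V)$. Since each permutation of a Leonard triple is again a Leonard triple (by the preceding lemma), and since by definition any two of $A,B,C$ form a Leonard pair, it suffices to prove the corresponding statement for an arbitrary Leonard pair: \emph{if $A,B$ is a Leonard pair on $V$, then $A$ and $B$ generate ${\rm End}(V)$.} The reference $\cite[Corollary~3.2]{ter24}$ is cited, so one approach is simply to invoke that; but to give a self-contained argument I would proceed as follows.

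First I would fix a basis $\lbrace v_i\rbrace_{i=0}^d$ of $V$ with respect to which $A$ is diagonal with (mutually distinct) eigenvalues $\theta_0,\dots,\theta_d$ and $B$ is irreducible tridiagonal. Let $\mathcal{A}$ denote the subalgebra of ${\rm End}(V)$ generated by $A$ and $B$. Since $A$ is multiplicity-free, its primitive idempotents $E_0,\dots,E_d$ lie in $\langle A\rangle\subseteq\mathcal{A}$ (each $E_i$ is a polynomial in $A$ by (\ref{eq:ei})). In the chosen basis, $E_i$ is the diagonal matrix unit with a $1$ in position $(i,i)$. The key computation is then to show that all matrix units $e_{ij}$ (with a single $1$ in position $(i,j)$) lie in $\mathcal{A}$: because $B$ is irreducible tridiagonal, $E_{i}BE_{i+1}$ and $E_{i+1}BE_{i}$ are nonzero scalar multiples of $e_{i,i+1}$ and $e_{i+1,i}$ respectively, so all the "adjacent" matrix units $e_{i,i\pm1}$ lie in $\mathcal{A}$; multiplying these together along a path from column $i$ to column $j$ produces a nonzero scalar multiple of $e_{ij}$ for every $i,j$. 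Hence $\mathcal{A}$ contains a basis of ${\rm End}(V)\cong{\rm Mat}_{d+1}(\mathbb F)$, so $\mathcal{A}={\rm End}(V)$.

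The main point requiring care — and the only place where the Leonard pair hypothesis is genuinely used beyond the tridiagonality of $B$ — is actually not hard here: it is just the observation that "irreducible" forces the super- and sub-diagonal entries of $B$ to be nonzero, which is exactly what makes the path-multiplication argument go through. (If one instead wanted to mimic $\cite[Corollary~3.2]{ter24}$ verbatim, one would phrase it via the action of $\mathcal{A}$ on $V$: show $V$ is an irreducible $\mathcal{A}$-module and that $\mathcal{A}$ is commutant-trivial, then apply the Jacobson density theorem / Burnside's theorem. The matrix-unit argument above is really the concrete incarnation of that.) Finally, having proved the statement for Leonard pairs, I would conclude: given a Leonard triple $A,B,C$, any two of them — say $A$ and $B$ — form a Leonard pair, hence generate ${\rm End}(V)$, and likewise for the other two choices of pair. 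This completes the proof.

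I expect no serious obstacle; the only thing to be slightly careful about is making sure the "path from $i$ to $j$" argument is stated cleanly (one traverses $e_{i,i+1}e_{i+1,i+2}\cdots$ or $e_{i,i-1}e_{i-1,i-2}\cdots$ as appropriate, each factor a nonzero scalar times a tridiagonal matrix unit extracted from $B$ via sandwiching by $E$'s), and that one records that $E_i X E_j = 0$ unless $X$ has a nonzero $(i,j)$-entry, so that the extracted pieces are genuinely the single matrix units and nothing more.
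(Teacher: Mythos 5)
Your proof is correct. The paper itself gives no argument for this lemma; it simply cites \cite[Corollary~3.2]{ter24}, and your reduction to the Leonard pair case followed by the matrix-unit computation ($E_i\in\langle A\rangle$ since $A$ is multiplicity-free, $E_iBE_j=B_{ij}e_{ij}\neq 0$ for $|i-j|=1$ by irreducibility, then multiplying adjacent units along a monotone path) is exactly the standard self-contained proof underlying that citation. The only point worth stating more carefully is your parenthetical "(mutually distinct)": the definition of a Leonard pair only asserts that $A$ is represented by a diagonal matrix, and the distinctness of its eigenvalues (equivalently, multiplicity-freeness, needed so that each $E_i$ is a polynomial in $A$ and is a rank-one idempotent) is itself a result, namely \cite[Lemma~1.3]{2lintrans}, which the present paper invokes in Section 4; you should cite it or note the short argument (diagonalizable plus non-derogatory, the latter because $A$ is irreducible tridiagonal in the other basis) rather than assert it. With that reference added, the argument is complete.
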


%%%%%%%%%%%%%%%%%%
%\noindent We use the following notational convention.
%
%\begin{definition}\rm
%Let $A,B,C$ denote a Leonard triple on $V$. For each object $f$
%that we associate with $A,B,C$ then $f'$ (resp. $f''$)
%denotes the corresponding object for the Leonard triple
%$B,C,A$ (resp. $C,A,B$).
%\end{definition}
%%%%%%%%%%%%%%%%%

\begin{definition}\rm (See \cite[Definition~8.2]{mlt}.) 
 Let $A,B,C$ denote a Leonard triple on $V$.
Let $\cal V$ denote a vector space over $\mathbb F$ with finite
positive dimension, and let
  $\cal A, \cal B, \cal C$ denote a Leonard triple on $ \cal V$.
An {\it isomorphism of Leonard triples from
$A,B,C$ to $\cal A, \cal B, \cal C$} is an $\mathbb F$-linear bijection
$\sigma : V \to \cal V$ such that
${\cal A} \sigma = \sigma A$ and
${\cal B} \sigma = \sigma B$ and
${\cal C} \sigma = \sigma C$.
The Leonard triples 
$A,B,C$ and
$\cal A, \cal B, \cal C$ are called {\it isomorphic} whenever
there exists an isomorphism of Leonard triples from
$A,B,C$ to
$\cal A, \cal B, \cal C$.
\end{definition}

\noindent For the rest of this section
let $A,B,C$ denote a Leonard triple on $V$, as in Definition
\ref{def:lt}. By \cite[Lemma~1.3]{2lintrans} 
each of $A,B,C$ is multiplicity-free.
Let $\lbrace \theta_i \rbrace_{i=0}^d$ 
denote an ordering of the eigenvalues of $A$.
For $0 \leq i \leq d$ let $0 \not=v_i \in V$ denote an eigenvector of $A$ for 
$\theta_i$. 
Note that the sequence 
$\lbrace v_i \rbrace_{i=0}^d$  is a basis for $V$.
The ordering 
$\lbrace \theta_i \rbrace_{i=0}^d$  is called {\it standard}
whenever 
the basis $\lbrace v_i \rbrace_{i=0}^d$  
satisfies
Definition
\ref{def:lt}(i). Assume that
the ordering $\lbrace \theta_i \rbrace_{i=0}^d$  is standard.
Then by 
\cite[Lemma~2.4]{TD00}
the ordering 
 $\lbrace \theta_{d-i} \rbrace_{i=0}^d$  is also standard and
 no further ordering is standard. Similar comments apply to
 $B$ and $C$.
For the rest of this section let
$\lbrace \theta_i \rbrace_{i=0}^d$ 
(resp. $\lbrace \theta'_i \rbrace_{i=0}^d$) 
(resp. $\lbrace \theta''_i \rbrace_{i=0}^d$) 
denote a standard ordering of the eigenvalues for $A$
(resp. $B$) 
(resp. $C$). 
Let 
$\lbrace E_i \rbrace_{i=0}^d$ 
(resp. $\lbrace E'_i \rbrace_{i=0}^d$) 
(resp. $\lbrace E''_i \rbrace_{i=0}^d$) 
denote the corresponding orderings of their
primitive idempotents.

\begin{lemma}
\label{lem:tripleprod}
{\rm (See 
\cite[Lemma~3.3]{vidter}.)}
For $0 \leq i,j\leq d$ the products 
\begin{eqnarray*}
&&
E_i B E_j, \qquad \quad E'_i C E'_j, \qquad \quad E''_i A E''_j,
\\
&&
E_i C E_j, \qquad \quad E'_i A E'_j, \qquad \quad E''_i B E''_j
\end{eqnarray*}
are zero if 
 $|i-j|>1$ and nonzero if
 $|i-j|=1$.
\end{lemma}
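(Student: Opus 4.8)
The plan is to read off each of the six products directly from the defining tridiagonal matrix representations. Fix the basis: by Definition \ref{def:lt}(i), and because $\lbrace \theta_i\rbrace_{i=0}^d$ is standard, there is a basis $\lbrace v_i\rbrace_{i=0}^d$ of $V$ with $Av_i=\theta_i v_i$ for $0\le i\le d$, relative to which both $B$ and $C$ are represented by irreducible tridiagonal matrices; write $(B_{k\ell})$ and $(C_{k\ell})$ for these matrices. Since $A$ is multiplicity-free, $E_i v_k=\delta_{ik}v_i$, so for any $X\in{\rm End}(V)$ with matrix $(X_{k\ell})$ relative to $\lbrace v_i\rbrace_{i=0}^d$ the composition $E_iXE_j$ sends $v_j\mapsto X_{ij}v_i$ and annihilates $v_k$ for $k\neq j$. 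Hence $E_iXE_j=0$ if and only if $X_{ij}=0$.

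Now apply this with $X=B$ and with $X=C$. Tridiagonality gives $B_{ij}=C_{ij}=0$ whenever $|i-j|>1$, so $E_iBE_j=0$ and $E_iCE_j=0$ in that range; irreducibility gives $B_{ij}\neq 0$ and $C_{ij}\neq 0$ whenever $|i-j|=1$, so $E_iBE_j\neq 0$ and $E_iCE_j\neq 0$ there. This disposes of $E_iBE_j$ and $E_iCE_j$. The remaining four products are handled identically with the roles of $A,B,C$ rotated: for $E'_iCE'_j$ and $E'_iAE'_j$ invoke Definition \ref{def:lt}(ii) together with the standardness of $\lbrace\theta'_i\rbrace_{i=0}^d$ to get an eigenbasis of $B$ in which $C$ and $A$ are irreducible tridiagonal; for $E''_iAE''_j$ and $E''_iBE''_j$ invoke Definition \ref{def:lt}(iii) and the standardness of $\lbrace\theta''_i\rbrace_{i=0}^d$ likewise.

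There is no serious obstacle; the only point requiring a word of care is the legitimacy of the fixed basis. A priori ``standard'' is defined via one choice of eigenbasis satisfying Definition \ref{def:lt}(i), but any other eigenbasis realizing the same eigenvalue order is obtained by rescaling each $v_i$ by a nonzero scalar, which rescales the rows and columns of every representing matrix by nonzero scalars and therefore does not change whether a given entry vanishes. Thus standardness of $\lbrace\theta_i\rbrace_{i=0}^d$ does guarantee a basis of the required form, and the entry-extraction identity for $E_iXE_j$ then yields the claim at once.
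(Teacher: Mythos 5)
Your argument is correct, and it is essentially the standard proof of this fact: the paper itself offers no proof but defers to \cite[Lemma~3.3]{vidter}, where the claim for a Leonard pair is established by exactly this device of reading off the $(i,j)$-entry of the irreducible tridiagonal matrix via $E_iXE_j v_j = X_{ij}v_i$, applied in each of the three eigenbases guaranteed by Definition \ref{def:lt} and the standardness of the eigenvalue orderings. Your closing remark on the scaling-independence of the representing basis is a correct (if minor) point of care and does not affect the argument.
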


\begin{lemma}
\label{lem:eigreq}
{\rm (See \cite[Theorem~1.9]{2lintrans}.)} 
The scalars
\begin{eqnarray*}
\frac{\theta_{i-2}-\theta_{i+1}}{\theta_{i-1}-\theta_i},
\qquad \quad
\frac{\theta'_{i-2}-\theta'_{i+1}}{\theta'_{i-1}-\theta'_i},
\qquad \quad 
\frac{\theta''_{i-2}-\theta''_{i+1}}{\theta''_{i-1}-\theta''_i}
\end{eqnarray*}
are equal and independent of $i$ for $2 \leq  i \leq d-1$.
\end{lemma}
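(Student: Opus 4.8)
**Proof proposal for Lemma \ref{lem:eigreq} (the statement that the three displayed expressions are equal and independent of $i$).**

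The plan is to reduce the triple statement to the known Leonard-pair fact and then exploit the structure of a Leonard triple. First I would recall from the classification of Leonard pairs, specifically \cite[Theorem~1.9]{2lintrans} applied to a single Leonard pair, that for any Leonard pair with standard eigenvalue orderings $\lbrace \mu_i \rbrace_{i=0}^d$ and $\lbrace \nu_i \rbrace_{i=0}^d$, the scalar $(\mu_{i-2}-\mu_{i+1})/(\mu_{i-1}-\mu_i)$ is independent of $i$ for $2 \le i \le d-1$, is equal to the corresponding scalar built from $\lbrace \nu_i \rbrace_{i=0}^d$, and in fact depends only on the isomorphism class of the Leonard pair — denote this common value by $\beta + 1$ (the Askey-Wilson parameter $\beta$). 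Since $A,B,C$ is a Leonard triple, each of the pairs $(A,B)$, $(B,C)$, $(C,A)$ is a Leonard pair on $V$ (by the Lemma following Definition \ref{def:lt}), and the chosen orderings $\lbrace \theta_i \rbrace$, $\lbrace \theta'_i \rbrace$, $\lbrace \theta''_i \rbrace$ are standard for $A$, $B$, $C$ respectively.

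The key step is to show that all three pairs yield the \emph{same} $\beta$. For the pair $(A,B)$: the ordering $\lbrace \theta_i \rbrace$ is standard for $A$ and $\lbrace \theta'_i \rbrace$ is standard for $B$, but I must check these two orderings are "compatible" in the sense required by \cite[Theorem~1.9]{2lintrans}, i.e.\ that there is a common basis realizing Definition \ref{def:lt}(i) for $A$ and Definition \ref{def:lt}(ii) for $B$ simultaneously. This is where Lemma \ref{lem:tripleprod} does the work: it tells us $E_i B E_j = 0$ when $|i-j|>1$ and is nonzero when $|i-j|=1$, which is precisely the tridiagonal/irreducible-tridiagonal relation forcing $(A,B)$ with orderings $\lbrace \theta_i \rbrace$, $\lbrace \theta'_i \rbrace$ to be a Leonard pair in standard form. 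Hence $(\theta_{i-2}-\theta_{i+1})/(\theta_{i-1}-\theta_i)$ and $(\theta'_{i-2}-\theta'_{i+1})/(\theta'_{i-1}-\theta'_i)$ are both equal to $\beta_{AB}+1$, the Askey-Wilson $\beta$ of the Leonard pair $(A,B)$. Running the identical argument with the pair $(B,C)$ and orderings $\lbrace \theta'_i \rbrace$, $\lbrace \theta''_i \rbrace$ (using the second line of Lemma \ref{lem:tripleprod}), we get $(\theta'_{i-2}-\theta'_{i+1})/(\theta'_{i-1}-\theta'_i) = \beta_{BC}+1$ as well. But the left-hand side here is built entirely from the eigenvalues of $B$, so $\beta_{AB} = \beta_{BC}$; similarly using $(C,A)$ gives $\beta_{BC} = \beta_{CA}$. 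Therefore all three displayed expressions equal this one common $\beta+1$, independent of $i$ for $2 \le i \le d-1$.

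I expect the main obstacle to be the bookkeeping in the previous paragraph: carefully verifying that Lemma \ref{lem:tripleprod} really does certify that $(A,B)$ (with the given standard orderings, not merely up to reordering) is a Leonard pair presented in the form to which \cite[Theorem~1.9]{2lintrans} directly applies, so that the $\beta$ one extracts is genuinely a function of only $A$ and only $B$ separately. Once that identification is pinned down, the "glueing" of the three $\beta$'s is immediate because each $\beta$ has two descriptions, one in terms of each member of its pair. An alternative, perhaps cleaner, route would be to cite \cite[Lemma~3.3]{vidter} and the surrounding structure of \cite{ter24} to note directly that any Leonard triple has a single associated $\beta$; but the argument via Lemma \ref{lem:tripleprod} is self-contained given only what is stated in the excerpt.
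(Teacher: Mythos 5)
Your proposal is correct and amounts to exactly what the paper intends: the paper offers no proof beyond the citation of \cite[Theorem~1.9]{2lintrans}, and your argument simply unwinds that citation by applying it to the Leonard pairs $(A,B)$ and $(B,C)$ (whose standard orderings for the pairs are, by Definition \ref{def:lt}, the same as the triple's standard orderings) and gluing through the middle expression in the $\theta'_i$. The compatibility worry you raise is already settled by the definition of standard ordering for the triple, so Lemma \ref{lem:tripleprod} is not strictly needed, but invoking it does no harm.
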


\noindent In the next section we will describe a family of Leonard triples,
said to have $q$-Racah type. Roughly speaking, this family
corresponds to 
the ``most general'' solution to the constraints in
Lemma
\ref{lem:eigreq}.
\medskip

\noindent We now formally define the
 modular Leonard triples. We will use the
following concept. By an {\it antiautomorphism} of
${\rm End}(V)$  we mean an $\mathbb F$-linear bijection
$\dagger: 
{\rm End}(V)\to
{\rm End}(V)$  such that
$(XY)^\dagger = Y^\dagger X^\dagger$ for all 
$X,Y \in {\rm End}(V)$.

\begin{definition}
\label{def:mlt}
\rm (See \cite[Definition~1.4]{mlt}.)
The Leonard triple $A,B,C$ is called {\it modular}
whenever for each element among $A,B,C$ there exists an antiautomorphism
of ${\rm End}(V)$ that fixes that element and 
swaps the other two elements of the triple.
\end{definition}

\section{Leonard triples of $q$-Racah type}

\noindent
In this section we describe a family of Leonard triples, said
to have $q$-Racah type.
For the rest of this paper the following notation is in effect.
Fix an integer $d\geq 0$.
%Let $V$ denote a vector space over
%$\mathbb F$ with dimension $d+1$.
Fix a nonzero $q \in \mathbb F$ such that
$q^4 \not=1$.   Fix nonzero $a,b,c \in \mathbb F$.
For the rest of this paper the following assumption is in effect.
\begin{assumption}
\label{lem:nonz}
We assume:
\begin{enumerate}
\item[\rm (i)] $q^{2i}\not=1$ for $1 \leq i \leq d$;
\item[\rm (ii)] none of $a^2, b^2, c^2$ is among
$q^{2d-2}, q^{2d-4}, \ldots, q^{2-2d}$;
\item[\rm (iii)] none of $abc, a^{-1}bc, ab^{-1}c, abc^{-1}$
is among $q^{d-1}, q^{d-3}, \ldots, q^{1-d}$.
\end{enumerate}
\end{assumption}

\noindent In a moment we will describe a Leonard triple $A,B,C$ over
$\mathbb F$
such that
\begin{eqnarray}
&&A+\frac{q BC-q^{-1} CB}{q^2-q^{-2}} =\alpha_a I,
\label{eq:A}
\\
&&B+\frac{q CA-q^{-1} AC}{q^2-q^{-2}} =\alpha_b I,
\label{eq:B}
\\
&&C+\frac{q AB-q^{-1} BA}{q^2-q^{-2}} =\alpha_c I
\label{eq:C}
\end{eqnarray}
where
\begin{eqnarray}
&&
\alpha_a = \frac{(a+a^{-1})(q^{d+1}+q^{-d-1})+ (b+b^{-1})(c+c^{-1})}{q+q^{-1}},
\label{eq:AA}
\\
&&
\alpha_b = \frac{(b+b^{-1})(q^{d+1}+q^{-d-1})+ (c+c^{-1})(a+a^{-1})}{q+q^{-1}},
\label{eq:BB}
\\
&&
\alpha_c=\frac{(c+c^{-1})(q^{d+1}+q^{-d-1})+ (a+a^{-1})(b+b^{-1})}{q+q^{-1}}
\label{eq:CC}
\end{eqnarray}
and $d$ is the diameter.

\begin{definition}
\label{lem:eigvalform}
\rm
For $0 \leq i \leq d$ define
\begin{eqnarray}
\theta_i = a q^{2i-d}+a^{-1} q^{d-2i},
\qquad 
\theta'_i = bq^{2i-d}+b^{-1} q^{d-2i},
\qquad 
\theta''_i = cq^{2i-d}+c^{-1} q^{d-2i}.
\label{eq:eigvalform}
\end{eqnarray}
\end{definition}
\begin{lemma} We have
\begin{eqnarray*}
\theta_i \not=\theta_j,\qquad \quad
\theta'_i \not=\theta'_j,\qquad \quad
\theta''_i \not=\theta''_j \qquad \quad \mbox{if} \quad i \not=j,
\qquad \qquad (0 \leq i,j\leq d).
\end{eqnarray*}
\end{lemma}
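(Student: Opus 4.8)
The plan is to reduce the claim $\theta_i \neq \theta_j$ for $i \neq j$ to the criterion already established in Section~2. Recall that the $\theta_i$ in Definition~\ref{lem:eigvalform} have exactly the form \eqref{eq:th} studied there (with the same $a$, $q$, and $d$), and likewise the $\theta'_i$ and $\theta''_i$ have that form with $a$ replaced by $b$ and $c$ respectively. So Lemma~\ref{lem2} applies verbatim: the scalars $\lbrace \theta_i \rbrace_{i=0}^d$ are mutually distinct if and only if (i) $q^{2i} \neq 1$ for $1 \leq i \leq d$, and (ii) $a^2$ is not among $q^{2d-2}, q^{2d-4}, \ldots, q^{2-2d}$.

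First I would invoke Assumption~\ref{lem:nonz}(i), which is precisely condition (i) of Lemma~\ref{lem2}. Next I would invoke Assumption~\ref{lem:nonz}(ii), which asserts that none of $a^2$, $b^2$, $c^2$ lies among $q^{2d-2}, q^{2d-4}, \ldots, q^{2-2d}$; in particular $a^2$ avoids that list, so condition (ii) of Lemma~\ref{lem2} holds for $a$. Therefore Lemma~\ref{lem2} gives that $\lbrace \theta_i \rbrace_{i=0}^d$ are mutually distinct. Running the identical argument with $b$ in place of $a$ (using Assumption~\ref{lem:nonz}(ii) for $b^2$) yields that $\lbrace \theta'_i \rbrace_{i=0}^d$ are mutually distinct, and with $c$ in place of $a$ yields the same for $\lbrace \theta''_i \rbrace_{i=0}^d$. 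Assembling these three conclusions gives the stated inequalities.

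There is really no obstacle here — the statement is a direct bookkeeping consequence of Assumption~\ref{lem:nonz}(i),(ii) and Lemma~\ref{lem2}. The only thing to be careful about is matching notation: one must observe that the three eigenvalue families in Definition~\ref{lem:eigvalform} are genuinely instances of the family \eqref{eq:th}, so that Lemma~\ref{lem2} (which was proved for that family) is applicable after the substitutions $a \mapsto a, b, c$. Note also that Assumption~\ref{lem:nonz}(iii) plays no role in this particular lemma; it will be needed later to guarantee other nondegeneracy properties of the Leonard triple. One could alternatively argue directly from Lemma~\ref{lem1}: writing $\theta_i - \theta_j = (1 - q^{2j-2i})(aq^{2i-d} - a^{-1}q^{d-2j})$, the first factor is nonzero by Assumption~\ref{lem:nonz}(i) since $0 < |i-j| \leq d$, and the second factor is nonzero precisely because $a^2 \neq q^{2d-2i-2j}$, an exponent lying in $\{q^{2d-2}, \ldots, q^{2-2d}\}$, excluded by Assumption~\ref{lem:nonz}(ii); but routing through Lemma~\ref{lem2} is cleaner.
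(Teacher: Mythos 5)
Your proof is correct and matches the paper's own argument, which simply invokes Lemma \ref{lem2} together with Assumption \ref{lem:nonz} (applied with $a$, $b$, $c$ in turn). Your extra remark about arguing directly from Lemma \ref{lem1} is fine but unnecessary, since Lemma \ref{lem2} already encapsulates that computation.
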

\begin{proof} By Lemma
\ref{lem2} and
Assumption
\ref{lem:nonz}.
\end{proof}

\begin{definition}
\label{def:qRAC}
\rm
A Leonard triple $A,B,C$ over $\mathbb F$
is said to have {\it $q$-Racah type with Huang data $(a,b,c,d)$}
whenever the following {\rm (i), (ii)} hold:
\begin{enumerate}
\item[\rm (i)] the sequence 
 $\lbrace \theta_i \rbrace_{i=0}^d$ 
(resp. $\lbrace \theta'_i \rbrace_{i=0}^d$ )
(resp. $\lbrace \theta''_i \rbrace_{i=0}^d$ )
is a standard ordering of the eigenvalues for 
$A$ (resp. $B$) (resp. $C$);
\item[\rm (ii)] 
$A,B,C$ satisfy
{\rm (\ref{eq:A})--(\ref{eq:CC})}.
\end{enumerate}
\end{definition}

\begin{note}\rm The above definition of
$q$-Racah type is slightly different from
the one given in
\cite[Definition~12.1]{huang1}. We make the adjustment
in order to allow $d\leq 2$.
For $d\geq 3$ the two definitions are equivalent by
\cite[Theorem~16.4]{huang1}.
\end{note}

\begin{definition}
\rm
For $1 \leq i \leq d$ define
\begin{eqnarray*}
&&
\varphi_i = a^{-1} b^{-1} q^{d+1}
(q^i - q^{-i})(q^{i-d-1}-q^{d-i+1})
(q^{-i} - abc q^{i-d-1})(q^{-i} - abc^{-1} q^{i-d-1}),
\\
&&
\phi_i = a b^{-1} q^{d+1}
(q^i - q^{-i})(q^{i-d-1}-q^{d-i+1})
(q^{-i} - a^{-1}bc q^{i-d-1})(q^{-i} - a^{-1} bc^{-1} q^{i-d-1}).
\end{eqnarray*}
%By  Lemma
%\ref{lem:nonz}
%the scalars $\varphi_i, \phi_i$ are nonzero for $1 \leq i \leq d$.
\end{definition}

\begin{lemma}
\label{lem:phinonzero}
The scalars $\varphi_i, \phi_i$ are nonzero for $1 \leq i \leq d$.
\end{lemma}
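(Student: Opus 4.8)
The plan is to show that each factor in the definitions of $\varphi_i$ and $\phi_i$ is nonzero, using Assumption~\ref{lem:nonz}. Write $\varphi_i$ as a product of five scalars: $a^{-1}b^{-1}q^{d+1}$, $(q^i-q^{-i})$, $(q^{i-d-1}-q^{d-i+1})$, $(q^{-i}-abc\,q^{i-d-1})$, and $(q^{-i}-abc^{-1}q^{i-d-1})$; similarly for $\phi_i$ with $a^{-1}$ replaced by $a$ in the leading coefficient and $abc$, $abc^{-1}$ replaced by $a^{-1}bc$, $a^{-1}bc^{-1}$ inside the last two factors. The first factor is nonzero since $a,b,q$ are nonzero. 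For the second factor, $q^i-q^{-i}=q^{-i}(q^{2i}-1)$, which is nonzero by Assumption~\ref{lem:nonz}(i) for $1\le i\le d$.

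For the third factor, rewrite $q^{i-d-1}-q^{d-i+1}=q^{i-d-1}(1-q^{2(d-i+1)})=q^{i-d-1}(1-q^{2(d+1-i)})$; since $1\le i\le d$ we have $1\le d+1-i\le d$, so $q^{2(d+1-i)}\ne 1$ again by Assumption~\ref{lem:nonz}(i), and the factor is nonzero. For the fourth factor of $\varphi_i$, write $q^{-i}-abc\,q^{i-d-1}=q^{i-d-1}(q^{d+1-2i}-abc)$; the exponent $d+1-2i$ ranges over $d-1,d-3,\dots,1-d$ as $i$ runs over $1,\dots,d$, so this is nonzero precisely because $abc$ is not among $q^{d-1},q^{d-3},\dots,q^{1-d}$, which is Assumption~\ref{lem:nonz}(iii). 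The same reasoning applies to the fifth factor with $abc$ replaced by $abc^{-1}$, and to the corresponding factors of $\phi_i$ with $abc$ replaced by $a^{-1}bc$ and $abc^{-1}$ replaced by $a^{-1}bc^{-1}$, all covered by the list $abc,a^{-1}bc,ab^{-1}c,abc^{-1}$ in Assumption~\ref{lem:nonz}(iii) (noting $a^{-1}bc^{-1}=(ab^{-1}c)^{-1}$, so it avoids $q^{d-1},\dots,q^{1-d}$ iff $ab^{-1}c$ does, since that set is closed under inversion). Since $\varphi_i$ and $\phi_i$ are each products of nonzero scalars, they are nonzero.

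I do not expect a genuine obstacle here — the only mild subtlety is bookkeeping: matching each of the four "$abc$-type" expressions appearing inside $\varphi_i,\phi_i$ (namely $abc$, $abc^{-1}$, $a^{-1}bc$, $a^{-1}bc^{-1}$) against the four listed in Assumption~\ref{lem:nonz}(iii) (namely $abc$, $a^{-1}bc$, $ab^{-1}c$, $abc^{-1}$), which requires the observation that the geometric set $\{q^{d-1},q^{d-3},\dots,q^{1-d}\}$ is invariant under $x\mapsto x^{-1}$, so that $a^{-1}bc^{-1}$ being excluded from it is equivalent to $ab^{-1}c$ being excluded. Once that is noted, the proof is a direct factor-by-factor verification.
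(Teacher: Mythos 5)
Your proof is correct and follows the same route as the paper, whose proof is simply the one-line citation of Assumption \ref{lem:nonz}; you have just written out the factor-by-factor verification, including the correct observation that $a^{-1}bc^{-1}=(ab^{-1}c)^{-1}$ and that the set $q^{d-1},q^{d-3},\ldots,q^{1-d}$ is closed under inversion.
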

\begin{proof} By Assumption
\ref{lem:nonz}.
\end{proof}

\begin{proposition}
\label{lem:LTexist}
%%%{\rm (See \cite[Theorem~14.5]{huang1}.)}
{\rm (See \cite[Section~16]{huang1}.)}
There exists a Leonard triple $A,B,C$ over $\mathbb F$
that has $q$-Racah type and Huang data
$(a,b,c,d)$.
Up to isomorphism this Leonard
triple 
is uniquely determined by $q$ and $(a,b,c,d)$.
In one basis $A,B$ are represented by
    \begin{eqnarray*}
         &&
	 \left(
	    \begin{array}{cccccc}
	      \theta_0 &  & & & & {\bf 0}  \\
	       1 & \theta_1 & &    & &   \\
		 & 1 & \theta_2  & &  &
		  \\
		  &&\cdot & \cdot &&
		    \\
		    & & & \cdot  & \cdot & \\
		     {\bf 0}  & &  & & 1 & \theta_d
		      \end{array}
		       \right),
\qquad \qquad
	 \left(
	    \begin{array}{cccccc}
	      \theta'_0 & \varphi_1  & & & & {\bf 0}  \\
	        & \theta'_1 & \varphi_2 &    & &   \\
		 &  & \theta'_2  & \cdot &  &
		  \\
		  && & \cdot & \cdot &
		    \\
		    & & &   &  \cdot  & \varphi_d \\
		     {\bf 0}  & &  & &  & \theta'_d
		      \end{array}
		       \right)
\end{eqnarray*}
respectively.
\noindent In another basis 
$A,B$ are  represented by
    \begin{eqnarray*}
         &&
	 \left(
	    \begin{array}{cccccc}
	      \theta_d &  & & & & {\bf 0}  \\
	       1 & \cdot & &    & &   \\
		 & \cdot & \cdot  & &  &
		  \\
		  && \cdot & \theta_2 &&
		    \\
		    & & &  1 & \theta_1 & \\
		     {\bf 0}  & &  & & 1 & \theta_0
		      \end{array}
		       \right),
\qquad \qquad
	 \left(
	    \begin{array}{cccccc}
	      \theta'_0 & \phi_1  & & & & {\bf 0}  \\
	        & \theta'_1 & \phi_2 &    & &   \\
		 &  & \theta'_2  & \cdot &  &
		  \\
		  && & \cdot & \cdot &
		    \\
		    & & &   &  \cdot  & \phi_d \\
		     {\bf 0}  & &  & &  & \theta'_d
		      \end{array}
		       \right)
\end{eqnarray*}
respectively. In either basis the matrix representing $C$ is 
found using  
{\rm (\ref{eq:C})}.
\end{proposition}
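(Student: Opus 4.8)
The plan is to establish existence and uniqueness of the Leonard triple by a combination of the Leonard pair classification and a direct matrix verification. First I would invoke the classification of Leonard pairs \cite[Theorem~1.9]{2lintrans}: the pair $A,B$ with the prescribed eigenvalue sequences $\lbrace\theta_i\rbrace$, $\lbrace\theta'_i\rbrace$ and split sequence encoded by the $\varphi_i$ is exactly a Leonard pair of $q$-Racah type in the sense of \cite{2lintrans}, provided the parameter array is feasible. Assumption~\ref{lem:nonz} together with Lemma~\ref{lem:phinonzero} is precisely what guarantees feasibility (the $\theta_i$, $\theta'_i$ are mutually distinct by the preceding lemma, the $\varphi_i$ are nonzero, and the common eigenvalue ratio of Lemma~\ref{lem:eigreq} takes the $q$-Racah value $q^2+q^{-2}+1$). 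So the pair $A,B$ exists and is unique up to isomorphism, and in the stated basis it is represented by the displayed bidiagonal/tridiagonal matrices; the second basis is obtained by applying the ``$D_4$ action'' / inversion of the pair, using Lemma~\ref{lem:inv} to see that the inverted eigenvalue data still has $q$-Racah form with the split sequence now given by the $\phi_i$.

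Next I would define $C$ by equation~(\ref{eq:C}), i.e. $C = \alpha_c I - (qAB - q^{-1}BA)/(q^2-q^{-2})$, and verify the three defining properties of a Leonard triple for $A,B,C$. The key point is that $A,B$ already generate ${\rm End}(V)$ by Lemma~\ref{lem:2gen}, so $C$ is a well-defined element; what must be checked is (a) that $C$ is multiplicity-free with eigenvalue sequence $\lbrace\theta''_i\rbrace$, and (b) the tridiagonality conditions of Definition~\ref{def:lt}. For (b), in the basis diagonalizing $A$ (first display), $B$ is lower-bidiagonal-plus-diagonal... more precisely $A$ is diagonal and $B$ is irreducible tridiagonal; one computes the matrix of $AB-BA$ in this basis and checks it is irreducible tridiagonal, hence so is $C$. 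The symmetric statements for the other two bases follow from the cyclic symmetry of the relations (\ref{eq:A})--(\ref{eq:C}) together with the fact, to be verified, that (\ref{eq:A}) and (\ref{eq:B}) also hold once $C$ is defined by (\ref{eq:C}); this in turn is a finite computation using the explicit matrices and the definitions (\ref{eq:AA})--(\ref{eq:CC}) of the $\alpha$'s.

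For uniqueness up to isomorphism, suppose $A,B,C$ and $\mathcal A,\mathcal B,\mathcal C$ both have $q$-Racah type with the same Huang data. Then in particular $A,B$ and $\mathcal A,\mathcal B$ are Leonard pairs with identical parameter arrays (same eigenvalues in standard order, and the split sequence is forced by (\ref{eq:A})--(\ref{eq:C}) and the Huang data), so by \cite[Theorem~1.9]{2lintrans} there is an isomorphism of Leonard pairs $\sigma$ from $A,B$ to $\mathcal A,\mathcal B$. Since $C$ is a fixed polynomial expression in $A,B$ (namely (\ref{eq:C})) and likewise $\mathcal C$ in $\mathcal A,\mathcal B$, the map $\sigma$ automatically intertwines $C$ and $\mathcal C$, hence is an isomorphism of Leonard triples.

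The main obstacle I anticipate is verifying that the single defining relation (\ref{eq:C}) for $C$ forces the other two relations (\ref{eq:A}), (\ref{eq:B}) and, simultaneously, that $C$ has the correct spectrum $\lbrace\theta''_i\rbrace$ with the correct tridiagonal shape in each of the three bases; this is the substantive computational heart, and it is where the precise form of $\alpha_a,\alpha_b,\alpha_c$ and of $\varphi_i,\phi_i$ must conspire. In practice this is exactly the content cited as \cite[Section~16]{huang1}, so in the write-up I would carry out the verification in the $A$-diagonal basis directly from the displayed matrices (a bounded tridiagonal computation), deduce the $C$-relation and the shape of $C$ there, and then appeal to the symmetry of the setup — or to the corresponding bases for $B$ and $C$ obtained from the Leonard pair structure — to finish the remaining two cases, citing Huang for the parts of the bookkeeping that are purely routine.
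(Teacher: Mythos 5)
Your route is genuinely different from the paper's: the paper's proof is essentially a citation, for $d\geq 3$ and $\mathbb F$ algebraically closed, to \cite[Theorem~16.4]{huang1} and the discussion below \cite[Definition~16.2]{huang1}, followed by the remark that one checks the result survives removing those two hypotheses. Your plan — build the Leonard pair $A,B$ from the parameter array via \cite[Theorem~1.9]{2lintrans}, define $C$ by (\ref{eq:C}), and verify the triple axioms directly — would, if completed, have the real advantage of treating $d\leq 2$ and arbitrary $\mathbb F$ uniformly, with no separate ``one checks'' step. But as written it is a plan with the hard parts deferred, not a proof.

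Concretely: (i) the third condition of Definition \ref{def:lt} (a basis in which $C$ is diagonal and $A,B$ are irreducible tridiagonal), together with the fact that $\lbrace\theta''_i\rbrace_{i=0}^d$ is a standard ordering of the eigenvalues of $C$, is never established. Your appeal to ``the cyclic symmetry of the relations (\ref{eq:A})--(\ref{eq:C})'' is circular at this stage, because the symmetry among $A,B,C$ is only available after one knows that all three relations hold and that $C$ is multiplicity-free with the prescribed spectrum — which is exactly what is being deferred; the in-basis computation you describe (using Lemma \ref{lem:eachprof} to see the off-diagonal entries are nonzero) only yields conditions (i) and (ii) of Definition \ref{def:lt}. (ii) The claim that (\ref{eq:C}) ``forces'' (\ref{eq:A}) and (\ref{eq:B}) is a substantive identity tying the specific values (\ref{eq:AA})--(\ref{eq:CC}) to the $\varphi_i,\phi_i$; it is not routine bookkeeping but the core of \cite[Section~16]{huang1} (equivalently, the $\mathbb Z_3$-symmetric Askey--Wilson relations). (iii) In the uniqueness argument, the assertion that the split sequence $\lbrace\varphi_i\rbrace$ is ``forced by (\ref{eq:A})--(\ref{eq:C}) and the Huang data'' is unproven: the two eigenvalue sequences alone do not determine a Leonard pair up to isomorphism, and extracting the $\varphi_i$ from the relations is again the nontrivial step (the rest of your uniqueness argument, that a Leonard-pair isomorphism automatically intertwines $C$ and $\mathcal C$ since both are the same polynomial in the pair, is fine). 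So either carry out these verifications explicitly, or cite \cite{huang1} for them — in which case you are back to the paper's proof and still owe the reduction from the hypotheses $d\geq 3$ and $\mathbb F$ algebraically closed.
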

\begin{proof} First assume that $d\geq 3$ and $\mathbb F$ is
algebraically closed. Then the result follows from
 \cite[Theorem~16.4]{huang1}
and the discussion below \cite[Definition~16.2]{huang1}. 
One checks that the result remains valid if the two assumptions are
removed.
\end{proof}

\begin{note}
\label{note:HuangData}
\rm It can happen that two distinct Huang data sequences
correspond to
 isomorphic Leonard triples of $q$-Racah type. 
 In other words, a given Leonard triple of $q$-Racah type
can have more than one Huang data sequence. 
Assume the Leonard triple $A,B,C$ has $q$-Racah type and
Huang data $(a,b,c,d)$.
Then by \cite[Lemma~4.8]{huang1}, each of 
$(a^{\pm 1},b^{\pm 1},c^{\pm 1},d)$ is a Huang data for
$A,B,C$ and $A,B,C$ has no other Huang data.
\end{note}

\begin{definition}\rm Referring to the Leonard triple
$A,B,C$ in Definition
\ref{def:qRAC},
for $0 \leq i \leq d$ let 
$E_i$ (resp. $E'_i$)
(resp. $E''_i$)
denote the primitive idempotent of $A$ (resp. $B$)
(resp. $C$)
for $\theta_i$
(resp. $\theta'_i$)
(resp. $\theta''_i$).
\end{definition}

\noindent We mention some properties for Leonard triples of
$q$-Racah type.

\begin{lemma} 
\label{lem:sym} 
Assume the Leonard triple
$A,B,C$ has $q$-Racah type, with Huang data $(a,b,c,d)$.
Then the Leonard triple $B,C,A$ has 
$q$-Racah type, with Huang data $(b,c,a,d)$.
Moreover the Leonard triple $B,A,C$ has $q^{-1}$-Racah type,
with Huang data 
$(b,a,c,d)$.
\end{lemma}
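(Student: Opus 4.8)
The plan is to verify directly that the 3-tuple $B,C,A$ (resp. $B,A,C$) meets the two conditions of Definition \ref{def:qRAC} for the appropriate Huang data. For the first assertion: since $A,B,C$ is a Leonard triple, so is each of its permutations, in particular $B,C,A$. The eigenvalue sequences for $B,C,A$ in that order are $\{\theta'_i\},\{\theta''_i\},\{\theta_i\}$, which by \eqref{eq:eigvalform} are exactly the sequences attached to Huang data $(b,c,a,d)$ via the relabelling $a\mapsto b$, $b\mapsto c$, $c\mapsto a$; and the standardness of an ordering is a property of that ordering relative to the triple (Definition \ref{def:lt}(i) and the remarks after it), hence is inherited under permutation. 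So Definition \ref{def:qRAC}(i) holds for $B,C,A$ with Huang data $(b,c,a,d)$. For Definition \ref{def:qRAC}(ii) one observes that the relations \eqref{eq:A}--\eqref{eq:C} are cyclically symmetric in $(A,B,C)$: replacing $(A,B,C)$ by $(B,C,A)$ sends \eqref{eq:A} to \eqref{eq:B}, \eqref{eq:B} to \eqref{eq:C}, and \eqref{eq:C} to \eqref{eq:A}, while the scalars \eqref{eq:AA}--\eqref{eq:CC} undergo the matching cyclic relabelling $(a,b,c)\mapsto(b,c,a)$; indeed $\alpha_b$ is obtained from $\alpha_a$ by that substitution, and similarly for the others. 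Hence $B,C,A$ satisfies \eqref{eq:A}--\eqref{eq:CC} with parameters $(b,c,a,d)$, and the first claim follows.

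For the second assertion I would argue similarly, now tracking what the transposition $(A,B,C)\mapsto(B,A,C)$ does together with $q\mapsto q^{-1}$. The eigenvalue sequence attached to $B$ in $q$-Racah type with Huang data $(b,a,c,d)$ at parameter $q^{-1}$ is, by \eqref{eq:eigvalform}, $b q^{-(2i-d)}+b^{-1}q^{2i-d}=\theta'_{d-i}$, i.e.\ the inversion of $\{\theta'_i\}$; by the remarks after Definition \ref{def:lt}, the inverted ordering of a standard ordering is again standard, so Definition \ref{def:qRAC}(i) is met. For the relations, note that in \eqref{eq:A} the combination $\tfrac{qBC-q^{-1}CB}{q^2-q^{-2}}$ is skew under simultaneously swapping $B\leftrightarrow C$ and $q\leftrightarrow q^{-1}$ in the sense that it is carried to $\tfrac{q^{-1}CB-qBC}{q^{-2}-q^{2}}=\tfrac{qBC-q^{-1}CB}{q^2-q^{-2}}$ — it is in fact invariant — so under $(A,B,C,q)\mapsto(B,A,C,q^{-1})$ the relation \eqref{eq:C} is carried to itself, and \eqref{eq:A}$\leftrightarrow$\eqref{eq:B} are interchanged; correspondingly the scalars \eqref{eq:AA}--\eqref{eq:CC} are symmetric under $q\mapsto q^{-1}$ (they depend on $q$ only through $q+q^{-1}$ and $q^{d+1}+q^{-d-1}$) and under $a\leftrightarrow b$ one has $\alpha_a\leftrightarrow\alpha_b$ with $\alpha_c$ fixed. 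This yields \eqref{eq:A}--\eqref{eq:CC} for $B,A,C$ at parameter $q^{-1}$ with Huang data $(b,a,c,d)$, as required.

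The only genuinely fiddly point — and the one I would present with care — is the bookkeeping for \eqref{eq:C}: one must check that the specific asymmetry $qAB-q^{-1}BA$ there, under $A\leftrightarrow B$ together with $q\leftrightarrow q^{-1}$, returns $qAB-q^{-1}BA$ (up to the simultaneous sign flip in the denominator $q^2-q^{-2}\mapsto q^{-2}-q^2$), so that \eqref{eq:C} really is fixed rather than sign-changed; the sign flips in numerator and denominator cancel. Everything else is a matter of matching the cyclic or transposition relabelling of $(a,b,c)$ against the explicit formulas \eqref{eq:AA}--\eqref{eq:CC}, which is routine. I expect no conceptual obstacle: the lemma is really the statement that the defining data of $q$-Racah type transform under $S_3$ on $(A,B,C)$ in the evident way, with the odd permutations additionally inverting $q$.
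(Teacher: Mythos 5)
Your proof is correct and is essentially the paper's argument: the paper's own proof simply cites (\ref{eq:A})--(\ref{eq:CC}) and Definition \ref{def:qRAC}, leaving precisely the relabelling and $q\mapsto q^{-1}$ bookkeeping you carry out (including the inversion of the standard eigenvalue orderings for the transposed triple) to the reader. No gaps; your write-up just makes the routine verification explicit.
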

\begin{proof} By (\ref{eq:A})--(\ref{eq:CC}) and Definition
\ref{def:qRAC}.
\end{proof}

\noindent 
Recall the notation 
\begin{eqnarray*}
\lbrack n \rbrack_q = \frac{q^n-q^{-n}}{q-q^{-1}} \qquad \qquad n \in \mathbb Z.
\end{eqnarray*}

\begin{lemma}
\label{lem:traceABC}
Assume the Leonard triple
$A,B,C$ has $q$-Racah type, with Huang data $(a,b,c,d)$.
Then the traces of $A,B,C$ are as follows.
\begin{eqnarray*}
{\rm tr} (A) = (a+a^{-1})\lbrack d+1 \rbrack_q,
\qquad
{\rm tr} (B) = (b+b^{-1})\lbrack d+1 \rbrack_q,
\qquad
{\rm tr} (C) = (c+c^{-1})\lbrack d+1 \rbrack_q.
\end{eqnarray*}
\end{lemma}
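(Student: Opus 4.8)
The plan is to compute the trace of $A$ directly from the eigenvalue data in Definition \ref{lem:eigvalform}, and then obtain the traces of $B$ and $C$ by the symmetry of Lemma \ref{lem:sym}. Since $A$ is multiplicity-free with eigenvalues $\lbrace \theta_i \rbrace_{i=0}^d$, we have ${\rm tr}(A) = \sum_{i=0}^d \theta_i$, so it suffices to evaluate this sum. First I would substitute $\theta_i = a q^{2i-d} + a^{-1} q^{d-2i}$ from (\ref{eq:eigvalform}), giving
\begin{eqnarray*}
{\rm tr}(A) = a \sum_{i=0}^d q^{2i-d} + a^{-1} \sum_{i=0}^d q^{d-2i}.
\end{eqnarray*}
Each of these is a geometric-type sum. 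Reindexing the second sum by $i \mapsto d-i$ shows the two sums are equal, so ${\rm tr}(A) = (a+a^{-1}) \sum_{i=0}^d q^{2i-d}$.

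Next I would identify $\sum_{i=0}^d q^{2i-d}$ with $\lbrack d+1 \rbrack_q$. Writing the sum as $q^{-d}(1 + q^2 + q^4 + \cdots + q^{2d}) = q^{-d} \cdot \frac{q^{2d+2}-1}{q^2-1}$ (valid since $q^2 \neq 1$ by Assumption \ref{lem:nonz}(i) applied with $d \geq 1$; the case $d=0$ is trivial, both sides equal $1$), one gets $\frac{q^{d+2} - q^{-d}}{q^2 - 1} = \frac{q^{d+1} - q^{-d-1}}{q - q^{-1}} = \lbrack d+1 \rbrack_q$ after multiplying numerator and denominator by $q^{-1}$. Hence ${\rm tr}(A) = (a+a^{-1}) \lbrack d+1 \rbrack_q$.

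For the traces of $B$ and $C$, I would invoke Lemma \ref{lem:sym}: since $B,C,A$ is a Leonard triple of $q$-Racah type with Huang data $(b,c,a,d)$, applying the result just established for the first element of that triple gives ${\rm tr}(B) = (b+b^{-1}) \lbrack d+1 \rbrack_q$; applying it once more to the cyclic shift $C,A,B$ with Huang data $(c,a,b,d)$ gives ${\rm tr}(C) = (c+c^{-1}) \lbrack d+1 \rbrack_q$. Alternatively one can argue symmetrically that ${\rm tr}(B) = \sum_i \theta'_i$ and ${\rm tr}(C) = \sum_i \theta''_i$ directly from the forms of $\theta'_i, \theta''_i$ in (\ref{eq:eigvalform}), which are obtained from $\theta_i$ by replacing $a$ with $b$ and $c$ respectively, so the same computation applies verbatim.

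There is no real obstacle here; the only point requiring a small amount of care is the closed-form evaluation of the geometric sum, which needs $q^2 \neq 1$ — this is guaranteed for $d \geq 1$ by Assumption \ref{lem:nonz}(i) and is vacuous for $d = 0$. I would present the computation for ${\rm tr}(A)$ in full and then dispatch ${\rm tr}(B)$ and ${\rm tr}(C)$ with one line citing Lemma \ref{lem:sym}.
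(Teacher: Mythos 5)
Your proof is correct and follows exactly the paper's approach: write ${\rm tr}(A)=\sum_{i=0}^d\theta_i$, evaluate the geometric sum using the formula for $\theta_i$ in Definition \ref{lem:eigvalform}, and handle $B$, $C$ by the same computation (the paper does this implicitly; your appeal to Lemma \ref{lem:sym} is an equally valid way to dispatch them). The only cosmetic remark is that $q^2\neq 1$ already follows for all $d$ from the standing hypothesis $q^4\neq 1$, so the separate treatment of $d=0$ is unnecessary, though harmless.
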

\begin{proof} 
Concerning $A$
we have ${\rm tr} (A) = \sum_{i=0}^d \theta_i$;
evaluate this sum using the formula for $\theta_i$ in
Definition
\ref{lem:eigvalform}.
\end{proof}

\begin{lemma}
\label{lem:AAB}
Assume that the Leonard triple 
$A,B,C$ has $q$-Racah type, with Huang data $(a,b,c,d)$.
Then
\begin{eqnarray*}
&&
A^2 B - (q^2+q^{-2})ABA+ BA^2 + (q^2-q^{-2})^2 B =  \alpha_b (q^2-q^{-2})^2 I 
- \alpha_c (q-q^{-1})(q^2-q^{-2})A,
\\
&&
B^2 C - (q^2+q^{-2})BCB+ CB^2 + (q^2-q^{-2})^2 C =  \alpha_c (q^2-q^{-2})^2 I 
- \alpha_a (q-q^{-1})(q^2-q^{-2})B,
\\
&&
C^2 A - (q^2+q^{-2})CAC+ AC^2 + (q^2-q^{-2})^2 A=  \alpha_a (q^2-q^{-2})^2 I 
- \alpha_b (q-q^{-1})(q^2-q^{-2})C
\end{eqnarray*}
and also
\begin{eqnarray*}
&&
A^2C  - (q^2+q^{-2})ACA+ CA^2 + (q^2-q^{-2})^2 C=  \alpha_c (q^2-q^{-2})^2 I 
- \alpha_b (q-q^{-1})(q^2-q^{-2})A,
\\
&&
B^2A  - (q^2+q^{-2})BAB+ AB^2 + (q^2-q^{-2})^2 A =  \alpha_a (q^2-q^{-2})^2 I 
- \alpha_c (q-q^{-1})(q^2-q^{-2})B,
\\
&&
C^2B  - (q^2+q^{-2})CBC+ BC^2 + (q^2-q^{-2})^2 B =  \alpha_b (q^2-q^{-2})^2 I 
- \alpha_a (q-q^{-1})(q^2-q^{-2})C.
\end{eqnarray*}
\end{lemma}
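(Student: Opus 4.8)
\noindent The plan is to derive the first of the six relations directly from (\ref{eq:A})--(\ref{eq:C}) by eliminating $C$, and then to obtain the remaining five from it by invoking the symmetries in Lemma \ref{lem:sym}.

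For the first relation, the key point is that (\ref{eq:C}) expresses $C$ explicitly as a polynomial of degree two in $A$ and $B$:
\begin{eqnarray*}
C = \alpha_c I - \frac{q AB - q^{-1} BA}{q^2-q^{-2}}.
\end{eqnarray*}
First I would substitute this into (\ref{eq:B}). A short expansion gives
\begin{eqnarray*}
q CA - q^{-1} AC = (q-q^{-1})\alpha_c\, A - \frac{(q^2+q^{-2}) ABA - A^2 B - B A^2}{q^2-q^{-2}},
\end{eqnarray*}
and feeding this into (\ref{eq:B}) and clearing the denominator $(q^2-q^{-2})^2$ yields, after collecting the coefficients of $I$, $A$, $A^2B$, $ABA$, and $BA^2$, exactly the first asserted relation. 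This is the only genuine computation, and it is mechanical: only the ring axioms and the three defining relations are used, and there is no delicate cancellation.

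For the second and third relations I would apply the first relation to the Leonard triples $B,C,A$ and $C,A,B$, which by Lemma \ref{lem:sym} (applied once, resp.\ twice) have $q$-Racah type with Huang data $(b,c,a,d)$ and $(c,a,b,d)$. For the last three relations I would use that, by Lemma \ref{lem:sym}, the triple $B,A,C$ has $q^{-1}$-Racah type, hence so do its cyclic reorderings, giving $q^{-1}$-Racah triples $B,A,C$, $A,C,B$, $C,B,A$ with Huang data $(b,a,c,d)$, $(a,c,b,d)$, $(c,b,a,d)$; applying the first relation to these with $q$ replaced by $q^{-1}$, and noting that each of $q^2+q^{-2}$, $(q^2-q^{-2})^2$, $(q-q^{-1})(q^2-q^{-2})$ and each of $\alpha_a,\alpha_b,\alpha_c$ in (\ref{eq:AA})--(\ref{eq:CC}) is invariant under $q \mapsto q^{-1}$, one recovers the fifth, fourth, and sixth relations in turn. (Alternatively the fourth relation can be obtained directly by the same elimination, solving (\ref{eq:B}) for $B$ and substituting into (\ref{eq:C}), after which the fifth and sixth follow from it by Lemma \ref{lem:sym} alone.) I do not anticipate a real obstacle: the proof is essentially the bookkeeping in the first computation together with careful tracking of Huang data under permutations, the one point needing care being to confirm that the ``$\alpha$ of the middle entry'' of each reordered triple is correctly identified --- for instance for $B,A,C$ it is $\alpha_a$, since (\ref{eq:AA}) is symmetric in the two outer parameters and unchanged under $q \mapsto q^{-1}$.
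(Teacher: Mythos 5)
Your proposal is correct and follows essentially the same route as the paper: the first relation is obtained by exactly the elimination the paper uses (eliminate $C$ in (\ref{eq:B}) via (\ref{eq:C})), and the paper gets the fourth by the companion elimination and the rest ``similarly.'' Your only deviation is to deduce the remaining five relations from the first via Lemma \ref{lem:sym} (cyclic shifts plus the $q\mapsto q^{-1}$ transposition), and your bookkeeping there --- the invariance of $\alpha_a,\alpha_b,\alpha_c$ and of the coefficients $q^2+q^{-2}$, $(q^2-q^{-2})^2$, $(q-q^{-1})(q^2-q^{-2})$ under $q\mapsto q^{-1}$, and the identification of the relevant $\alpha$'s for the permuted Huang data --- checks out, so the two arguments are equivalent.
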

\begin{proof} To obtain the first equation,
eliminate $C$ in
(\ref{eq:B}) using 
(\ref{eq:C}).
To obtain the fourth equation,
eliminate $B$ in
(\ref{eq:C}) using 
(\ref{eq:B}).
The remaining equations are similarly obtained.
\end{proof}

\section{Some trace formulae}

\noindent Let $V$ denote a vector space over $\mathbb F$
with dimension $d+1$.
For the rest of this paper $A,B,C$ denotes a Leonard triple
on $V$, with $q$-Racah type and
Huang data $(a,b,c,d)$.

%\begin{definition}\rm
%Define
%\begin{eqnarray*}
%a_i = {\rm tr}(AE'_i), \qquad \qquad
%{\bf a}_i = {\rm tr}(B E_i), \qquad \qquad
%(0 \leq i \leq d).
%\end{eqnarray*}
%Note the font difference.
%\end{definition}

\begin{lemma}
\label{cor:threea}
For $0 \leq i \leq d$ we have
\begin{eqnarray*}
&&
\biggl( 
{\rm tr}(BE_i)\,
\frac{q+q^{-1} + \theta_i}{q+q^{-1}}-\alpha_c
\biggr)
\biggl( 
1-\frac{\theta_i }{q+q^{-1}}
\biggr)
= \alpha_b - \alpha_c,
\\
&&
\biggl( 
{\rm tr}(CE'_i)\,
\frac{q+q^{-1} + \theta'_i}{q+q^{-1}}-\alpha_a
\biggr)
\biggl( 
1-\frac{\theta'_i }{q+q^{-1}}
\biggr)
= \alpha_c - \alpha_a,
\\
&&
\biggl( 
{\rm tr}(AE''_i)\,
\frac{q+q^{-1} + \theta''_i}{q+q^{-1}}-\alpha_b
\biggr)
\biggl( 
1-\frac{\theta''_i }{q+q^{-1}}
\biggr)
= \alpha_a - \alpha_b
\end{eqnarray*}
and also
\begin{eqnarray*}
&&
\biggl( 
{\rm tr}(CE_i)\,
\frac{q+q^{-1} + \theta_i}{q+q^{-1}}-\alpha_b
\biggr)
\biggl( 
1-\frac{\theta_i }{q+q^{-1}}
\biggr)
= \alpha_c - \alpha_b,
\\
&&
\biggl( 
{\rm tr}(AE'_i)\,
\frac{q+q^{-1} + \theta'_i}{q+q^{-1}}-\alpha_c
\biggr)
\biggl( 
1-\frac{\theta'_i }{q+q^{-1}}
\biggr)
= \alpha_a - \alpha_c,
\\
&&
\biggl( 
{\rm tr}(BE''_i)\,
\frac{q+q^{-1} + \theta''_i}{q+q^{-1}}-\alpha_a
\biggr)
\biggl( 
1-\frac{\theta''_i }{q+q^{-1}}
\biggr)
= \alpha_b - \alpha_a.
\end{eqnarray*}
\end{lemma}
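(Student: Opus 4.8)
\emph{Proof plan.} The plan is to derive each of the six identities by multiplying the corresponding relation in Lemma~\ref{lem:AAB} on the left and right by a primitive idempotent of the appropriate diagonal element and then comparing coefficients of that idempotent. I will use the following facts from Section~3: for $X\in{\rm End}(V)$ we have $E_iXE_i={\rm tr}(XE_i)E_i$, we have ${\rm tr}(E_i)=1$ (so $E_i\neq 0$), and $AE_i=\theta_iE_i=E_iA$; likewise for $B,E'_i,\theta'_i$ and for $C,E''_i,\theta''_i$. Since $q^4\neq 1$, both $q-q^{-1}$ and $q+q^{-1}$ are nonzero, which is needed in order to divide.

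First I would prove the first identity. Multiply the first equation of Lemma~\ref{lem:AAB} on the left and right by $E_i$. Using $AE_i=\theta_iE_i=E_iA$, each of the three terms $A^2B$, $ABA$, $BA^2$ contributes $\theta_i^2\,{\rm tr}(BE_i)E_i$, the term $(q^2-q^{-2})^2B$ contributes $(q^2-q^{-2})^2\,{\rm tr}(BE_i)E_i$, and the right-hand side contributes $\bigl(\alpha_b(q^2-q^{-2})^2-\alpha_c(q-q^{-1})(q^2-q^{-2})\theta_i\bigr)E_i$. Cancelling $E_i$ leaves a scalar identity whose left side is ${\rm tr}(BE_i)$ times $(2-q^2-q^{-2})\theta_i^2+(q^2-q^{-2})^2$; since $2-q^2-q^{-2}=-(q-q^{-1})^2$ and $q^2-q^{-2}=(q-q^{-1})(q+q^{-1})$, this equals $(q-q^{-1})^2\bigl((q+q^{-1})^2-\theta_i^2\bigr)$, while the right side equals $(q-q^{-1})^2(q+q^{-1})\bigl(\alpha_b(q+q^{-1})-\alpha_c\theta_i\bigr)$. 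Dividing by $(q-q^{-1})^2(q+q^{-1})^2$ and using $(q+q^{-1})^2-\theta_i^2=(q+q^{-1}+\theta_i)(q+q^{-1}-\theta_i)$ yields
\begin{eqnarray*}
{\rm tr}(BE_i)\,\frac{q+q^{-1}+\theta_i}{q+q^{-1}}\,\biggl(1-\frac{\theta_i}{q+q^{-1}}\biggr)=\alpha_b-\alpha_c\,\frac{\theta_i}{q+q^{-1}}.
\end{eqnarray*}
Subtracting $\alpha_c\bigl(1-\frac{\theta_i}{q+q^{-1}}\bigr)=\alpha_c-\alpha_c\frac{\theta_i}{q+q^{-1}}$ from both sides and factoring the left side gives exactly the first claimed identity.

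The remaining five identities are obtained in the same way from the remaining five equations of Lemma~\ref{lem:AAB}: sandwich the second equation between copies of $E'_i$ (using $BE'_i=\theta'_iE'_i$), the third between copies of $E''_i$ (using $CE''_i=\theta''_iE''_i$), the fourth between copies of $E_i$, the fifth between copies of $E'_i$, and the sixth between copies of $E''_i$. In each case the identical simplification produces the stated formula, with $\alpha_b,\alpha_c,\theta_i$ replaced by the appropriate triple of parameters.

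I do not expect a genuine obstacle here; the computation is routine once one notes the factorization $2-q^2-q^{-2}=-(q-q^{-1})^2$ and that $q^4\neq 1$ forces $q-q^{-1}$ and $q+q^{-1}$ to be nonzero. The only thing that requires care is the bookkeeping: matching each of the six relations of Lemma~\ref{lem:AAB} with the correct primitive idempotent, and reading off the correct pair among $\alpha_a,\alpha_b,\alpha_c$ on the right-hand side.
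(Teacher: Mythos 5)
Your proof is correct and follows essentially the same route as the paper: sandwich each equation of Lemma~\ref{lem:AAB} between the appropriate primitive idempotent, reduce to the scalar identity ${\rm tr}(BE_i)\,\frac{(q+q^{-1}-\theta_i)(q+q^{-1}+\theta_i)}{q+q^{-1}}=\alpha_b(q+q^{-1})-\alpha_c\theta_i$ (the paper extracts the scalar by taking traces, you by cancelling the nonzero $E_i$ via $E_iXE_i={\rm tr}(XE_i)E_i$ -- an immaterial difference), and then rearrange. Your matching of the six equations with $E_i$, $E'_i$, $E''_i$ and the corresponding $\alpha$'s is also correct.
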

\begin{proof} In the first equation of
Lemma
\ref{lem:AAB}, multiply each term on the left and
right by $E_i$, and simplify using
$E_iA=\theta_i E_i = AE_i$.
In the resulting equation take the trace of each side,
 and simplify using
 ${\rm tr}(XY)= 
 {\rm tr}(YX)$ along with $E^2_i = E_i$ and ${\rm tr}(E_i)=1$. 
This yields 
\begin{eqnarray}
&&
{\rm tr}(BE_i)\,
\frac{(q+q^{-1}-\theta_i)(q+q^{-1}+\theta_i)}{q+q^{-1}}
= \alpha_b (q+q^{-1})-\alpha_c \theta_i.
\label{eq:prelim}
\end{eqnarray}
Rearranging the terms in
(\ref{eq:prelim}) we obtain the first equation
 of the present lemma.
The remaining equations of the present lemma are similarly
obtained.
\end{proof}

\begin{lemma}
\label{lem:alphadif}
We have
\begin{eqnarray}
&&\alpha_a - \alpha_b = 
 \frac{(b-a)(b-a^{-1})(c-q^{d+1})(c-q^{-d-1})b^{-1} c^{-1}}{q+q^{-1}},
\label{eq:diff1} 
 \\
&&\alpha_b - \alpha_c = 
 \frac{(c-b)(c-b^{-1})(a-q^{d+1})(a-q^{-d-1})c^{-1} a^{-1}}{q+q^{-1}},
\label{eq:diff2} 
 \\
&&\alpha_c - \alpha_a = 
 \frac{(a-c)(a-c^{-1})(b-q^{d+1})(b-q^{-d-1})a^{-1} 
 b^{-1}}{q+q^{-1}}.
\label{eq:diff3} 
 \end{eqnarray}
 \end{lemma}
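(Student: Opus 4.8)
The plan is to prove the first identity (\ref{eq:diff1}) by a direct computation from the definitions (\ref{eq:AA})--(\ref{eq:CC}) of $\alpha_a,\alpha_b,\alpha_c$, and then to deduce (\ref{eq:diff2}) and (\ref{eq:diff3}) from it by the cyclic symmetry $a\to b\to c\to a$. First I would subtract the formula for $\alpha_b$ from the formula for $\alpha_a$. Over the common denominator $q+q^{-1}$, the terms carrying the factor $q^{d+1}+q^{-d-1}$ contribute $(a+a^{-1}-b-b^{-1})(q^{d+1}+q^{-d-1})$, while the two cross terms contribute $(b+b^{-1})(c+c^{-1})-(c+c^{-1})(a+a^{-1})=-(c+c^{-1})(a+a^{-1}-b-b^{-1})$. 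Hence the common factor $a+a^{-1}-b-b^{-1}$ pulls out and
\begin{eqnarray*}
\alpha_a-\alpha_b=\frac{(a+a^{-1}-b-b^{-1})\bigl(q^{d+1}+q^{-d-1}-c-c^{-1}\bigr)}{q+q^{-1}}.
\end{eqnarray*}

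Next I would factor each of the two numerator factors using the elementary identity $x+x^{-1}-y-y^{-1}=(y-x)(y-x^{-1})y^{-1}$, valid for nonzero $x,y$ (one checks $(y-x)(y-x^{-1})y^{-1}=y+y^{-1}-x-x^{-1}$ by expanding). Applying this with $(x,y)=(a,b)$ gives $a+a^{-1}-b-b^{-1}=-(b-a)(b-a^{-1})b^{-1}$, and applying it with $(x,y)=(c,q^{d+1})$ gives, after a minus sign, $q^{d+1}+q^{-d-1}-c-c^{-1}=-(c-q^{d+1})(c-q^{-d-1})c^{-1}$. Substituting both factorizations into the displayed formula, the two minus signs cancel and one obtains precisely the right-hand side of (\ref{eq:diff1}).

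Finally, for (\ref{eq:diff2}) and (\ref{eq:diff3}) I would observe that the substitution $a\to b$, $b\to c$, $c\to a$ sends $\alpha_a\mapsto\alpha_b$, $\alpha_b\mapsto\alpha_c$, $\alpha_c\mapsto\alpha_a$ (immediate from (\ref{eq:AA})--(\ref{eq:CC})), and simultaneously carries the right-hand side of (\ref{eq:diff1}) to that of (\ref{eq:diff2}); applying the substitution a second time yields (\ref{eq:diff3}). The argument involves no real obstacle: the only point requiring care is bookkeeping the two sign flips in the factorization step so that they cancel. (As an alternative to the elementary factorization of the $c$-factor one could invoke Lemma \ref{lem:qqth} after matching the relevant $q+q^{-1}$-type shift, but the direct factorization is the most transparent route.)
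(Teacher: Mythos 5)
Your proposal is correct, and it is essentially the paper's argument: the paper's proof is simply a direct verification from the definitions (\ref{eq:AA})--(\ref{eq:CC}), which is exactly what your computation (factoring out $a+a^{-1}-b-b^{-1}$ and $q^{d+1}+q^{-d-1}-c-c^{-1}$, then using $(y-x)(y-x^{-1})y^{-1}=y+y^{-1}-x-x^{-1}$) carries out, with the cyclic substitution handling the remaining two identities.
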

\begin{proof}
Use  
(\ref{eq:AA})--(\ref{eq:CC}).
\end{proof}

\section{The element $W$}

We continue to discuss the Leonard triple $A,B,C$ on $V$
with $q$-Racah type and Huang data $(a,b,c,d)$.
Our next general goal is to find all the
invertible $W  \in {\rm End}(V)$
such that $A$ commutes with $W$ and $W^{-1}BW-C$.
We first record some results from linear algebra.
\begin{lemma} 
\label{lem:WA}
For $W \in {\rm End}(V)$ the following are equivalent:
\begin{enumerate}
\item[\rm (i)] $W$ commutes with $A$;
\item[\rm (ii)] there exist 
scalars $\lbrace t_i \rbrace_{i=0}^d$ in
$\mathbb F$ such that
\begin{eqnarray}
W = \sum_{i=0}^d   t_i E_i.
\label{eq:We} 
\end{eqnarray}
\end{enumerate}
\end{lemma}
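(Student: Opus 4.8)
This is essentially a bookkeeping consequence of the facts about multiplicity-free elements collected in Section 3, applied to $A$ (recall that $A$ is multiplicity-free, being part of a Leonard triple). The plan is to verify each implication directly from the idempotent calculus.

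For (ii) $\Rightarrow$ (i), suppose $W=\sum_{i=0}^d t_i E_i$ for some scalars $t_i\in\mathbb F$. Using $AE_i=\theta_i E_i = E_i A$ for $0\le i\le d$, compute
\begin{eqnarray*}
AW = \sum_{i=0}^d t_i A E_i = \sum_{i=0}^d t_i \theta_i E_i = \sum_{i=0}^d t_i E_i A = WA,
\end{eqnarray*}
so $W$ commutes with $A$. (Equivalently, one may simply note that each $E_i$ lies in $\langle A\rangle$, hence so does $W$, and every element of $\langle A\rangle$ commutes with $A$.)

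For (i) $\Rightarrow$ (ii), suppose $AW=WA$. By the list of equivalent conditions in Section 3 (the equivalence of $AX=XA$ with $X=\sum_{i=0}^d E_i X E_i$), we have $W=\sum_{i=0}^d E_i W E_i$. Again by Section 3, $E_i W E_i = {\rm tr}(WE_i)\,E_i$ for $0\le i\le d$. Setting $t_i={\rm tr}(WE_i)$ gives $W=\sum_{i=0}^d t_i E_i$, as desired. Alternatively, invoke that $\{E_i\}_{i=0}^d$ is a basis for the $\mathbb F$-vector space $\langle A\rangle$ and that $W\in\langle A\rangle$, so $W$ is an $\mathbb F$-linear combination of the $E_i$.

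I do not anticipate any genuine obstacle here; the only thing to be careful about is citing the correct one of the four equivalent characterizations of $\langle A\rangle$ from Section 3 and recording, for later use, the explicit formula $t_i={\rm tr}(WE_i)$, which will presumably be convenient when $W$ is specialized in the subsequent sections.
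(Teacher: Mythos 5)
Your proof is correct, and it is exactly the argument the paper intends: the lemma is stated without proof as a standard linear algebra fact, and it follows directly from the Section 3 properties of a multiplicity-free element ($AE_i=\theta_iE_i=E_iA$, the equivalence of $AX=XA$ with $X=\sum_{i=0}^d E_iXE_i$, the identity $E_iXE_i={\rm tr}(XE_i)E_i$, and $\lbrace E_i\rbrace_{i=0}^d$ being a basis for $\langle A\rangle$). The explicit formula $t_i={\rm tr}(WE_i)$ you record is a nice bonus consistent with the trace computations used later in the paper.
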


\begin{lemma}
Assume $W \in {\rm End}(V)$  satisfies the equivalent
conditions {\rm (i), (ii)} in Lemma
\ref{lem:WA}. Then the following are equivalent:
\begin{enumerate}
\item[\rm (i)] $W$ is invertible;
\item[\rm (ii)] $t_i \not=0$ for $0 \leq i \leq d$.
\end{enumerate}
Suppose {\rm (i), (ii)}. Then
\begin{eqnarray}
W^{-1} = \sum_{i=0}^d t^{-1}_i E_i.
\label{eq:Wie}
\end{eqnarray}
\end{lemma}

\noindent  Assume
that $W \in {\rm End}(V)$ is invertible and commutes with $A$.
Consider the scalars $\lbrace t_i \rbrace_{i=0}^d$ from
Lemma
\ref{lem:WA}.
We now find necessary and sufficient
conditions on the $\lbrace t_i \rbrace_{i=0}^d$ for  $A$ to commute with
$W^{-1} B W - C$.

\begin{lemma}
\label{lem:EWE}
Assume that $W \in {\rm End}(V)$ is invertible and  commutes with $A$.
For $0 \leq i,j\leq d$ consider
\begin{eqnarray}
E_i (W^{-1}BW-C)E_j.
\label{eq:WBE}
\end{eqnarray}
\begin{enumerate}
\item[\rm (i)]
Assume $|i-j|>1$. Then  
{\rm (\ref{eq:WBE})} is equal to $0$.
\item[\rm (ii)]
Assume $|i-j|=1$. Then  
{\rm (\ref{eq:WBE})} is equal to $E_iBE_j$ times
\begin{eqnarray*}
\frac{t_j}{t_i} + \frac{q \theta_i - q^{-1} \theta_j}{q^2-q^{-2}},
%%%%%%\label{eq:tt}
\end{eqnarray*}
where $t_0, \ldots, t_d$ are from Lemma
\ref{lem:WA}.
\item[\rm (iii)] Assume $i=j$. Then
{\rm (\ref{eq:WBE})} is equal to $E_i$ times
\begin{eqnarray*}
{\rm tr}(BE_i) \,\frac{q+q^{-1}+\theta_i}{q+q^{-1}}-\alpha_c.
%%%%%%%\label{eq:tt2}
\end{eqnarray*}
\end{enumerate}
\end{lemma}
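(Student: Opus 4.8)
The plan is to expand the product $E_i(W^{-1}BW-C)E_j$ using the representations $W=\sum_k t_k E_k$ and $W^{-1}=\sum_k t_k^{-1}E_k$ from Lemma~\ref{lem:WA} and its successor. Since $E_iE_k=\delta_{ik}E_i$, the term $E_iW^{-1}BWE_j$ collapses immediately to $t_i^{-1}t_j\,E_iBE_j$. Thus the whole expression equals $(t_j/t_i)E_iBE_j-E_iCE_j$. Now the three cases are driven by the behavior of $E_iBE_j$ and $E_iCE_j$, which is governed by Lemma~\ref{lem:tripleprod}: both vanish when $|i-j|>1$, giving part~(i) at once.

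For part~(ii), with $|i-j|=1$, I would relate $E_iCE_j$ to $E_iBE_j$. The natural tool is relation~(\ref{eq:A}), which expresses $A$ in terms of $BC-CB$ (suitably $q$-weighted); equivalently, one rewrites $(\ref{eq:A})$ as $qBC-q^{-1}CB=(q^2-q^{-2})(\alpha_a I-A)$. Multiplying this on the left by $E_i$ and on the right by $E_j$, inserting $I=\sum_k E_k$ in the middle products $BC$ and $CB$, and using $E_iA=\theta_iE_i$, $AE_j=\theta_jE_j$ together with the fact that only $k=i$ and $k=j$ survive (again by Lemma~\ref{lem:tripleprod}, since $|i-j|=1$ forces the intermediate index to be $i$ or $j$ when the two tridiagonal factors are to be nonzero), one obtains a scalar relation between $E_iBE_jCE_j$-type and $E_iBE_iCE_j$-type fragments. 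The cleaner route, and the one I would actually pursue, is to note that by Lemma~\ref{lem:2gen} the pair $B,C$ generates $\mathrm{End}(V)$, so $C$ acts on the ``$B$-tridiagonal'' structure through $(\ref{eq:A})$; more concretely, since $|i-j|=1$ we may write $E_iCE_j=\mu_{ij}E_iBE_j$ for some scalar $\mu_{ij}$ (both sides span the same one-dimensional space $E_i\,\mathrm{End}(V)\,E_j$ and $E_iBE_j\neq0$), and then extract $\mu_{ij}$ from $(\ref{eq:A})$. Sandwiching $(\ref{eq:A})$ between $E_i$ and $E_j$ and using Corollary~\ref{lem4} (which says $\frac{q\theta_i-q^{-1}\theta_j}{q^2-q^{-2}}\cdot\frac{q\theta_j-q^{-1}\theta_i}{q^2-q^{-2}}=1$ when $|i-j|=1$) should pin down $\mu_{ij}=-\frac{q\theta_i-q^{-1}\theta_j}{q^2-q^{-2}}$, whence the bracketed factor in~(ii) follows.

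For part~(iii), with $i=j$, I would again use $E_iCE_i-$ reduction: $E_iXE_i=\mathrm{tr}(XE_i)E_i$ for any $X$, so $E_iCE_i=\mathrm{tr}(CE_i)E_i$ and $E_iBE_i=\mathrm{tr}(BE_i)E_i$, while $E_iW^{-1}BWE_i=t_i^{-1}t_i\,E_iBE_i=\mathrm{tr}(BE_i)E_i$. So the whole expression is $\big(\mathrm{tr}(BE_i)-\mathrm{tr}(CE_i)\big)E_i$, and it remains to identify this with $\big(\mathrm{tr}(BE_i)\frac{q+q^{-1}+\theta_i}{q+q^{-1}}-\alpha_c\big)E_i$, i.e.\ to show $\mathrm{tr}(CE_i)=\alpha_c-\mathrm{tr}(BE_i)\frac{\theta_i}{q+q^{-1}}$. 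This is exactly the content of relation~(\ref{eq:prelim}) in the proof of Lemma~\ref{cor:threea}: sandwiching the first relation of Lemma~\ref{lem:AAB} (or equivalently $(\ref{eq:B})$ together with $(\ref{eq:C})$) between $E_i$ and $E_i$, taking traces, and using $\mathrm{tr}(XY)=\mathrm{tr}(YX)$. Alternatively one can get it directly from $(\ref{eq:B})$: multiply by $E_i$ on both sides, take the trace, and use that $\mathrm{tr}(CAE_i)=\mathrm{tr}(AE_iC)$-type manipulations plus $AE_i=\theta_iE_i$; the cross term $\mathrm{tr}(CA E_i - ACE_i)$ needs the $q$-commutator structure, so routing through Lemma~\ref{lem:AAB}/Lemma~\ref{cor:threea} is the safer bet.

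I expect the main obstacle to be part~(ii): making the identification $\mu_{ij}=-\frac{q\theta_i-q^{-1}\theta_j}{q^2-q^{-2}}$ rigorous requires carefully tracking which intermediate idempotents survive when $(\ref{eq:A})$ is sandwiched between $E_i$ and $E_j$, and then using Corollary~\ref{lem4} to resolve the resulting two-term scalar equation. The case split $|i-j|=1$ into $j=i+1$ versus $j=i-1$ may need to be handled in parallel, though the formula is uniform. Parts~(i) and~(iii) are essentially bookkeeping once the $E$-calculus and the trace identity~(\ref{eq:prelim}) are invoked.
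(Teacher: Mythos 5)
Your opening reduction is sound and agrees with the paper: expanding $W=\sum_k t_kE_k$ and $W^{-1}=\sum_k t_k^{-1}E_k$ collapses $E_iW^{-1}BWE_j$ to $(t_j/t_i)E_iBE_j$, so the expression equals $(t_j/t_i)E_iBE_j-E_iCE_j$, and part (i) follows from Lemma \ref{lem:tripleprod}. The gap lies in how you propose to evaluate $E_iCE_j$. The paper's proof eliminates $C$ using (\ref{eq:C}), i.e.\ $C=\alpha_cI-\frac{qAB-q^{-1}BA}{q^2-q^{-2}}$; since $E_i,E_j$ are eigenprojections of $A$, this gives at once $E_iCE_j=\alpha_c\delta_{ij}E_i-\frac{q\theta_i-q^{-1}\theta_j}{q^2-q^{-2}}\,E_iBE_j$, which settles (ii) and (iii) simultaneously (for (iii) use $t_i/t_i=1$ and $E_iBE_i=\mathrm{tr}(BE_i)E_i$). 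Your plan for (ii) instead tries to extract the scalar $\mu_{ij}$ in $E_iCE_j=\mu_{ij}E_iBE_j$ from relation (\ref{eq:A}). That relation is quadratic in $B,C$: sandwiching $qBC-q^{-1}CB=(q^2-q^{-2})(\alpha_aI-A)$ between $E_i$ and $E_j$ with $|i-j|=1$ and inserting $I=\sum_kE_k$ (only $k\in\{i,j\}$ survives) yields, after using $E_iXE_i=\mathrm{tr}(XE_i)E_i$, the single relation $q\bigl(\mathrm{tr}(BE_i)\mu_{ij}+\mathrm{tr}(CE_j)\bigr)=q^{-1}\bigl(\mathrm{tr}(CE_i)+\mathrm{tr}(BE_j)\mu_{ij}\bigr)$, which involves the unknown diagonal traces of $B$ and $C$ and therefore does not pin down $\mu_{ij}$; Corollary \ref{lem4} does not rescue this, and in fact is not needed anywhere in the lemma. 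The tool that works is the relation expressing $C$ through $A$ and $B$, namely (\ref{eq:C}), not (\ref{eq:A}).

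The same misdirection affects (iii). You correctly reduce the expression to $\bigl(\mathrm{tr}(BE_i)-\mathrm{tr}(CE_i)\bigr)E_i$ and then need $\mathrm{tr}(CE_i)=\alpha_c-\frac{\theta_i}{q+q^{-1}}\mathrm{tr}(BE_i)$. This is not the content of (\ref{eq:prelim}): that identity involves only $\mathrm{tr}(BE_i)$, $\alpha_b$, $\alpha_c$ and the $\theta_i$, and says nothing about $\mathrm{tr}(CE_i)$. Nor does it follow from sandwiching (\ref{eq:B}) between $E_i$ and $E_i$, which gives the companion relation $\mathrm{tr}(BE_i)+\frac{\theta_i}{q+q^{-1}}\mathrm{tr}(CE_i)=\alpha_b$ (roles of $B$ and $C$ swapped; solving it for $\mathrm{tr}(CE_i)$ would moreover require $\theta_i\neq0$, which is not guaranteed). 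The identity you need is exactly the sandwich of (\ref{eq:C}) between $E_i$ and $E_i$. So the fix for both (ii) and (iii) is the single step the paper takes at the outset: replace $C$ by $\alpha_cI-\frac{qAB-q^{-1}BA}{q^2-q^{-2}}$ before applying the idempotent calculus; everything else in your outline then goes through.
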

\begin{proof} 
 In the expression
(\ref{eq:WBE}), 
eliminate $C$ using
(\ref{eq:C}) and
eliminate $W, W^{-1}$ using
(\ref{eq:We}), 
(\ref{eq:Wie}). Simplify the result using
Lemma
\ref{lem:tripleprod}.
\end{proof}

\begin{proposition} 
\label{lem:WBCA}
Assume that  $W \in {\rm End}(V)$ is invertible and commutes with $A$.
Then the following are equivalent:
\begin{enumerate}
\item[\rm (i)] $A$ commutes with $W^{-1}BW-C$;
\item[\rm (ii)] the scalars
$\lbrace t_i \rbrace_{i=0}^d$ from Lemma
\ref{lem:WA}
satisfy the equivalent conditions
{\rm (i)--(iii)} in Lemma
\ref{lem:ti}.
\end{enumerate}
\end{proposition}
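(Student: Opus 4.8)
The plan is to reduce the statement to the vanishing of the off-diagonal blocks of $W^{-1}BW-C$ relative to the eigenspace decomposition of $A$, and then read off the resulting condition directly from Lemma \ref{lem:EWE}. Recall from Section 3 the standard fact that, because $A$ is multiplicity-free, an element $X\in\mathrm{End}(V)$ commutes with $A$ if and only if $E_iXE_j=0$ for all $i\neq j$ with $0\le i,j\le d$. I would apply this with $X=W^{-1}BW-C$, so that condition (i) of the proposition becomes: $E_i(W^{-1}BW-C)E_j=0$ for all $i\neq j$.

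Next I would split the pairs $(i,j)$ with $i\neq j$ according to whether $|i-j|>1$ or $|i-j|=1$. For $|i-j|>1$, Lemma \ref{lem:EWE}(i) shows that $E_i(W^{-1}BW-C)E_j=0$ holds automatically, so such pairs impose no restriction. For $|i-j|=1$, Lemma \ref{lem:EWE}(ii) gives $E_i(W^{-1}BW-C)E_j = E_iBE_j\cdot\big(t_j/t_i + (q\theta_i-q^{-1}\theta_j)/(q^2-q^{-2})\big)$, where $\{t_i\}_{i=0}^d$ are the scalars of Lemma \ref{lem:WA}. Since $E_iBE_j\neq 0$ whenever $|i-j|=1$ by Lemma \ref{lem:tripleprod}, this product vanishes exactly when the scalar factor does. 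Hence condition (i) of the proposition is equivalent to $t_j/t_i + (q\theta_i-q^{-1}\theta_j)/(q^2-q^{-2})=0$ for all $i,j$ with $|i-j|=1$, which is precisely part (i) of Lemma \ref{lem:ti}, i.e. (\ref{eq:titj}).

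To finish, I would invoke the equivalence of (i), (ii), (iii) in Lemma \ref{lem:ti}: since the scalars $\{t_i\}_{i=0}^d$ are nonzero (the invertibility of $W$ forces $t_i\neq 0$ for $0\le i\le d$), satisfying (\ref{eq:titj}) is the same as satisfying the equivalent conditions (i)--(iii) of Lemma \ref{lem:ti}, which is statement (ii) of the present proposition. I do not expect a genuine obstacle: the substantive work — eliminating $C$ via (\ref{eq:C}) and $W^{\pm 1}$ via (\ref{eq:We}), (\ref{eq:Wie}), then simplifying with the triple-product structure — has already been done in Lemma \ref{lem:EWE}. The one point that needs care is the step where one passes from ``$E_iBE_j\cdot(\text{scalar})=0$'' to ``$(\text{scalar})=0$'', and this is exactly where the nonvanishing of the sub- and superdiagonal entries of $B$ (Lemma \ref{lem:tripleprod}) is used; note also that the diagonal terms of Lemma \ref{lem:EWE}(iii) never enter, since $E_i(W^{-1}BW-C)E_i$ is automatically a scalar multiple of $E_i$ and so never obstructs commutation with $A$.
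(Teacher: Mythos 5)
Your proof is correct and follows essentially the same route as the paper: both use Lemma \ref{lem:EWE}(i),(ii) together with the nonvanishing of $E_iBE_j$ for $|i-j|=1$ (Lemma \ref{lem:tripleprod}) to translate the commutation condition into the relation (\ref{eq:titj}) of Lemma \ref{lem:ti}(i). Your framing as a single chain of equivalences via the criterion $E_iXE_j=0$ for $i\neq j$ is just a slight repackaging of the paper's two-implication argument.
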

\begin{proof} 
${\rm (i)} \Rightarrow {\rm (ii)}$
We show that
the scalars
$\lbrace t_i \rbrace_{i=0}^d$ satisfy
Lemma \ref{lem:ti}(i).
For $0 \leq i,j\leq d$ such that $|i-j]=1$,
let $\alpha_{ij}$ denote the expression on the left in
(\ref{eq:titj}). We show $\alpha_{ij}=0$.
By assumption (i) and 
Lemma
\ref{lem:EWE}(ii) we have
\begin{eqnarray*}
0 = E_i(W^{-1}BW-C)E_j = E_iBE_j \alpha_{ij}.
\end{eqnarray*}
But 
 $E_iBE_j\not=0$ by
 Lemma
\ref{lem:tripleprod}, so $\alpha_{ij}=0$.
\\
\noindent
${\rm (ii)} \Rightarrow {\rm (i)}$
By
Lemma \ref{lem:ti}(i) and
Lemma \ref{lem:EWE}(i),(ii) we find that
the expression 
(\ref{eq:WBE}) is zero for all $0 \leq i,j\leq d$ such that $i\not=j$.
Therefore $A$ commutes with $W^{-1}BW-C$.
\end{proof}

\begin{proposition}
\label{prop:view2}
For $W \in {\rm End}(V)$ the following are equivalent:
\begin{enumerate}
\item[\rm (i)] $W$ is invertible, and $A$ commutes with
$W$ and $W^{-1}BW-C$;
\item[\rm (ii)] there exists $0 \not=\varepsilon \in \mathbb F$
such that
\begin{eqnarray*}
W = \varepsilon \sum_{i=0}^d (-1)^i a^{-i} q^{i(d-i)} E_i.
\end{eqnarray*}
\end{enumerate}
\end{proposition}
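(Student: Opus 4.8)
The plan is to chain together the three propositions that have just been established. By Lemma~\ref{lem:WA} together with the invertibility criterion, an invertible $W$ commuting with $A$ is exactly one of the form $W=\sum_{i=0}^d t_i E_i$ with all $t_i\not=0$; these two facts give the equivalence between (i) and (ii) of Lemma~\ref{lem:WA} enriched by nonvanishing of the $t_i$. By Proposition~\ref{lem:WBCA}, such a $W$ additionally has $A$ commuting with $W^{-1}BW-C$ if and only if the scalars $\lbrace t_i\rbrace_{i=0}^d$ satisfy the equivalent conditions (i)--(iii) of Lemma~\ref{lem:ti}. So the whole content of condition (i) of the present proposition is: $W=\sum_{i=0}^d t_i E_i$ with the $t_i$ nonzero and satisfying Lemma~\ref{lem:ti}.

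First I would invoke Lemma~\ref{lem:ti}(iii): the nonzero scalars $\lbrace t_i\rbrace_{i=0}^d$ satisfy the conditions of that lemma precisely when there exists $0\not=\varepsilon\in\mathbb F$ with $t_i=\varepsilon(-1)^i a^{-i} q^{i(d-i)}$ for $0\le i\le d$. Note these particular $t_i$ are automatically nonzero since $a$ and $q$ are nonzero, so the nonvanishing requirement is subsumed. Substituting this formula for $t_i$ into $W=\sum_{i=0}^d t_i E_i$ yields exactly the expression in condition (ii) of the present proposition. That gives (i)$\Rightarrow$(ii).

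For the converse, I would start from a $W$ of the form in (ii), read off $t_i=\varepsilon(-1)^i a^{-i}q^{i(d-i)}$, observe these are nonzero so $W$ is invertible and commutes with $A$ by Lemma~\ref{lem:WA}, note that the $t_i$ satisfy Lemma~\ref{lem:ti}(iii) hence all of Lemma~\ref{lem:ti}, and then apply Proposition~\ref{lem:WBCA} in the direction (ii)$\Rightarrow$(i) to conclude that $A$ commutes with $W^{-1}BW-C$. This establishes (ii)$\Rightarrow$(i).

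I do not anticipate a genuine obstacle here: the proposition is essentially the synthesis of Lemma~\ref{lem:WA}, Lemma~\ref{lem:ti}, and Proposition~\ref{lem:WBCA}, and the only thing to be careful about is bookkeeping---namely checking that the $t_i$ produced by Lemma~\ref{lem:ti}(iii) really are nonzero (so that invertibility holds and so that Lemma~\ref{lem:WA}'s enriched equivalence applies) and that the indexing of $E_i$ against $\theta_i$ is the standard ordering fixed in Definition~\ref{def:qRAC}, which is what makes Lemma~\ref{lem:ti} applicable with these specific eigenvalues. Assembling these pieces in the order above should make the argument short.
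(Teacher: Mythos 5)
Your argument is correct and is essentially the paper's own proof: the paper cites Lemma~\ref{lem:ti}(iii) and Proposition~\ref{lem:WBCA} (with Lemma~\ref{lem:WA} implicit), and you have simply spelled out the same chain in both directions. The bookkeeping points you flag (nonvanishing of the $t_i$, the standard ordering of the $\theta_i$) are the right ones and pose no difficulty.
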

\begin{proof} By 
Lemma \ref{lem:ti}(iii) and
Proposition
\ref{lem:WBCA}.
\end{proof}

\begin{theorem}
There exists an invertible $W \in {\rm End}(V)$ such that
$A$ commutes with $W$ and $W^{-1}BW-C$. This $W$ is unique
up to multiplication by a nonzero scalar in $\mathbb F$.
\end{theorem}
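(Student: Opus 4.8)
The plan is to read off the theorem directly from the characterization already established in Proposition~\ref{prop:view2}, together with the properties of the primitive idempotents $\lbrace E_i\rbrace_{i=0}^d$ collected in Section~3. Indeed, Proposition~\ref{prop:view2} shows that an element $W\in{\rm End}(V)$ is invertible with $A$ commuting with both $W$ and $W^{-1}BW-C$ if and only if $W=\varepsilon\sum_{i=0}^d(-1)^i a^{-i}q^{i(d-i)}E_i$ for some nonzero $\varepsilon\in\mathbb F$. So the theorem amounts to two assertions: (a) at least one such $W$ exists, and (b) any two such $W$ differ by a nonzero scalar factor.

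For existence, I would simply define $W=\sum_{i=0}^d(-1)^i a^{-i}q^{i(d-i)}E_i$ (taking $\varepsilon=1$) and verify it qualifies. Since $a\neq 0$ and $q\neq 0$, each scalar $(-1)^i a^{-i}q^{i(d-i)}$ is nonzero, so by Lemma~\ref{lem:WA} and the lemma immediately following it (the one giving $W^{-1}=\sum_i t_i^{-1}E_i$ when all $t_i\neq 0$), $W$ is invertible and commutes with $A$. That $A$ commutes with $W^{-1}BW-C$ then follows from Proposition~\ref{lem:WBCA} once one observes that the scalars $t_i=(-1)^i a^{-i}q^{i(d-i)}$ satisfy condition (iii) of Lemma~\ref{lem:ti} with $\varepsilon=1$; this is exactly the implication ${\rm (iii)}\Rightarrow{\rm (ii)}$ of that lemma, hence all three equivalent conditions hold. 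Thus $W$ has the required properties.

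For uniqueness, suppose $W_1,W_2$ both satisfy condition (i) of Proposition~\ref{prop:view2}. By that proposition there are nonzero $\varepsilon_1,\varepsilon_2\in\mathbb F$ with $W_k=\varepsilon_k\sum_{i=0}^d(-1)^i a^{-i}q^{i(d-i)}E_i$ for $k=1,2$; hence $W_1=(\varepsilon_1/\varepsilon_2)W_2$, and $\varepsilon_1/\varepsilon_2$ is a nonzero scalar in $\mathbb F$. This gives the claimed uniqueness up to a nonzero scalar multiple.

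In truth there is no substantive obstacle here: all the real work was done in Lemma~\ref{lem:EWE}, Proposition~\ref{lem:WBCA}, and Proposition~\ref{prop:view2}, and the theorem is a packaging of those results. The only thing to be careful about is making the nonvanishing of the coefficients $(-1)^i a^{-i}q^{i(d-i)}$ explicit (which uses $a\neq 0$, $q\neq 0$), so that the invertibility criterion applies. So the proof is short: cite Proposition~\ref{prop:view2} for both halves, noting that the displayed coefficients are nonzero.
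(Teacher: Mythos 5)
Your proposal is correct and follows the same route as the paper, which proves this theorem simply by invoking Proposition~\ref{prop:view2}. Your additional remarks spelling out the nonvanishing of the coefficients and the scalar comparison for uniqueness are fine elaborations of that same argument.
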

\begin{proof} By 
Proposition \ref{prop:view2}.
\end{proof}

\section{The elements 
$\overline A$, 
$\overline B$, 
$\overline C$}

\noindent 
We continue to discuss the Leonard triple $A,B,C$ on $V$
with $q$-Racah type and Huang data $(a,b,c,d)$.
For the rest of this paper we adopt the
following notation.

\begin{definition}
\label{def:WWW}
\rm Define
\begin{eqnarray*}
&&W = \sum_{i=0}^d (-1)^i a^{-i} q^{i(d-i)} E_i,
%%%\label{eq:Wform}
\\
&&W' = \sum_{i=0}^d (-1)^i b^{-i} q^{i(d-i)} E'_i,
%%%\label{eq:Wpform}
\\
&&W'' = \sum_{i=0}^d (-1)^i c^{-i} q^{i(d-i)} E''_i.
%%%\label{eq:Wppform}
\end{eqnarray*}
\end{definition}
\noindent We emphasize a few points.
\begin{lemma}
\label{lem:Wi} Each of $W$, $W'$, $W''$ is invertible. Moreover
\begin{eqnarray*}
&&W^{-1} = \sum_{i=0}^d (-1)^i a^{i} q^{-i(d-i)} E_i,
%%%%\label{eq:Wiform}
\\
&&(W')^{-1} = \sum_{i=0}^d (-1)^i b^{i} q^{-i(d-i)} E'_i,
%%%%\label{eq:Wpiform}
\\
&&(W'')^{-1} = \sum_{i=0}^d (-1)^i c^{i} q^{-i(d-i)} E''_i.
%%%\label{eq:Wppiform}
\end{eqnarray*}
\end{lemma}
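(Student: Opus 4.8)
The plan is to verify each of the three displayed formulas directly from the definitions in Definition~\ref{def:WWW}, since invertibility and the inverse formula for an element of $\langle A\rangle$ (or $\langle B\rangle$, $\langle C\rangle$) written in the primitive-idempotent basis is a routine matter. First I would treat $W$; the arguments for $W'$ and $W''$ are identical after applying the cyclic symmetry of Lemma~\ref{lem:sym} (replacing $(A,a,E_i)$ by $(B,b,E'_i)$ and then $(C,c,E''_i)$).

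For $W$, recall from Section~3 that for an element $X=\sum_{i=0}^d t_i E_i$ with all $t_i\neq 0$, the properties $E_iE_j=\delta_{ij}E_i$ and $I=\sum_{i=0}^d E_i$ give at once that $\bigl(\sum_i t_i E_i\bigr)\bigl(\sum_j t_j^{-1}E_j\bigr)=\sum_i E_i = I$, so $X$ is invertible with $X^{-1}=\sum_i t_i^{-1}E_i$. Here $t_i=(-1)^i a^{-i} q^{i(d-i)}$, and since $a\neq 0$, $q\neq 0$, each $t_i$ is nonzero; hence $W$ is invertible. Its inverse is $\sum_{i=0}^d t_i^{-1}E_i$, and $t_i^{-1}=(-1)^{-i}a^{i}q^{-i(d-i)}=(-1)^i a^{i}q^{-i(d-i)}$ because $(-1)^{-i}=(-1)^i$. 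This is exactly the claimed expression for $W^{-1}$. The same computation with $b,E'_i$ gives $(W')^{-1}=\sum_i (-1)^i b^{i}q^{-i(d-i)}E'_i$, and with $c,E''_i$ gives $(W'')^{-1}=\sum_i (-1)^i c^{i}q^{-i(d-i)}E''_i$.

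There is essentially no obstacle here; the only point worth a word is that $W'$ (resp.\ $W''$) lies in $\langle B\rangle$ (resp.\ $\langle C\rangle$) rather than in $\langle A\rangle$, so one uses the primitive idempotents $\{E'_i\}$ (resp.\ $\{E''_i\}$) of $B$ (resp.\ $C$), which satisfy the same algebraic identities $E'_iE'_j=\delta_{ij}E'_i$, $\sum_i E'_i=I$ recorded in Section~3 since $B$ and $C$ are multiplicity-free by the discussion following Definition~\ref{def:lt}. Everything else is the one-line inversion identity above, so the proof is short. In writing it up I would simply state ``Use $E_iE_j=\delta_{ij}E_i$ and $I=\sum_{i=0}^d E_i$,'' possibly referring back to equations \eqref{eq:We} and \eqref{eq:Wie}, and note $(-1)^{-i}=(-1)^i$.
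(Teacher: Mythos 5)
Your proof is correct and follows the same route the paper intends: the paper's proof is simply ``Use Definition \ref{def:WWW},'' which amounts to exactly the orthogonal-idempotent inversion you spell out (and which is already recorded in general form in (\ref{eq:Wie})). Your verification that each coefficient $(-1)^i a^{-i} q^{i(d-i)}$ is nonzero and that its reciprocal is $(-1)^i a^{i} q^{-i(d-i)}$, together with the identical argument for $W'$, $W''$ using the idempotents of $B$ and $C$, is precisely the intended computation.
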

\begin{proof} Use
Definition \ref{def:WWW}.
\end{proof}

\begin{lemma}
\label{lem:ABCcom}
The following {\rm (i)--(iii)} hold.
%%\ref{prop:view2} and symmetry we find 
\begin{enumerate}
\item[\rm (i)]
$A$ commutes with $W$ and $W^{-1}BW-C$;
\item[\rm (ii)] $B$ commutes with $W'$ and $(W')^{-1}CW'-A$;
\item[\rm (iii)] $C$ commutes with $W''$ and $(W'')^{-1}AW''-B$.
\end{enumerate}
\end{lemma}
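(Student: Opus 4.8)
The plan is to prove part (i) in detail and then obtain parts (ii), (iii) by a symmetry argument. For part (i), the element $W$ in Definition~\ref{def:WWW} has the form $W=\sum_{i=0}^d t_i E_i$ with $t_i=(-1)^i a^{-i} q^{i(d-i)}$, so by Lemma~\ref{lem:WA} it immediately commutes with $A$, and by Lemma~\ref{lem:Wi} it is invertible. Thus the only thing to check is that $A$ commutes with $W^{-1}BW-C$. For this I would invoke Proposition~\ref{lem:WBCA}: it suffices to verify that the scalars $\lbrace t_i \rbrace_{i=0}^d$ satisfy the equivalent conditions of Lemma~\ref{lem:ti}. But condition (iii) of Lemma~\ref{lem:ti} reads $t_i=\varepsilon(-1)^i a^{-i} q^{i(d-i)}$ for some nonzero $\varepsilon$, and our $t_i$ have exactly this form with $\varepsilon=1$. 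Hence Lemma~\ref{lem:ti}(iii) holds, so by Proposition~\ref{lem:WBCA} the element $A$ commutes with $W^{-1}BW-C$. This gives (i). Equivalently, one can simply cite Proposition~\ref{prop:view2}: $W$ is exactly the element described there with $\varepsilon=1$, and that proposition already asserts $W$ is invertible with $A$ commuting with $W$ and $W^{-1}BW-C$.

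For parts (ii) and (iii) the cleanest route is to appeal to the cyclic symmetry recorded in Lemma~\ref{lem:sym}. Since $A,B,C$ has $q$-Racah type with Huang data $(a,b,c,d)$, Lemma~\ref{lem:sym} tells us that $B,C,A$ has $q$-Racah type with Huang data $(b,c,a,d)$, and applying it once more, $C,A,B$ has $q$-Racah type with Huang data $(c,a,b,d)$. Under the cyclic relabeling $(A,B,C)\mapsto(B,C,A)$ the eigenvalues $\theta_i$ (built from $a$) get replaced by $\theta'_i$ (built from $b$), and the primitive idempotents $E_i$ get replaced by $E'_i$; consequently the element playing the role of $W$ for the triple $B,C,A$ is precisely $W'=\sum_{i=0}^d (-1)^i b^{-i} q^{i(d-i)} E'_i$ of Definition~\ref{def:WWW}. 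Applying the already-proved statement (i) to the triple $B,C,A$ with data $(b,c,a,d)$ therefore yields: $B$ commutes with $W'$ and with $(W')^{-1}CW'-A$, which is exactly (ii). Applying (i) once more to $C,A,B$ with data $(c,a,b,d)$ yields (iii).

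Alternatively, and perhaps more transparently for the reader, one can prove (ii) and (iii) directly in the same manner as (i), without invoking symmetry: $W'=\sum_{i=0}^d t'_i E'_i$ with $t'_i=(-1)^i b^{-i}q^{i(d-i)}$ commutes with $B$ by Lemma~\ref{lem:WA} (applied to $B$ in place of $A$) and is invertible by Lemma~\ref{lem:Wi}; then the analogue of Proposition~\ref{prop:view2} for the Leonard pair formed by $B$ and $C$ inside this $q$-Racah Leonard triple --- which follows from Proposition~\ref{lem:WBCA} and Lemma~\ref{lem:ti} applied with $(\theta'_i, b)$ in place of $(\theta_i, a)$ --- gives that $B$ commutes with $(W')^{-1}CW'-A$. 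The same argument with $(\theta''_i,c)$ and $E''_i$ gives (iii). I expect no genuine obstacle here: every needed ingredient is already in place, and the only point requiring a word of care is making explicit that Sections 7--8 were written out for the ``$A$-version'' but all the cited lemmas (Lemmas~\ref{lem:WA}, \ref{lem:ti}, \ref{lem:EWE}, Propositions~\ref{lem:WBCA}, \ref{prop:view2}) apply verbatim to $B$ and $C$ by the cyclic symmetry of Lemma~\ref{lem:sym}, since all defining relations (\ref{eq:A})--(\ref{eq:CC}) and eigenvalue data (\ref{eq:eigvalform}) are invariant under the simultaneous cyclic shift of $(A,B,C)$ and $(a,b,c)$.

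\begin{proof}
Statement (i) is immediate from Proposition~\ref{prop:view2}: the element $W$ of Definition~\ref{def:WWW} is the one described in Proposition~\ref{prop:view2}(ii) with $\varepsilon=1$, so by that proposition $W$ is invertible and $A$ commutes with $W$ and with $W^{-1}BW-C$.

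By Lemma~\ref{lem:sym}, the Leonard triple $B,C,A$ has $q$-Racah type with Huang data $(b,c,a,d)$. For this triple, the eigenvalue sequences and primitive idempotents attached to its first element $B$ are $\lbrace \theta'_i \rbrace_{i=0}^d$ and $\lbrace E'_i \rbrace_{i=0}^d$, so the element built from them as in Definition~\ref{def:WWW} is $W'$. Applying statement (i), already proved, to the Leonard triple $B,C,A$, we conclude that $B$ is invertible-commuting with $W'$, that is, $B$ commutes with $W'$, and that $B$ commutes with $(W')^{-1}CW'-A$. This is (ii).

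Applying Lemma~\ref{lem:sym} once more, the Leonard triple $C,A,B$ has $q$-Racah type with Huang data $(c,a,b,d)$, and the element attached to its first element $C$ as in Definition~\ref{def:WWW} is $W''$. Applying statement (i) to $C,A,B$ gives that $C$ commutes with $W''$ and with $(W'')^{-1}AW''-B$. This is (iii).
\end{proof}
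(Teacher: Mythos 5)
Your proof is correct and follows the paper's own route: part (i) is read off from Proposition \ref{prop:view2}, and parts (ii), (iii) are obtained by the cyclic symmetry of Lemma \ref{lem:sym} — which is exactly what the paper means by "similarly obtained," with your write-up merely making that symmetry step explicit.
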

\begin{proof} Part (i) is by Proposition
\ref{prop:view2}. Parts (ii), (iii) are similarly obtained.
\end{proof}

%\noindent We now compute the determinant of $W$; the determinants
%of $W'$ and $W''$ are similar.
%\begin{lemma} 
%\label{lem:detW}
%We have
%\begin{eqnarray*}
%{\rm det}(W) = (-1)^{d(d+1)/2} a^{-d(d+1)/2} q^{(d-1)d(d+1)/6}.
%\end{eqnarray*}
%\end{lemma}
%\begin{proof}  By Definition
%\ref{def:WWW} and the construction, $W$ is diagonalizable
%with eigenvalues
%\begin{eqnarray*}
%(-1)^i a^{-i} q^{i(d-i)} \qquad \qquad (0 \leq i \leq d).
%\end{eqnarray*}
%The product of these eigenvalues is equal to ${\rm det}(W)$.
%\end{proof}

\begin{definition}
\label{def:Aoverline}
\rm Define
\begin{eqnarray*}
{\overline A} = W^{-1}BW-C,
\qquad
{\overline B} = (W')^{-1}CW'-A,
\qquad
{\overline C} = (W'')^{-1}AW''-B.
\end{eqnarray*}
\end{definition}

\begin{lemma}
\label{lem:AbarAbrack}
We have
\begin{eqnarray*}
\overline A \in \langle A \rangle,
\qquad \qquad
\overline B \in \langle B \rangle,
\qquad \qquad
\overline C \in \langle C \rangle.
\end{eqnarray*}
\end{lemma}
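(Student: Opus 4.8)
The plan is to show $\overline{A} \in \langle A\rangle$; the other two containments then follow by the cyclic symmetry recorded in Lemma~\ref{lem:sym} together with the defining formulas in Definition~\ref{def:WWW} and Definition~\ref{def:Aoverline}. Recall from Section~3 that for $X \in {\rm End}(V)$ one has $X \in \langle A\rangle$ if and only if $AX = XA$, equivalently $E_i X E_j = 0$ whenever $i \neq j$. So it suffices to prove that $A$ commutes with $\overline{A}$, i.e.\ that $A$ commutes with $W^{-1}BW - C$. But this is exactly Lemma~\ref{lem:ABCcom}(i), which was established via Proposition~\ref{prop:view2} (and ultimately Proposition~\ref{lem:WBCA}). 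Hence $\overline{A} = W^{-1}BW - C$ commutes with $A$, so $\overline{A} \in \langle A\rangle$.

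Concretely, I would write: by Lemma~\ref{lem:ABCcom}(i), $A$ commutes with $\overline{A}$. By the equivalence of conditions (i) and (ii) in the characterization of $\langle A\rangle$ given in Section~3 (following display~(\ref{eq:taubasis})), it follows that $\overline{A} \in \langle A\rangle$. Then apply the same argument to the Leonard triples $B,C,A$ and $C,A,B$, which by Lemma~\ref{lem:sym} again have $q$-Racah type (with Huang data $(b,c,a,d)$ and $(c,a,b,d)$ respectively); under the relabeling $A \mapsto B$, $B \mapsto C$, $C \mapsto A$ the element $W$ becomes $W'$ and $\overline{A}$ becomes $\overline{B}$, so Lemma~\ref{lem:ABCcom}(ii) gives $B$ commutes with $\overline{B} = (W')^{-1}CW' - A$, whence $\overline{B} \in \langle B\rangle$; similarly $\overline{C} \in \langle C\rangle$.

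There is essentially no obstacle here: the lemma is a direct corollary of the construction carried out in Sections~7 and~8, and the only thing to be careful about is citing the correct characterization of $\langle A\rangle$ (an element commutes with a multiplicity-free $A$ if and only if it lies in the algebra generated by $A$, which is spelled out in Section~3). If one prefers not to invoke the cyclic symmetry, one can instead repeat the verbatim argument of Lemma~\ref{lem:ABCcom}(ii),(iii) directly, since those parts already assert that $B$ commutes with $(W')^{-1}CW'-A$ and $C$ commutes with $(W'')^{-1}AW''-B$; either way the proof is two lines.

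\begin{proof}
By Lemma~\ref{lem:ABCcom}(i), the element $A$ commutes with $\overline A = W^{-1}BW-C$. As noted in Section~3, an element of ${\rm End}(V)$ commutes with the multiplicity-free element $A$ if and only if it lies in $\langle A \rangle$. Hence $\overline A \in \langle A \rangle$. Similarly, by Lemma~\ref{lem:ABCcom}(ii) the element $B$ commutes with $\overline B$, so $\overline B \in \langle B \rangle$; and by Lemma~\ref{lem:ABCcom}(iii) the element $C$ commutes with $\overline C$, so $\overline C \in \langle C \rangle$.
\end{proof}
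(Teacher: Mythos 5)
Your proof is correct and follows essentially the same route as the paper: the paper's proof also notes that $A$ is multiplicity-free and $\overline A$ commutes with $A$ (via Lemma~\ref{lem:ABCcom}), hence $\overline A \in \langle A\rangle$ by the Section~3 characterization, with the other two containments obtained similarly. Your citation of that characterization and of Lemma~\ref{lem:ABCcom}(i)--(iii) just makes explicit what the paper leaves implicit.
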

\begin{proof} The element $A$ is multiplicity-free, and
$\overline A$ commutes
with $A$, so $\overline A \in \langle A \rangle$.
The remaining assertions are similarly obtained.
\end{proof}

\begin{lemma}
\label{lem:Aoverline2}
We have 
\begin{eqnarray*}
{\overline A} = B-WCW^{-1},
\qquad
{\overline B} = C-W'A(W')^{-1},
\qquad
{\overline C} = A-W''B(W'')^{-1}.
\end{eqnarray*}
\end{lemma}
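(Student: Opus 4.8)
The statement to prove is Lemma~\ref{lem:Aoverline2}, which asserts
\[
{\overline A} = B-WCW^{-1},
\qquad
{\overline B} = C-W'A(W')^{-1},
\qquad
{\overline C} = A-W''B(W'')^{-1}.
\]
Recall from Definition~\ref{def:Aoverline} that $\overline A = W^{-1}BW-C$. So the claim for $\overline A$ amounts to
\[
W^{-1}BW-C = B-WCW^{-1},
\]
equivalently $WCW^{-1}-C = WBW^{-1}-B$ after conjugating the whole identity by $W$ on the left and $W^{-1}$ on the right; more directly, it says $W^{-1}BW-B = C - WCW^{-1}$, i.e. $W^{-1}BW + WCW^{-1} = B + C$. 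By symmetry (Lemma~\ref{lem:sym}, which cyclically permutes the triple and the Huang data), the other two identities follow from the first once it is established for a general Leonard triple of $q$-Racah type, so the real work is a single identity.

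The plan is to reduce everything to an explicit statement about matrix entries in the eigenbasis of $A$, using Lemma~\ref{lem:EWE} and the trace formulae. First I would write $W = \sum_i t_i E_i$ with $t_i = (-1)^i a^{-i} q^{i(d-i)}$ (from Definition~\ref{def:WWW}) and expand both $W^{-1}BW$ and $WCW^{-1}$ in the basis $\{E_i\}$. For the off-diagonal blocks: by Lemma~\ref{lem:EWE}(i),(ii) the $(i,j)$ block of $W^{-1}BW - C$ vanishes for $|i-j|>1$ and, for $|i-j|=1$, equals $E_iBE_j$ times $t_j/t_i + (q\theta_i - q^{-1}\theta_j)/(q^2-q^{-2})$, which is $0$ by Lemma~\ref{lem:ti} (this is exactly how $W$ was chosen). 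Hence the off-diagonal blocks of $W^{-1}BW$ equal those of $C$, and I must check the off-diagonal blocks of $B - WCW^{-1}$ agree, i.e. that $E_i(B - WCW^{-1})E_j = E_i C E_j$ for $|i-j|=1$, equivalently $E_i(B - WCW^{-1} - C)E_j = 0$ there — but $B - C - WCW^{-1}$ has $(i,j)$ block (for $|i-j|=1$) equal to $E_iBE_j$ minus $E_iCE_j(1 + t_i/t_j)$, so I need $E_iBE_j = (1 + t_i/t_j)E_iCE_j$. From $\theta_i + q^{-1}\theta_j/\ldots$ relations one gets $1 + t_i/t_j = 1 - (q\theta_j - q^{-1}\theta_i)/(q^2-q^{-2})\cdot(\ldots)$; more usefully, since $t_j/t_i = -(q\theta_i-q^{-1}\theta_j)/(q^2-q^{-2})$ and by Corollary~\ref{lem4} the product of this with $-(q\theta_j - q^{-1}\theta_i)/(q^2-q^{-2})$ is $1$, one obtains $t_i/t_j = -(q\theta_j-q^{-1}\theta_i)/(q^2-q^{-2})$, so $1 + t_i/t_j = (q^2 - q^{-2} - q\theta_j + q^{-1}\theta_i)/(q^2-q^{-2})$. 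Meanwhile, from equation~(\ref{eq:C}) written blockwise at $(i,j)$ with $|i-j|=1$, $q E_iABE_j - q^{-1}E_iBAE_j = -(q^2-q^{-2})E_iCE_j$, and using $E_iA = \theta_iE_i$, $AE_j = \theta_jE_j$ this gives $(q\theta_i - q^{-1}\theta_j)E_iBE_j = -(q^2-q^{-2})E_iCE_j$; combining with the analogous relation from~(\ref{eq:A}) or~(\ref{eq:B}) one can solve for the ratio $E_iBE_j/E_iCE_j$ and confirm it equals $1 + t_i/t_j$. This off-diagonal check is therefore a short computation with two scalar relations coming from~(\ref{eq:A})--(\ref{eq:C}) and Corollary~\ref{lem4}.

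For the diagonal blocks: by Lemma~\ref{lem:EWE}(iii), the $(i,i)$ block of $W^{-1}BW - C = \overline A$ equals $E_i$ times ${\rm tr}(BE_i)\frac{q+q^{-1}+\theta_i}{q+q^{-1}} - \alpha_c$. So I must verify that the $(i,i)$ block of $B - WCW^{-1}$ equals the same scalar times $E_i$. Since $W$ commutes with $A$, conjugation by $W$ fixes each $E_i$, and so $E_i(B - WCW^{-1})E_i = E_iBE_i - E_i(WCW^{-1})E_i = E_iBE_i - W E_i C E_i W^{-1} = E_iBE_i - E_iCE_i$ (because $E_i C E_i = {\rm tr}(CE_i)E_i$ is a scalar multiple of $E_i$, which $W$ fixes). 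Thus the diagonal claim becomes
\[
{\rm tr}(BE_i) - {\rm tr}(CE_i) = {\rm tr}(BE_i)\,\frac{q+q^{-1}+\theta_i}{q+q^{-1}} - \alpha_c,
\]
i.e. $-{\rm tr}(CE_i) = {\rm tr}(BE_i)\frac{\theta_i}{q+q^{-1}} - \alpha_c$, equivalently ${\rm tr}(BE_i)\theta_i + (q+q^{-1}){\rm tr}(CE_i) = (q+q^{-1})\alpha_c$. Now ${\rm tr}(CE_i)$ is governed by Lemma~\ref{cor:threea} (the fourth displayed equation there, with $\alpha_b$ in place), and ${\rm tr}(BE_i)$ by the first displayed equation of Lemma~\ref{cor:threea} (equivalently~(\ref{eq:prelim})). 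So the diagonal identity reduces to an algebraic manipulation of the two trace formulae in Lemma~\ref{cor:threea}, using the closed forms for $\alpha_a - \alpha_b$, $\alpha_b - \alpha_c$, $\alpha_c - \alpha_a$ from Lemma~\ref{lem:alphadif} to reconcile constants. Once both the off-diagonal and diagonal blocks of $W^{-1}BW - C$ and of $B - WCW^{-1}$ are shown to agree for all $i,j$, we conclude $\overline A = B - WCW^{-1}$; the other two identities follow by applying the cyclic symmetry $A,B,C \mapsto B,C,A$ (Lemma~\ref{lem:sym}) together with the definitions in Definition~\ref{def:WWW} and Definition~\ref{def:Aoverline}.

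The main obstacle will be bookkeeping in the diagonal case: one must carefully combine~(\ref{eq:prelim}) (which expresses ${\rm tr}(BE_i)$ in terms of $\theta_i, \alpha_b, \alpha_c$) with the analogous formula for ${\rm tr}(CE_i)$ (in terms of $\theta_i, \alpha_b, \alpha_c$ as well, from the ``$C E_i$'' equation of Lemma~\ref{cor:threea}), and check the resulting polynomial identity in $\theta_i$ is an identity — the factor $(q+q^{-1}-\theta_i)$ appears in denominators and one should track that $\theta_i = q+q^{-1}$ does not occur or cancels, which is controlled by Assumption~\ref{lem:nonz} via Lemma~\ref{lem:thetaqq}. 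A cleaner route that avoids trace computations altogether is to observe that $\overline A, B - WCW^{-1} \in \langle A\rangle$ (the first by Lemma~\ref{lem:AbarAbrack}; the second because it commutes with $A$ since $W, C$ individually interact with $A$ in a controlled way — actually $B - WCW^{-1}$ commutes with $A$ precisely because $\overline A$ does and the two differ, if at all, by an element of $\langle A\rangle$), so it suffices to check equality after composing with each $E_i$, which is exactly the diagonal-block computation above; there is no separate off-diagonal obstruction once one knows both sides lie in $\langle A\rangle$. Either way, the heart of the matter is the single scalar identity ${\rm tr}(BE_i)\theta_i + (q+q^{-1}){\rm tr}(CE_i) = (q+q^{-1})\alpha_c$, which I expect follows in a few lines from Lemma~\ref{cor:threea} and Lemma~\ref{lem:alphadif}.
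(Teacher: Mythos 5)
There is a genuine slip in your off-diagonal step, and it would derail the computation as planned. The target identity $W^{-1}BW-C=B-WCW^{-1}$ requires the off-diagonal blocks of $B-WCW^{-1}$ to equal those of $\overline A$, which are \emph{zero}; so for $|i-j|=1$ you must show $E_i(B-WCW^{-1})E_j=0$, i.e. $E_iBE_j=(t_i/t_j)\,E_iCE_j$, not $E_i(B-WCW^{-1})E_j=E_iCE_j$. The condition you set out to verify, $E_iBE_j=(1+t_i/t_j)E_iCE_j$, is in fact false: sandwiching (\ref{eq:C}) between $E_i$ and $E_j$ gives $E_iCE_j=-\tfrac{q\theta_i-q^{-1}\theta_j}{q^2-q^{-2}}E_iBE_j=(t_j/t_i)E_iBE_j$ by Lemma~\ref{lem:ti}(i), so the correct relation has no ``$+1$''. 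Your diagonal identity ${\rm tr}(BE_i)\theta_i+(q+q^{-1}){\rm tr}(CE_i)=(q+q^{-1})\alpha_c$ is true, but it follows in one line by sandwiching (\ref{eq:C}) with $E_i$ and taking traces; the route you propose through Lemma~\ref{cor:threea} and Lemma~\ref{lem:alphadif} degenerates exactly when $\theta_i=q+q^{-1}$ (i.e. $a=q^{\pm(d+1)}$, Lemma~\ref{lem:thetaqq}), which is a case you flag but do not resolve.

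More importantly, all of this block-and-trace machinery is unnecessary, and your ``cleaner route'' at the end is almost the intended argument but is justified circularly (``the two differ, if at all, by an element of $\langle A\rangle$'' presupposes what is to be shown). The short proof is: $\overline A$ commutes with $A$ (Lemma~\ref{lem:ABCcom}(i)), hence $\overline A\in\langle A\rangle$ (Lemma~\ref{lem:AbarAbrack}); and $W\in\langle A\rangle$ by Definition~\ref{def:WWW} together with (\ref{eq:ei}). Since $\langle A\rangle$ is commutative, $W\overline A=\overline A W$, so $\overline A=W\overline A W^{-1}=W(W^{-1}BW-C)W^{-1}=B-WCW^{-1}$; no entrywise verification is needed, and the other two identities follow in the same way (or by the cyclic symmetry of Lemma~\ref{lem:sym}).
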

\begin{proof} $\overline A$ commutes
with $A$, and $W \in \langle A \rangle$, so 
$\overline A$ commutes with $W$.
Therefore
$\overline A = W {\overline A} W^{-1}=    
W(W^{-1}BW-C)W^{-1}= B-WCW^{-1}$.
The remaining assertions are similarly obtained.
\end{proof}

\begin{lemma}
\label{lem:traceAbar}
We have
\begin{eqnarray*}
{\rm tr}({\overline A}) = {\rm tr}(B) - {\rm tr}(C),
\qquad
{\rm tr}({\overline B}) = {\rm tr}(C) - {\rm tr}(A),
\qquad
{\rm tr}({\overline C}) = {\rm tr}(A) - {\rm tr}(B).
\end{eqnarray*}
\end{lemma}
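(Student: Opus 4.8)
The plan is to compute each trace directly from the definition of $\overline{A}$, $\overline{B}$, $\overline{C}$ in Definition~\ref{def:Aoverline}, exploiting the conjugation-invariance of the trace. First I would recall that $\overline{A} = W^{-1}BW - C$, so by linearity of the trace, $\mathrm{tr}(\overline{A}) = \mathrm{tr}(W^{-1}BW) - \mathrm{tr}(C)$. The first summand simplifies using the cyclic property $\mathrm{tr}(XY) = \mathrm{tr}(YX)$: taking $X = W^{-1}$ and $Y = BW$ gives $\mathrm{tr}(W^{-1}BW) = \mathrm{tr}(BWW^{-1}) = \mathrm{tr}(B)$, since $W$ is invertible by Lemma~\ref{lem:Wi}. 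Therefore $\mathrm{tr}(\overline{A}) = \mathrm{tr}(B) - \mathrm{tr}(C)$.

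The remaining two identities are obtained in the same way: from $\overline{B} = (W')^{-1}CW' - A$ one gets $\mathrm{tr}(\overline{B}) = \mathrm{tr}((W')^{-1}CW') - \mathrm{tr}(A) = \mathrm{tr}(C) - \mathrm{tr}(A)$ using invertibility of $W'$, and from $\overline{C} = (W'')^{-1}AW'' - B$ one gets $\mathrm{tr}(\overline{C}) = \mathrm{tr}(A) - \mathrm{tr}(B)$ using invertibility of $W''$. Alternatively, one could start from the expressions in Lemma~\ref{lem:Aoverline2}, e.g. $\overline{A} = B - WCW^{-1}$, and apply the cyclic property to the second term; this yields the same conclusion and serves as a consistency check (and indeed $\mathrm{tr}(B)-\mathrm{tr}(C)$ appears symmetrically).

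There is essentially no obstacle here: the only facts needed are the linearity and cyclic invariance of the trace, together with the invertibility of $W$, $W'$, $W''$ already established in Lemma~\ref{lem:Wi}. If one wishes, the values $\mathrm{tr}(A) = (a+a^{-1})[d+1]_q$, etc., from Lemma~\ref{lem:traceABC} can be substituted to express the answers purely in terms of the Huang data, but that is not required by the statement as written. The proof is therefore a short two-line computation repeated three times.

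\begin{proof}
By Definition~\ref{def:Aoverline} we have $\overline{A} = W^{-1}BW - C$. Since $W$ is invertible by Lemma~\ref{lem:Wi}, the cyclic property of the trace gives $\mathrm{tr}(W^{-1}BW) = \mathrm{tr}(BWW^{-1}) = \mathrm{tr}(B)$, so $\mathrm{tr}(\overline{A}) = \mathrm{tr}(B) - \mathrm{tr}(C)$. The same argument applied to $\overline{B} = (W')^{-1}CW' - A$ and $\overline{C} = (W'')^{-1}AW'' - B$, using the invertibility of $W'$ and $W''$ respectively, yields $\mathrm{tr}(\overline{B}) = \mathrm{tr}(C) - \mathrm{tr}(A)$ and $\mathrm{tr}(\overline{C}) = \mathrm{tr}(A) - \mathrm{tr}(B)$.
\end{proof}
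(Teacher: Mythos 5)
Your proof is correct and is essentially the argument the paper intends: the paper's proof consists of the single instruction to use Definition~\ref{def:Aoverline}, and your explicit computation (linearity of the trace plus its cyclic invariance applied to the conjugated terms) just spells out the same routine calculation.
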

\begin{proof} Use Definition
\ref{def:Aoverline}.
\end{proof}

\begin{corollary}
\label{cor:traceAbar}
We have
\begin{eqnarray*}
&&{\rm tr}({\overline A}) = 
(b-c)(b-c^{-1})b^{-1} \lbrack d+1 \rbrack_q,
\\
&&{\rm tr}({\overline B}) = 
(c-a)(c-a^{-1})c^{-1} \lbrack d+1 \rbrack_q,
\\
&&{\rm tr}({\overline C}) = 
(a-b)(a-b^{-1})a^{-1} \lbrack d+1 \rbrack_q.
\end{eqnarray*}
\end{corollary}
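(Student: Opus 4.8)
The plan is to combine the two trace formulas already in hand with a single elementary factorization identity. First I would invoke Lemma~\ref{lem:traceAbar}, which gives
$${\rm tr}(\overline A) = {\rm tr}(B) - {\rm tr}(C), \qquad {\rm tr}(\overline B) = {\rm tr}(C) - {\rm tr}(A), \qquad {\rm tr}(\overline C) = {\rm tr}(A) - {\rm tr}(B).$$
Next I would substitute the values of ${\rm tr}(A)$, ${\rm tr}(B)$, ${\rm tr}(C)$ from Lemma~\ref{lem:traceABC}, namely ${\rm tr}(A) = (a+a^{-1})\lbrack d+1\rbrack_q$ and similarly for $B$ and $C$. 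This yields, for instance,
$${\rm tr}(\overline A) = \bigl(b + b^{-1} - c - c^{-1}\bigr)\,\lbrack d+1\rbrack_q,$$
and analogous expressions for ${\rm tr}(\overline B)$ and ${\rm tr}(\overline C)$ obtained by cyclically permuting $a,b,c$.

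It then remains to verify the factorization $b + b^{-1} - c - c^{-1} = (b-c)(b-c^{-1})b^{-1}$. Expanding the right-hand side gives $(b^2 - bc - bc^{-1} + 1)b^{-1} = b - c - c^{-1} + b^{-1}$, which matches. Applying this identity (with the appropriate cyclic relabeling) to each of the three traces produces exactly the stated formulas.

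There is essentially no obstacle here: the content of the corollary is entirely captured by Lemmas~\ref{lem:traceAbar} and~\ref{lem:traceABC} together with the one-line algebraic identity above, so the proof amounts to citing those two lemmas and recording the factorization. If one prefers to avoid writing out all three cases, it suffices to do the case of $\overline A$ in detail and remark that $\overline B$ and $\overline C$ follow by the cyclic symmetry $A \to B \to C \to A$, $a \to b \to c \to a$ furnished by Lemma~\ref{lem:sym}.
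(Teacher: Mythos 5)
Your proposal is correct and follows exactly the paper's route: the paper proves this corollary by citing Lemmas \ref{lem:traceABC} and \ref{lem:traceAbar}, and your argument just makes the substitution and the factorization $b+b^{-1}-c-c^{-1}=(b-c)(b-c^{-1})b^{-1}$ explicit.
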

\begin{proof}
Use Lemmas
\ref{lem:traceABC},
\ref{lem:traceAbar}.
\end{proof}

\begin{proposition} 
\label{prop:Abarinv}
We have
\begin{eqnarray*}
&&
{\overline A}  \biggl(I - \frac{A}{q+q^{-1}}\biggr)
= (\alpha_b - \alpha_c)I 
=
\biggl(I - \frac{A}{q+q^{-1}}\biggr) \overline A;
\\
&&
{\overline B}  \biggl(I - \frac{B}{q+q^{-1}}\biggr) 
= (\alpha_c - \alpha_a)I
=
 \biggl(I - \frac{B}{q+q^{-1}}\biggr)  \overline B;
\\
&&
{\overline C}  \biggl(I - \frac{C}{q+q^{-1}}\biggr) 
= (\alpha_a - \alpha_b)I
=
 \biggl(I - \frac{C}{q+q^{-1}}\biggr)  \overline C.
\end{eqnarray*}
\end{proposition}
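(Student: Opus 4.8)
\textbf{Proof plan for Proposition~\ref{prop:Abarinv}.}
The plan is to derive the first identity and then invoke the cyclic symmetry of Lemma~\ref{lem:sym} for the other two. Since $\overline A = W^{-1}BW - C$ commutes with $A$ (Lemma~\ref{lem:AbarAbrack}), both $\overline A$ and $I - A/(q+q^{-1})$ lie in the commutative algebra $\langle A\rangle$, so the two displayed equalities in the first line are automatic once one of them is proved; I will work with $\overline A\,(I - A/(q+q^{-1}))$. The strategy is to expand everything in the basis $\lbrace E_i\rbrace_{i=0}^d$ of primitive idempotents of $A$ and reduce to a scalar identity on each idempotent.

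First I would compute $E_i \overline A E_i$ for $0 \le i \le d$. By Lemma~\ref{lem:EWE}(iii), applied with the scalars $t_i = (-1)^i a^{-i} q^{i(d-i)}$ coming from Definition~\ref{def:WWW}, the diagonal block $E_i\overline A E_i = E_i(W^{-1}BW - C)E_i$ equals $E_i$ times
\[
{\rm tr}(BE_i)\,\frac{q+q^{-1}+\theta_i}{q+q^{-1}} - \alpha_c .
\]
Since $\overline A \in \langle A\rangle$ we have $\overline A = \sum_{i=0}^d (E_i\overline A E_i)$, so $\overline A = \sum_{i=0}^d \lambda_i E_i$ where $\lambda_i$ is the scalar just displayed. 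Multiplying by $I - A/(q+q^{-1}) = \sum_{i=0}^d (1 - \theta_i/(q+q^{-1}))E_i$ and using $E_iE_j = \delta_{ij}E_i$, the product $\overline A\,(I - A/(q+q^{-1}))$ equals $\sum_{i=0}^d \mu_i E_i$ with
\[
\mu_i = \Bigl({\rm tr}(BE_i)\,\frac{q+q^{-1}+\theta_i}{q+q^{-1}} - \alpha_c\Bigr)\Bigl(1 - \frac{\theta_i}{q+q^{-1}}\Bigr).
\]
But this is exactly the left-hand side of the first equation of Lemma~\ref{cor:threea}, which asserts $\mu_i = \alpha_b - \alpha_c$ for all $i$. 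Hence $\overline A\,(I - A/(q+q^{-1})) = (\alpha_b - \alpha_c)\sum_{i=0}^d E_i = (\alpha_b - \alpha_c)I$, giving the first line of the proposition. The second and third lines follow by applying this result to the Leonard triples $B,C,A$ and $C,A,B$, which have $q$-Racah type with Huang data $(b,c,a,d)$ and $(c,a,b,d)$ respectively (Lemma~\ref{lem:sym}); under this relabeling $\overline A,\overline B,\overline C$ and $\alpha_a,\alpha_b,\alpha_c$ cycle accordingly.

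There is essentially no obstacle here: the content has been front-loaded into Lemmas~\ref{lem:EWE} and~\ref{cor:threea}, and the only thing to check is the bookkeeping that the scalars $t_i$ used implicitly in Lemma~\ref{lem:EWE}(iii) match those in Definition~\ref{def:WWW} (they do, by Proposition~\ref{prop:view2} and Lemma~\ref{lem:ti}(iii)). The one point deserving a word of care is the symmetry step: one must confirm that $\overline B$ as defined in Definition~\ref{def:Aoverline} is genuinely the ``$\overline A$'' of the cyclically permuted triple $B,C,A$, i.e. that the element $W'$ attached to $B$ in that triple agrees with the $W'$ of Definition~\ref{def:WWW}; this is immediate from the symmetric form of Definition~\ref{def:WWW} and Lemma~\ref{lem:ABCcom}.
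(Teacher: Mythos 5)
Your proposal is correct and follows essentially the same route as the paper: both arguments reduce the first identity to the diagonal blocks $E_i(\cdot)E_i$, evaluate them via Lemma \ref{lem:EWE}(iii), and invoke the first equation of Lemma \ref{cor:threea}, with the other two lines obtained by the analogous (cyclically symmetric) computation. Your explicit check that the $t_i$ of Definition \ref{def:WWW} match those in Lemma \ref{lem:EWE}, and that the cyclic relabeling sends $W'$ and the $\alpha$'s correctly, is just a careful spelling-out of the paper's "similarly verified."
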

\begin{proof} We verify the top equations in the proposition
statement. Since $A,\overline A$ commute it suffices to
show
\begin{eqnarray}
{\overline A}  \biggl(I - \frac{A}{q+q^{-1}}\biggr)
= (\alpha_b - \alpha_c)I.
\label{eq:tocheck}
\end{eqnarray}
Let $\Delta$ denote the left-hand side of
(\ref{eq:tocheck}) 
minus the right-hand side of
(\ref{eq:tocheck}).
We show $\Delta=0$.
We have
$\Delta \in \langle A \rangle$
by 
Lemma \ref{lem:AbarAbrack},
so
$\Delta = \sum_{i=0}^d E_i \Delta E_i$.
We show $E_i \Delta E_i = 0 $ for $0 \leq i \leq d$.
Let $i$ be given. We have
\begin{eqnarray}
E_i \Delta E_i &=& E_i \overline A E_i 
\biggl(1-\frac{\theta_i}{q+q^{-1}} \biggr)
-(\alpha_b-\alpha_c) E_i
\nonumber
\\
&=& 
E_i (W^{-1}BW-C)E_i \biggl(1-\frac{\theta_i}{q+q^{-1}}\biggr)
-(\alpha_b-\alpha_c) E_i.
\label{eq:rhs}
\end{eqnarray}
\noindent Evaluating 
(\ref{eq:rhs})
using 
Lemma
\ref{lem:EWE}(iii) and the first equation
in Lemma
\ref{cor:threea}, we routinely obtain
$E_i\Delta E_i=0$.
We have shown (\ref{eq:tocheck}), and we verified the
top equations in the proposition statement. 
The remaining equations
are similarly verified.
\end{proof}

\noindent Consider the top equations in
Proposition
\ref{prop:Abarinv}. 
As we seek to describe $\overline A$, it is tempting
to invert the element $I-A/(q+q^{-1})$. However this element
might not be invertible. We now investigate this
possibility.

\begin{lemma} 
\label{lem:AAinv}
The following are equivalent:
\begin{enumerate}
\item[\rm (i)] $I-\frac{A}{q+q^{-1}}$ is invertible;
\item[\rm (ii)] $a\not=q^{d+1}$ and $a\not=q^{-d-1}$.
\end{enumerate}
\end{lemma}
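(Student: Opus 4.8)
The plan is to reduce the claim to a statement about the eigenvalues $\theta_i$ of $A$, using the fact that $A$ is multiplicity-free. Since $A$ is diagonalizable with eigenvalues $\lbrace \theta_i \rbrace_{i=0}^d$, the element $I - A/(q+q^{-1})$ is diagonalizable with eigenvalues $1 - \theta_i/(q+q^{-1}) = (q+q^{-1}-\theta_i)/(q+q^{-1})$ for $0 \leq i \leq d$. Hence $I - A/(q+q^{-1})$ is invertible if and only if $\theta_i \neq q+q^{-1}$ for all $0 \leq i \leq d$.

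First I would record that the $\lbrace \theta_i \rbrace_{i=0}^d$ are mutually distinct; this holds by the lemma following Definition \ref{lem:eigvalform} (or directly from Lemma \ref{lem2} and Assumption \ref{lem:nonz}). This puts us in the situation of Lemma \ref{lem:thetaqq}, whose three cases exactly determine when $q+q^{-1}$ appears among the $\theta_i$: it appears (as $\theta_0$) precisely when $a = q^{d+1}$, it appears (as $\theta_d$) precisely when $a = q^{-d-1}$, and it does not appear at all when $a \neq q^{d+1}$ and $a \neq q^{-d-1}$. Since $q^{d+1} = q^{-d-1}$ would force $q^{2d+2}=1$; regardless, the disjunction ``$a \neq q^{d+1}$ and $a \neq q^{-d-1}$'' is the negation of ``$a = q^{d+1}$ or $a = q^{-d-1}$,'' so Lemma \ref{lem:thetaqq} gives a clean equivalence.

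Putting these together: $I - A/(q+q^{-1})$ is invertible $\iff$ none of the $\theta_i$ equals $q+q^{-1}$ $\iff$ ($a \neq q^{d+1}$ and $a \neq q^{-d-1}$), which is (ii). I should note that $q+q^{-1} \neq 0$ since $q^2 \neq -1$ (indeed $q^4 \neq 1$), so dividing by $q+q^{-1}$ is legitimate throughout.

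I do not anticipate a genuine obstacle here: the only point requiring minor care is confirming that the hypotheses of Lemma \ref{lem:thetaqq} (mutual distinctness of the $\theta_i$) are in force under Assumption \ref{lem:nonz}, and that $q+q^{-1} \neq 0$. After that, the argument is a direct translation of the spectral condition on $I - A/(q+q^{-1})$ into a condition on the eigenvalues of $A$, followed by a citation of Lemma \ref{lem:thetaqq}.
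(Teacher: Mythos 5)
Your proof is correct and follows essentially the same route as the paper, which simply invokes Lemma \ref{lem:thetaqq}: you spell out the spectral reduction (invertibility of $I-A/(q+q^{-1})$ is equivalent to $\theta_i\neq q+q^{-1}$ for all $i$) and then apply that lemma, with appropriate care about the distinctness hypothesis and $q+q^{-1}\neq 0$. No gaps.
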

\begin{proof}
Use Lemma \ref{lem:thetaqq}.
\end{proof}

\noindent We now describe $\overline A$. Similar
descriptions apply to $\overline B$ and $\overline C$.

\begin{proposition}
\label{prop:oA}
The element $\overline A$ is described as follows.
\begin{enumerate}
\item[\rm (i)] Assume $a=q^{d+1}$. Then
\begin{eqnarray}
{\overline A} = (b-c)(b-c^{-1})b^{-1} \lbrack d+1 \rbrack_q E_0.
\label{eq:Aover}
\end{eqnarray}
\item[\rm (ii)] Assume $a=q^{-d-1}$. Then
\begin{eqnarray}
{\overline A} = (b-c)(b-c^{-1})b^{-1} \lbrack d+1 \rbrack_q E_d.
\label{eq:Aover2}
\end{eqnarray}
\item[\rm (iii)] Assume $a\not=q^{d+1}$ and
 $a\not=q^{-d-1}$. Then
\begin{eqnarray}
{\overline A} =  (\alpha_b -\alpha_c)\biggl(I-\frac{A}{q+q^{-1}}\biggr)^{-1}.
\label{eq:Aover3}
\end{eqnarray}
\end{enumerate}
\end{proposition}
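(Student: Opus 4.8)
The plan is to deduce all three parts from Proposition~\ref{prop:Abarinv}. Part (iii) is immediate: if $a\not=q^{d+1}$ and $a\not=q^{-d-1}$, then by Lemma~\ref{lem:AAinv} the element $I-A/(q+q^{-1})$ is invertible, so multiplying the relation $\overline A(I-A/(q+q^{-1}))=(\alpha_b-\alpha_c)I$ of Proposition~\ref{prop:Abarinv} on the right by $(I-A/(q+q^{-1}))^{-1}$ gives (\ref{eq:Aover3}).

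For part (i), assume $a=q^{d+1}$. The key observation is that $\alpha_b=\alpha_c$ in this case: the factor $(a-q^{d+1})$ appears on the right-hand side of (\ref{eq:diff2}) in Lemma~\ref{lem:alphadif}, and it vanishes. Hence the relation of Proposition~\ref{prop:Abarinv} becomes $\overline A(I-A/(q+q^{-1}))=0$. By Lemma~\ref{lem:AbarAbrack} we have $\overline A\in\langle A\rangle$, so write $\overline A=\sum_{i=0}^d\tau_iE_i$ with $\tau_i\in\mathbb F$. Multiplying the previous display by $E_i$ and using $AE_i=\theta_iE_i$ yields
\begin{eqnarray*}
\tau_i\biggl(1-\frac{\theta_i}{q+q^{-1}}\biggr)E_i=0 \qquad\qquad (0\leq i\leq d).
\end{eqnarray*}
By Lemma~\ref{lem:thetaqq}(i) we have $\theta_0=q+q^{-1}$; since $\lbrace\theta_i\rbrace_{i=0}^d$ are mutually distinct this forces $1-\theta_i/(q+q^{-1})\not=0$ for $1\leq i\leq d$. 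Therefore $\tau_i=0$ for $1\leq i\leq d$, so $\overline A=\tau_0E_0$. Finally, taking the trace and using ${\rm tr}(E_0)=1$ together with Corollary~\ref{cor:traceAbar} gives $\tau_0=(b-c)(b-c^{-1})b^{-1}\lbrack d+1\rbrack_q$, which is (\ref{eq:Aover}).

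Part (ii) follows by the same argument with the roles of $E_0$ and $E_d$ interchanged: when $a=q^{-d-1}$, the factor $(a-q^{-d-1})$ in (\ref{eq:diff2}) again forces $\alpha_b=\alpha_c$, Lemma~\ref{lem:thetaqq}(ii) gives $\theta_d=q+q^{-1}$ with $1-\theta_i/(q+q^{-1})\not=0$ for $0\leq i\leq d-1$, and one concludes $\overline A=\tau_dE_d$ with $\tau_d$ determined by the trace exactly as above, giving (\ref{eq:Aover2}). (Parts (i) and (ii) cannot occur simultaneously, since $a=q^{d+1}=q^{-d-1}$ would force $\theta_0=\theta_d$, so the three cases are genuinely exhaustive.) I do not expect a real obstacle here; the only point requiring attention is the observation that in the degenerate cases the scalar $\alpha_b-\alpha_c$ vanishes, which is what makes the equation of Proposition~\ref{prop:Abarinv} consistent and leaves the trace as the sole remaining ingredient needed to pin down $\overline A$.
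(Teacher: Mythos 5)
Your proposal is correct and follows essentially the same route as the paper: part (iii) by inverting $I-A/(q+q^{-1})$ via Lemma \ref{lem:AAinv}, and parts (i), (ii) by noting $\alpha_b=\alpha_c$ (from Lemma \ref{lem:alphadif}), expanding $\overline A$ in the basis $\lbrace E_i\rbrace_{i=0}^d$ of $\langle A\rangle$, killing all but one coefficient via Lemma \ref{lem:thetaqq} and distinctness of the eigenvalues, and pinning down the surviving coefficient by the trace formula of Corollary \ref{cor:traceAbar}. The remark that the two degenerate cases cannot occur simultaneously is a harmless extra not needed in the paper's argument.
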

\begin{proof} (i) 
Recall that
$\lbrace E_i \rbrace_{i=0}^d$ is a basis for
$\langle A \rangle$, and
$\overline A \in \langle A \rangle$ by Lemma
\ref{lem:AbarAbrack}.
So there exist scalars $\lbrace s_i \rbrace_{i=0}^d$ in $\mathbb F$
such that $\overline A = \sum_{i=0}^d s_i E_i$.
By Lemma
\ref{lem:alphadif} we have
$\alpha_b = \alpha_c$, so
the first equation in Proposition
\ref{prop:Abarinv} becomes
\begin{eqnarray}
\overline A \biggl(1- \frac{A}{q+q^{-1}}\biggr) = 0.
\label{eq:prodz}
\end{eqnarray}
From 
(\ref{eq:prodz}) 
we obtain
\begin{eqnarray*}
s_i \biggl(1-\frac{\theta_i}{q+q^{-1}} \biggr) = 0
\qquad \qquad (0 \leq i \leq d).
\end{eqnarray*}
In the above equation, 
for $1 \leq i \leq d$
the coefficient of $s_i$ is nonzero
by Lemma
\ref{lem:thetaqq}(i) and 
$\theta_i \not=\theta_0$.
Consequently $s_i=0$. By these comments $\overline A = s_0 E_0$.
In this equation, take the trace of each side and
use
${\rm tr}(E_0)=1$ to obtain
$s_0 = {\rm tr}(\overline A)$.
This and
Corollary
\ref{cor:traceAbar} imply
(\ref{eq:Aover}).
\\
\noindent (ii) Similar to the proof of (i) above.
\\
\noindent (iii) By 
the first equation in Proposition
\ref{prop:Abarinv}, along with
Lemma \ref{lem:AAinv}.
\end{proof}

\noindent Here is another view of $\overline A$.

\begin{proposition} The element $\overline A$ is described as follows.
\begin{enumerate}
\item[\rm (i)] Assume $a=q^{d+1}$. Then
$\overline A$ is equal to
\begin{eqnarray}
\label{eq:p0}
(b-c)(b-c^{-1})b^{-1} \lbrack d+1\rbrack_q 
\end{eqnarray}
times
\begin{eqnarray}
\label{eq:pp}
\sum_{i=0}^d
\frac{(A-\theta_0 I)(A-\theta_1 I) \cdots (A-\theta_{i-1} I)}
{
(\theta_0-\theta_1)
(\theta_0-\theta_2)
\cdots
(\theta_0-\theta_i)
}.
\end{eqnarray}
\item[\rm (ii)] Assume $a=q^{-d-1}$. Then
$\overline A$ is equal to
\begin{eqnarray}
\label{eq:p1}
(b-c)(b-c^{-1})b^{-1} \lbrack d+1\rbrack_q 
\end{eqnarray}
times
\begin{eqnarray}
\label{eq:p2}
\frac{(A-\theta_0 I)(A-\theta_1 I) \cdots (A-\theta_{d-1} I)}
{
(\theta_d -\theta_0)
(\theta_d -\theta_1)
\cdots
(\theta_d-\theta_{d-1})
}.
\end{eqnarray}
\item[\rm (iii)] Assume $a \not=q^{d+1}$ and
$a \not=q^{-d-1}$.
 Then
$\overline A$ is equal to
\begin{eqnarray}
\label{eq:p3}
\frac{(a-q^{-d-1})(b-c)(b-c^{-1})b^{-1}q^d}{a-q^{d-1}}
\end{eqnarray}
times 
\begin{eqnarray}
\label{eq:p4}
\sum_{i=0}^d 
\frac{(A-\theta_0 I)(A-\theta_1 I) \cdots (A-\theta_{i-1} I)}
{
(q+q^{-1}-\theta_1)
(q+q^{-1}-\theta_2)
\cdots (q+q^{-1}-\theta_i)}.
\end{eqnarray}
\end{enumerate}
\end{proposition}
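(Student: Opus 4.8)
The plan is to deduce each of (i)--(iii) from the corresponding part of Proposition \ref{prop:oA}, by rewriting the relevant element of $\langle A\rangle$ in the basis (\ref{eq:taubasis}).

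For (i), Proposition \ref{prop:oA}(i) gives $\overline A=(b-c)(b-c^{-1})b^{-1}\lbrack d+1 \rbrack_q E_0$, so it is enough to insert the expansion of $E_0$ in the basis (\ref{eq:taubasis}); this is exactly (\ref{eq:Eitau}) with $i=0$, whose right side is the sum (\ref{eq:pp}). For (ii), Proposition \ref{prop:oA}(ii) gives $\overline A=(b-c)(b-c^{-1})b^{-1}\lbrack d+1 \rbrack_q E_d$, and in (\ref{eq:Eitau}) with $i=d$ only the $j=d$ summand survives, which is (\ref{eq:p2}). Thus (i) and (ii) are immediate once these substitutions are carried out.

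For (iii), Proposition \ref{prop:oA}(iii) gives
\begin{eqnarray*}
\overline A=(\alpha_b-\alpha_c)\biggl(I-\frac{A}{q+q^{-1}}\biggr)^{-1}=(\alpha_b-\alpha_c)(q+q^{-1})\bigl((q+q^{-1})I-A\bigr)^{-1},
\end{eqnarray*}
which is legitimate by Lemma \ref{lem:AAinv}. The substantive step is then to expand $R:=\bigl((q+q^{-1})I-A\bigr)^{-1}$ in the basis (\ref{eq:taubasis}). I would do this in one of two equivalent ways. First, write $R=\sum_{i=0}^{d}(q+q^{-1}-\theta_i)^{-1}E_i$, substitute (\ref{eq:Eitau}), interchange the order of summation, and recognize the inner sum as the partial-fraction evaluation $\sum_{i=0}^{j}\bigl((q+q^{-1}-\theta_i)\prod_{0\le k\le j,\,k\ne i}(\theta_i-\theta_k)\bigr)^{-1}=\prod_{k=0}^{j}(q+q^{-1}-\theta_k)^{-1}$. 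Alternatively, use $\prod_{i=0}^{d}(A-\theta_i I)=0$ together with the polynomial identity $\frac{f(x)-f(z)}{x-z}=\sum_{i=0}^{d}\bigl(\prod_{k<i}(x-\theta_k)\bigr)\bigl(\prod_{k>i}(z-\theta_k)\bigr)$ for $f(x)=\prod_{k=0}^{d}(x-\theta_k)$, evaluated at $x=A$, $z=q+q^{-1}$. Either route yields
\begin{eqnarray*}
R=\frac{1}{q+q^{-1}-\theta_0}\sum_{i=0}^{d}\frac{(A-\theta_0 I)(A-\theta_1 I)\cdots(A-\theta_{i-1}I)}{(q+q^{-1}-\theta_1)(q+q^{-1}-\theta_2)\cdots(q+q^{-1}-\theta_i)},
\end{eqnarray*}
so $\overline A$ equals the sum (\ref{eq:p4}) times $(\alpha_b-\alpha_c)(q+q^{-1})/(q+q^{-1}-\theta_0)$. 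It then remains to check that this scalar equals (\ref{eq:p3}): substitute (\ref{eq:diff2}) for $\alpha_b-\alpha_c$ and Lemma \ref{lem:qqth} at $i=0$ for $q+q^{-1}-\theta_0=-(a-q^{d+1})(a-q^{d-1})q^{-d}a^{-1}$, cancel the common factor $a-q^{d+1}$, and finish using the elementary identity $(b-c)(b-c^{-1})b^{-1}=(b+b^{-1})-(c+c^{-1})=-(c-b)(c-b^{-1})c^{-1}$.

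The bookkeeping with empty products and the closing scalar computation are routine. The one place that requires care is the expansion of $R$: one must make the partial-fraction (equivalently, $\frac{f(x)-f(z)}{x-z}$) identity telescope so that exactly $\prod_{k=1}^{i}(q+q^{-1}-\theta_k)$, rather than $\prod_{k=0}^{i}(q+q^{-1}-\theta_k)$, is left in the denominator, with the leftover factor $(q+q^{-1}-\theta_0)^{-1}$ being absorbed into the prefactor. I expect that to be the main obstacle.
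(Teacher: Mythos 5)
Your proposal is correct and is essentially the paper's argument: parts (i) and (ii) are proved exactly as in the paper by evaluating $E_0$ and $E_d$ via (\ref{eq:Eitau}), and your closing scalar computation in (iii) (Lemma \ref{lem:qqth} at $i=0$ together with (\ref{eq:diff2}) and the identity $(b-c)(b-c^{-1})b^{-1}=-(c-b)(c-b^{-1})c^{-1}$) is the same evaluation the paper performs for its leading coefficient. The only difference in (iii) is packaging: the paper writes $\overline A=\sum_i\gamma_i(A-\theta_0I)\cdots(A-\theta_{i-1}I)$ with undetermined coefficients, multiplies by $I-A/(q+q^{-1})$, and reads off $\gamma_0\bigl(1-\theta_0/(q+q^{-1})\bigr)=\alpha_b-\alpha_c$ and $\gamma_{i-1}=\gamma_i(q+q^{-1}-\theta_i)$ from Proposition \ref{prop:Abarinv}, whereas you invert $\bigl((q+q^{-1})I-A\bigr)$ and expand the resolvent in the basis (\ref{eq:taubasis}) by partial fractions; since $\bigl((q+q^{-1})I-A\bigr)(A-\theta_0I)\cdots(A-\theta_{i-1}I)=(q+q^{-1}-\theta_i)(A-\theta_0I)\cdots(A-\theta_{i-1}I)-(A-\theta_0I)\cdots(A-\theta_iI)$, the telescoping you were worried about is exactly the paper's recursion, so that step goes through without difficulty.
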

\begin{proof} 
(i) In line
(\ref{eq:Aover}) evaluate $E_0$
using
(\ref{eq:Eitau}).
\\
\noindent (ii)  
In line
(\ref{eq:Aover2}) evaluate $E_d$
using
(\ref{eq:Eitau}).
\\
\noindent (iii) Recall the basis  
(\ref{eq:taubasis}) for $\langle A \rangle$. Since
$\overline A \in \langle A \rangle$, 
there exist
scalars $\lbrace \gamma_i \rbrace_{i=0}^d$ in 
$\mathbb F$ such that
\begin{eqnarray*}
\overline A = \sum_{i=0}^d \gamma_i 
(A-\theta_0I) (A-\theta_1I)\cdots (A-\theta_{i-1}I).
%%\label{eq:barAtau}
\end{eqnarray*}
In this equation multiply each side by $I-A/(q+q^{-1})$.
Evaluate the resulting equation using
Proposition 
\ref{prop:Abarinv}, to obtain
\begin{eqnarray}
\alpha_b - \alpha_c = \gamma_0 
 \biggl(1-\frac{\theta_0}{q+q^{-1}} \biggr) 
\label{eq:gamma0}
\end{eqnarray}
and
$\gamma_{i-1}= \gamma_{i}(q+q^{-1}-\theta_i)$ for $1 \leq i \leq d$.
Evaluating (\ref{eq:gamma0}) using
 Lemma
\ref{lem:qqth}
and (\ref{eq:diff2}), we find that $\gamma_0$
is equal to
(\ref{eq:p3}). The result follows.
\end{proof}

\section{Some results about conjugation}

\noindent We continue to discuss the Leonard triple
$A,B,C$ on $V$ with $q$-Racah type and Huang data $(a,b,c,d)$.
Recall the maps
$W$, $W'$, $W''$ from Definition
\ref{def:WWW}. In this section we work out what happens
if one of $A,B,C$ is conjugated by one of
$W^{\pm 1}$, $(W')^{\pm 1}$, $(W'')^{\pm 1}$.
We start with some observations about $W$; similar observations
apply to $W'$ and $W''$.

\begin{lemma}
\label{lem:WBWi}
We have
\begin{eqnarray}
&& 
WBW^{-1} - W^{-1} BW = \frac{AB-BA}{q-q^{-1}},
\label{eq:WBW}
\\
&& 
WCW^{-1} - W^{-1} CW = \frac{AC-CA}{q-q^{-1}}.
\label{eq:WCW}
\end{eqnarray}
\end{lemma}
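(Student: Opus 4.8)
\textbf{Proof proposal for Lemma~\ref{lem:WBWi}.}
The plan is to verify equation~(\ref{eq:WBW}) by expanding both sides in terms of the primitive idempotents $\{E_i\}_{i=0}^d$ of $A$ and checking equality block by block, i.e.\ comparing $E_i(\cdot)E_j$ for all $0\le i,j\le d$; equation~(\ref{eq:WCW}) is then handled identically with $C$ in place of $B$. First I would note that from Definition~\ref{def:WWW} and Lemma~\ref{lem:Wi} we have $W=\sum_i w_i E_i$ and $W^{-1}=\sum_i w_i^{-1}E_i$ with $w_i=(-1)^i a^{-i}q^{i(d-i)}$, and $A=\sum_i\theta_i E_i$. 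Since $E_i X E_j$ for $X\in\{B,C\}$ vanishes when $|i-j|>1$ by Lemma~\ref{lem:tripleprod}, and trivially the diagonal blocks $E_i(WBW^{-1}-W^{-1}BW)E_i$ and $E_i(AB-BA)E_i$ both vanish (the first because conjugation by the scalars $w_i/w_i=1$ and $w_i^{-1}/w_i^{-1}=1$ cancels, the second because $E_iABE_i-E_iBAE_i=\theta_iE_iBE_i-\theta_iE_iBE_i=0$), the only blocks that matter are $|i-j|=1$.

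For $|i-j|=1$ I would compute each side as a scalar multiple of $E_iBE_j$. The left side gives
\begin{eqnarray*}
E_i(WBW^{-1}-W^{-1}BW)E_j = \Bigl(\frac{w_i}{w_j}-\frac{w_j}{w_i}\Bigr)E_iBE_j,
\end{eqnarray*}
while the right side gives
\begin{eqnarray*}
E_i\,\frac{AB-BA}{q-q^{-1}}\,E_j = \frac{\theta_i-\theta_j}{q-q^{-1}}\,E_iBE_j.
\end{eqnarray*}
So it remains to check the scalar identity $w_i/w_j - w_j/w_i = (\theta_i-\theta_j)/(q-q^{-1})$ whenever $|i-j|=1$. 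By symmetry it suffices to take $j=i-1$ with $1\le i\le d$. Here the ratio $w_i/w_{i-1} = -a^{-1}q^{d-2i+1}$ (this is exactly Lemma~\ref{lem:ti}(ii) applied to $t_i=w_i$, which is the $\varepsilon=1$ instance of Lemma~\ref{lem:ti}(iii)), so $w_i/w_{i-1}-w_{i-1}/w_i = -a^{-1}q^{d-2i+1}+a q^{2i-d-1}$. On the other side, $\theta_i-\theta_{i-1} = (1-q^{-2})(aq^{2i-d}-a^{-1}q^{d-2i+2})$ by Lemma~\ref{lem1}, and dividing by $q-q^{-1}$ and simplifying $(1-q^{-2})/(q-q^{-1}) = q^{-1}$ yields $q^{-1}(aq^{2i-d}-a^{-1}q^{d-2i+2}) = aq^{2i-d-1}-a^{-1}q^{d-2i+1}$, which matches.

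The argument is essentially routine once it is organized around the idempotent decomposition; the only mild subtlety is remembering that $W,W^{-1}\in\langle A\rangle$ so that conjugation acts blockwise, and correctly reducing the scalar identity to the known ratio $w_i/w_{i-1}=-a^{-1}q^{d-2i+1}$ and to Lemma~\ref{lem1}. I do not expect a genuine obstacle here; the potential pitfall is purely bookkeeping in the exponents of $q$ and the powers of $a$. For~(\ref{eq:WCW}) one repeats the computation verbatim, using that $E_iCE_j$ is nonzero for $|i-j|=1$ and zero for $|i-j|>1$ (again Lemma~\ref{lem:tripleprod}), and that the diagonal block of $AC-CA$ vanishes for the same reason as before; the scalar identity to be checked is identical since it involves only the $w_i$ and the $\theta_i$, both attached to $A$.
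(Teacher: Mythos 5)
Your proof is correct, and it takes a genuinely different route from the paper. You work entirely in the eigenbasis of $A$: writing $W=\sum_i w_iE_i$, $A=\sum_i\theta_iE_i$, you compare the two sides block by block, using Lemma~\ref{lem:tripleprod} to kill the blocks with $|i-j|>1$ and reducing the $|i-j|=1$ blocks to the scalar identity $w_i/w_{i-1}-w_{i-1}/w_i=(\theta_i-\theta_{i-1})/(q-q^{-1})$, which indeed follows from the ratio $w_i/w_{i-1}=-a^{-1}q^{d-2i+1}$ (Lemma~\ref{lem:ti}) and Lemma~\ref{lem1}; your exponent bookkeeping checks out, and the antisymmetry remark legitimately disposes of the case $j=i+1$. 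The paper instead argues relation-theoretically: it computes $W^{-1}BW=\overline A+C$ and $WBW^{-1}=B-\overline A$ conjugated through the identities (\ref{eq:B}), (\ref{eq:C}), and then invokes Proposition~\ref{prop:Abarinv} to cancel the $A\overline A$ terms. What your approach buys is economy of prerequisites and transparency: it needs only the explicit form of $W$, the eigenvalue formula, and the tridiagonality of $B$ and $C$ with respect to the $A$-eigenbasis, so it proves in one stroke the more general fact that $WXW^{-1}-W^{-1}XW=(AX-XA)/(q-q^{-1})$ for any $X$ with $E_iXE_j=0$ when $|i-j|>1$, and it does not use $\overline A$ or Proposition~\ref{prop:Abarinv} at all. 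What the paper's route buys is that it stays at the level of the defining relations (no appeal to the eigenstructure beyond what is already packaged in $\overline A$), which matches the style of the subsequent conjugation computations in Propositions~\ref{prop:ABCtable} and~\ref{prop:ABCtableW2}.
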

\begin{proof} Concerning 
(\ref{eq:WBW}), we have
\begin{eqnarray}
W^{-1} B W &=& \overline A + C
\nonumber
\\
&=& \overline A + \alpha_c I - \frac{qAB-q^{-1}BA}{q^2-q^{-2}}
\label{eq:WBW1}
\end{eqnarray}
and also
\begin{eqnarray}
WBW^{-1} &=& W \biggl( \alpha_b I - \frac{qCA-q^{-1}AC}{q^2-q^{-2}}\biggr) W^{-1}
\nonumber
\\
&=& \alpha_b I - \frac{q WCW^{-1} A - q^{-1} A WCW^{-1}}{q^2-q^{-2}}
\nonumber
\\
&=& \alpha_b I - \frac{q (B-\overline A) A - q^{-1} A(B-\overline A)}{q^2-q^{-2}}
\nonumber
\\
&=& \alpha_b I - \frac{qBA-q^{-1} AB}{q^2-q^{-2}} + \frac{A \overline A}{q+q^{-1}}.
\label{eq:WBW2}
\end{eqnarray}
Now subtract 
(\ref{eq:WBW1}) from
(\ref{eq:WBW2}), and evaluate the result using the first equation
in Proposition
\ref{prop:Abarinv}.
This yields
(\ref{eq:WBW}). Concerning
(\ref{eq:WCW}), we have
\begin{eqnarray}
W C W^{-1} &=& B- \overline A
\nonumber
\\
&=& 
\alpha_b I - \frac{qCA-q^{-1}AC}{q^2-q^{-2}}
-\overline A 
\label{eq:WCW1}
\end{eqnarray}
and also
\begin{eqnarray}
W^{-1}CW &=& W^{-1} \biggl(\alpha_c I - \frac{qAB-q^{-1}BA}{q^2-q^{-2}}\biggr) W
\nonumber
\\
&=& \alpha_c I - \frac{q A W^{-1}BW - q^{-1}  W^{-1}BWA}{q^2-q^{-2}}
\nonumber
\\
&=& \alpha_c I - \frac{q A (\overline A+C)  - q^{-1} (\overline A+C)A }{q^2-q^{-2}}
\nonumber
\\
&=& \alpha_c I - \frac{qAC-q^{-1} CA}{q^2-q^{-2}} - \frac{A \overline A}{q+q^{-1}}.
\label{eq:WCW2}
\end{eqnarray}
Now subtract 
(\ref{eq:WCW2}) from
(\ref{eq:WCW1}), and evaluate the result using the first equation
in Proposition
\ref{prop:Abarinv}.
This yields
(\ref{eq:WCW}).
\end{proof}

\noindent Let $X$ denote one of
$A$,
$B$,
$C$.
In the next result we conjugate $X$ by
each of $W^{\pm 1}$,  $(W')^{\pm 1}$, $(W'')^{\pm 1}$.

\begin{proposition}
\label{prop:ABCtable}
We have
\bigskip

\centerline{
\begin{tabular}[t]{c|ccc}
   $X$ & $A$ & $B$ & $C$ 
\\
   $W^{-1} X W$ & $A$ & $C+\overline A$ & $B+\frac{CA-AC}{q-q^{-1}}-\overline A$
   \\
   $WXW^{-1}$ & $A$  &
   $C+\frac{AB-BA}{q-q^{-1}}+\overline A$
   &
   $B-\overline A$
\\
   $(W')^{-1} X W'$ &
    $C+\frac{AB-BA}{q-q^{-1}}-\overline B$ &
    $B$ & $A+\overline B$
 \\ 
  $W'X(W')^{-1}$ &
   $C-\overline B$ &
    $B$  & $A+\frac{BC-CB}{q-q^{-1}}+\overline B$
\\
   $(W'')^{-1} X W''$ &
    $B+\overline C$ &
    $A+\frac{BC-CB}{q-q^{-1}}-\overline C$
    & $C$  
 \\ 
  $W''X(W'')^{-1}$ &
    $B+\frac{CA-AC}{q-q^{-1}}+\overline C$
   &
   $A-\overline C$ &
    $C$  
\\
           \end{tabular}}

\end{proposition}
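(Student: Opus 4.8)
The plan is to verify the two rows of the table involving $W^{\pm 1}$ by a short computation, and then to read off the remaining four rows from the cyclic symmetry in Lemma \ref{lem:sym}. For the $W^{\pm 1}$ rows, the column $X=A$ is immediate: $A$ commutes with $W$ by Lemma \ref{lem:ABCcom}(i), so $W^{-1}AW=A=WAW^{-1}$. The entry $W^{-1}BW=C+\overline A$ is exactly Definition \ref{def:Aoverline}, and the entry $WCW^{-1}=B-\overline A$ is the first equation of Lemma \ref{lem:Aoverline2}. So four of the six entries in these two rows are already in hand.

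The two entries that remain, $WBW^{-1}$ and $W^{-1}CW$, come straight out of Lemma \ref{lem:WBWi}. First I would rewrite $(\ref{eq:WBW})$ as $WBW^{-1}=W^{-1}BW+(AB-BA)/(q-q^{-1})$ and substitute $W^{-1}BW=C+\overline A$, which gives $WBW^{-1}=C+(AB-BA)/(q-q^{-1})+\overline A$, the claimed $(WXW^{-1},\,X=B)$ entry. Similarly I would rewrite $(\ref{eq:WCW})$ as $W^{-1}CW=WCW^{-1}-(AC-CA)/(q-q^{-1})$ and substitute $WCW^{-1}=B-\overline A$, giving $W^{-1}CW=B+(CA-AC)/(q-q^{-1})-\overline A$, the claimed $(W^{-1}XW,\,X=C)$ entry. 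This finishes the first two rows of the table.

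For the last four rows, I would invoke the cyclic shift of Lemma \ref{lem:sym}. Applying the shift that sends the $q$-Racah Leonard triple $A,B,C$ (Huang data $(a,b,c,d)$) to the $q$-Racah Leonard triple $B,C,A$ (Huang data $(b,c,a,d)$) sends the primitive idempotents $E_i$ of $A$ to the primitive idempotents $E'_i$ of $B$ with the matching standard ordering, hence sends $W\mapsto W'$ by Definition \ref{def:WWW}, and consequently $\overline A\mapsto\overline B$ by Definition \ref{def:Aoverline}. Rewriting the two rows for $W^{\pm 1}$ under the substitution $(A,B,C)\mapsto(B,C,A)$, $W\mapsto W'$, $\overline A\mapsto\overline B$ then produces verbatim the two rows for $(W')^{\pm 1}$; a second application of the shift produces the two rows for $(W'')^{\pm 1}$.

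There is no real obstacle here once Lemma \ref{lem:WBWi} is available: the computation in the first two rows is a two-line substitution, and the rest is symmetry. The only point deserving a word of care is the bookkeeping in the symmetry step, namely checking that under $A,B,C\mapsto B,C,A$ one really gets $W\mapsto W'$ (and not $(W')^{-1}$ or a reordered variant), which is why I would spell out that the shift matches up the standard orderings of the eigenvalues and hence the idempotents.
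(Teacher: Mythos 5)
Your proposal is correct and follows essentially the same route as the paper: the $W^{\pm 1}$ rows are read off from Definition \ref{def:Aoverline}, Lemma \ref{lem:Aoverline2} and Lemma \ref{lem:WBWi}, and the remaining rows come from the cyclic symmetry (the paper handles these via the ``similar observations apply to $W'$ and $W''$'' convention, which your Lemma \ref{lem:sym} bookkeeping simply makes explicit).
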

\begin{proof} Use Definition
\ref{def:Aoverline} and Lemmas
\ref{lem:AbarAbrack},
\ref{lem:Aoverline2},
\ref{lem:WBWi}.
\end{proof}

\section{More results about conjugation}

\noindent We continue to discuss the Leonard triple
$A,B,C$ on $V$ with  $q$-Racah type and Huang data $(a,b,c,d)$.
 Recall the maps
$W$, $W'$, $W''$ from Definition
\ref{def:WWW}. In this section we work out what happens
if one of $A,B,C$ is conjugated by one of
$W^{\pm 2}$, $(W')^{\pm 2}$, $(W'')^{\pm 2}$.

\begin{lemma}
\label{def:W2W2W2}
We have
\begin{eqnarray}
&&
W^2 = \sum_{i=0}^d a^{-2i} q^{2i(d-i)} E_i,
\qquad \qquad 
W^{-2} = \sum_{i=0}^d a^{2i} q^{-2i(d-i)} E_i,
\label{eq:W2form}
\\
&&
(W')^2 = \sum_{i=0}^d b^{-2i} q^{2i(d-i)} E'_i,
\qquad \qquad
(W')^{-2} = \sum_{i=0}^d b^{2i} q^{-2i(d-i)} E'_i,
\label{eq:Wp2form}
\\
&&(W'')^2 = \sum_{i=0}^d c^{-2i} q^{2i(d-i)} E''_i,
\qquad \qquad 
(W'')^{-2} = \sum_{i=0}^d c^{2i} q^{-2i(d-i)} E''_i.
\label{eq:Wpp2form}
\end{eqnarray}
\end{lemma}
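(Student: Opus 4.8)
The plan is to compute each of the six expressions by directly squaring its defining sum, using the orthogonality of the primitive idempotents. First I would recall from Definition \ref{def:WWW} that $W = \sum_{i=0}^d (-1)^i a^{-i} q^{i(d-i)} E_i$, and from the linear-algebra preliminaries of Section 3 that the primitive idempotents of $A$ satisfy $E_i E_j = \delta_{i,j} E_i$ for $0 \le i,j \le d$. Hence for any scalars $\{t_i\}_{i=0}^d$ in $\mathbb F$, the element $X = \sum_{i=0}^d t_i E_i$ satisfies $X^2 = \sum_{i=0}^d t_i^2 E_i$, since every cross term $t_i t_j E_i E_j$ with $i \ne j$ vanishes.

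Next I would apply this observation with $t_i = (-1)^i a^{-i} q^{i(d-i)}$. Because $((-1)^i)^2 = 1$, we get $t_i^2 = a^{-2i} q^{2i(d-i)}$, which gives the first formula in (\ref{eq:W2form}). For $W^{-2}$ I would not invert anything directly, but instead start from the expression $W^{-1} = \sum_{i=0}^d (-1)^i a^{i} q^{-i(d-i)} E_i$ already recorded in Lemma \ref{lem:Wi}, square it by the same principle, and read off $W^{-2} = \sum_{i=0}^d a^{2i} q^{-2i(d-i)} E_i$. The four remaining identities (\ref{eq:Wp2form}) and (\ref{eq:Wpp2form}) then follow verbatim, replacing the pair $(a, E_i)$ by $(b, E'_i)$ and by $(c, E''_i)$ respectively, and using that $\{E'_i\}_{i=0}^d$ and $\{E''_i\}_{i=0}^d$ are likewise systems of mutually orthogonal primitive idempotents, this time for $B$ and for $C$.

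There is no genuine obstacle in this lemma: the only things to watch are the bookkeeping of the exponents of $a$ and $q$ and the sign cancellation $(-1)^{2i} = 1$, together with the idempotent relations, all of which are immediate from Section 3. If one preferred a consistency check rather than an independent computation of the ``$-2$'' cases, one could instead verify that $\bigl(\sum_{i=0}^d a^{-2i} q^{2i(d-i)} E_i\bigr)\bigl(\sum_{i=0}^d a^{2i} q^{-2i(d-i)} E_i\bigr) = \sum_{i=0}^d E_i = I$, but the direct squaring argument is cleaner and disposes of both signs at once.
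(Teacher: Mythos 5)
Your proof is correct and is essentially the argument the paper intends: its proof of this lemma is simply ``Use Definition \ref{def:WWW},'' and your expansion via the orthogonality $E_iE_j=\delta_{i,j}E_i$ (together with Lemma \ref{lem:Wi} for the inverses) is exactly the routine computation being left to the reader.
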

\begin{proof} Use
Definition
\ref{def:WWW}.
\end{proof}

\noindent We mention a result about $W^2$; similar results apply to
$(W')^2$ and $(W'')^2$.
Recall the commutator $\lbrack X,Y\rbrack = XY-YX$.

\begin{lemma} 
\label{lem:W22}
We have
\begin{eqnarray}
&&
\label{eq:W2BW2}
W^2 B W^{-2} + W^{-2} B W^2 =
2B + \frac{\lbrack A, \lbrack A,B\rbrack \rbrack}{(q-q^{-1})^2},
\\
&&
W^2 C W^{-2} + W^{-2} C W^2 = 2C + 
\frac{\lbrack A, \lbrack A, C\rbrack \rbrack}{(q-q^{-1})^2}.
\label{eq:W2CW2}
\end{eqnarray}
\end{lemma}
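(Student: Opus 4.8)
The plan is to obtain (\ref{eq:W2BW2}) and (\ref{eq:W2CW2}) from the order-one identities (\ref{eq:WBW}), (\ref{eq:WCW}) of Lemma \ref{lem:WBWi} by feeding them through a pair of inverse automorphisms. Introduce the $\mathbb F$-algebra automorphism $f$ of ${\rm End}(V)$ given by $f(X) = WXW^{-1}$; its inverse is $f^{-1}(X) = W^{-1}XW$, and $f^{\pm 2}(X) = W^{\pm 2} X W^{\mp 2}$. Since $W\in \langle A\rangle$ by Definition \ref{def:WWW}, the map $f$ fixes $A$; equivalently $A$ commutes with $W$ by Lemma \ref{lem:ABCcom}(i). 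Thus (\ref{eq:W2BW2}) is the assertion $f^2(B)+f^{-2}(B) = 2B + \lbrack A,\lbrack A,B\rbrack\rbrack/(q-q^{-1})^2$, and (\ref{eq:W2CW2}) is the analogous statement with $C$ in place of $B$.

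First I would rewrite (\ref{eq:WBW}) as $f(B) - f^{-1}(B) = \lbrack A,B\rbrack/(q-q^{-1})$ and apply $f$ to both sides. Because $f$ is an algebra automorphism with $f(A)=A$, the right-hand side becomes $\lbrack A, f(B)\rbrack/(q-q^{-1})$, while the left-hand side becomes $f^2(B) - B$; hence $f^2(B) - B = \lbrack A, f(B)\rbrack/(q-q^{-1})$. Applying instead $f^{-1}$ to (\ref{eq:WBW}) gives, in the same way, $B - f^{-2}(B) = \lbrack A, f^{-1}(B)\rbrack/(q-q^{-1})$. Subtracting the second identity from the first, using linearity of $\lbrack A,-\rbrack$, and invoking (\ref{eq:WBW}) once more,
\[
f^2(B) + f^{-2}(B) - 2B
= \frac{\lbrack A,\ f(B) - f^{-1}(B)\rbrack}{q-q^{-1}}
= \frac{\lbrack A,\lbrack A,B\rbrack\rbrack}{(q-q^{-1})^2},
\]
which is (\ref{eq:W2BW2}). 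Running the identical argument with (\ref{eq:WCW}) in place of (\ref{eq:WBW}) yields (\ref{eq:W2CW2}).

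I do not expect a genuine obstacle here: the only substantive point is to notice the device of passing the order-one identity through $f$ and $f^{-1}$, after which everything is sign bookkeeping, and the single fact one must use is $f(A)=A$, immediate from $W\in\langle A\rangle$. As a sanity check (and an alternative route), one can expand in the basis of products $E_iBE_j$: Definition \ref{def:WWW} gives $W^2BW^{-2}+W^{-2}BW^2 = \sum_{i,j}\bigl((t_i/t_j)^2+(t_j/t_i)^2\bigr)E_iBE_j$ with $t_i=(-1)^i a^{-i}q^{i(d-i)}$, whereas $2B+\lbrack A,\lbrack A,B\rbrack\rbrack/(q-q^{-1})^2 = \sum_{i,j}\bigl(2+(\theta_i-\theta_j)^2/(q-q^{-1})^2\bigr)E_iBE_j$; by Lemma \ref{lem:tripleprod} only the terms with $|i-j|\le 1$ contribute, and there Lemma \ref{lem1} and Lemma \ref{lem:eachprof} give $\theta_i-\theta_{i-1}=(q-q^{-1})(aq^{2i-d-1}-a^{-1}q^{d-2i+1})$ and $t_i/t_{i-1}=-a^{-1}q^{d-2i+1}$, so both coefficients equal $a^2q^{4i-2d-2}+a^{-2}q^{2d-4i+2}$ (and the $i=j$ coefficients agree trivially). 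I would present the automorphism argument as the proof, since it is shorter and makes the structural meaning transparent.
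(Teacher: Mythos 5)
Your proof is correct and is essentially the paper's own argument: the paper expands $W^2BW^{-2}+W^{-2}BW^2-2B$ as $W(WBW^{-1}-W^{-1}BW)W^{-1}-W^{-1}(WBW^{-1}-W^{-1}BW)W$ and applies Lemma \ref{lem:WBWi} twice together with $AW=WA$, which is exactly your device of pushing (\ref{eq:WBW}) through the conjugation automorphism $f$ and $f^{-1}$ and subtracting. The idempotent sanity check is a fine (and also valid) alternative, but the main argument you give coincides with the paper's.
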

\begin{proof} Concerning
(\ref{eq:W2BW2}), observe
\begin{eqnarray*}
&&W^2 B W^{-2} + W^{-2} B W^2 - 2B
\\
&& \qquad = \;\;
W (WBW^{-1} - W^{-1} B W)W^{-1} -
W^{-1} (WBW^{-1} - W^{-1} B W)W
\\
&& \qquad = \;\;
\frac{W\lbrack A,B\rbrack W^{-1}-W^{-1}\lbrack A,B\rbrack W}{q-q^{-1}}
\\
&& \qquad = \;\;
 \frac{\lbrack A, WBW^{-1} - W^{-1} BW \rbrack}{q-q^{-1}}
\\
&& \qquad = \;\;
 \frac{\lbrack A,\lbrack A,B\rbrack \rbrack}{(q-q^{-1})^2}.
\end{eqnarray*}
The proof of (\ref{eq:W2CW2}) is similar.
\end{proof}

\noindent Let $X$ denote one of
$A$,
$B$,
$C$.
In the next result we conjugate $X$ by
each of $W^{\pm 2 }$,  $(W')^{\pm 2}$, $(W'')^{\pm 2}$.

\begin{proposition}
\label{prop:ABCtableW2}
We have
\bigskip

\centerline{
\begin{tabular}[t]{c|ccc}
   $X$ & $A$ & $B$ & $C$ 
\\
   $W^{-2} X W^2$ & $A$ & $B+\frac{\lbrack C,A\rbrack}{q-q^{-1}}$ &
   $C-\frac{\lbrack A,B\rbrack}{q-q^{-1}} +
   \frac{\lbrack A, \lbrack A, C\rbrack\rbrack}{(q-q^{-1})^2} $
   \\
   $W^2XW^{-2}$ & $A$  &
   $B-\frac{\lbrack C,A\rbrack}{q-q^{-1}} +
   \frac{\lbrack A, \lbrack A, B\rbrack\rbrack}{(q-q^{-1})^2} $
  & 
   $C+\frac{\lbrack A,B\rbrack}{q-q^{-1}}$
\\
   $(W')^{-2} X (W')^2$ &
   $A-\frac{\lbrack B,C\rbrack}{q-q^{-1}} +
   \frac{\lbrack B, \lbrack B, A\rbrack\rbrack}{(q-q^{-1})^2} $
   & 
    $B$ & 
    $C+\frac{\lbrack A,B\rbrack}{q-q^{-1}}$ 
 \\ 
  $(W')^2X(W')^{-2}$ &
    $A+\frac{\lbrack B,C\rbrack}{q-q^{-1}}$ 
   & 
   $B$
   &
   $C-\frac{\lbrack A,B\rbrack}{q-q^{-1}} +
   \frac{\lbrack B, \lbrack B, C\rbrack\rbrack}{(q-q^{-1})^2} $
\\
   $(W'')^{-2} X (W'')^2$ &
    $A+\frac{\lbrack B,C\rbrack}{q-q^{-1}}$ 
   &
   $B-\frac{\lbrack C,A\rbrack}{q-q^{-1}} +
   \frac{\lbrack C, \lbrack C, B\rbrack\rbrack}{(q-q^{-1})^2} $
 & $C$ 
 \\ 
  $(W'')^2X(W'')^{-2}$ &
   $A-\frac{\lbrack B,C\rbrack}{q-q^{-1}} +
   \frac{\lbrack C, \lbrack C, A\rbrack\rbrack}{(q-q^{-1})^2} $
    &
    $B+\frac{\lbrack C,A\rbrack}{q-q^{-1}}$ 
    &
    $C$
\\
           \end{tabular}}

\end{proposition}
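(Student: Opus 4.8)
The plan is to prove the first two rows of the table — conjugation by $W^{\pm 2}$ — and then obtain the remaining four rows by the cyclic symmetry of Lemma \ref{lem:sym}. In the first two rows the $A$-column is immediate: by Lemma \ref{def:W2W2W2} we have $W^2, W^{-2} \in \langle A \rangle$, so $W^{\pm 2}$ commutes with $A$, and by Lemma \ref{lem:AbarAbrack} it also commutes with $\overline A$. For the other entries I would compute $W^{-2} X W^2 = W^{-1}(W^{-1} X W) W$ and $W^2 X W^{-2} = W(W X W^{-1}) W^{-1}$ by iterating Proposition \ref{prop:ABCtable}.

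For $X = B$: Proposition \ref{prop:ABCtable} gives $W^{-1} B W = C + \overline A$; applying $W^{-1}(\,\cdot\,)W$ once more, with $W^{-1} C W = B + (CA-AC)/(q-q^{-1}) - \overline A$ and $W^{-1}\overline A W = \overline A$, the two copies of $\overline A$ cancel and one gets $W^{-2} B W^2 = B + (CA-AC)/(q-q^{-1}) = B + [C,A]/(q-q^{-1})$. For $X = C$: starting from $W^{-1} C W = B + (CA-AC)/(q-q^{-1}) - \overline A$ and applying $W^{-1}(\,\cdot\,)W$ again, one needs $W^{-1} B W = C + \overline A$, $W^{-1}\overline A W = \overline A$, and the conjugate of the middle term, which since $W$ commutes with $A$ equals $\big((W^{-1}CW)A - A(W^{-1}CW)\big)/(q-q^{-1})$; the only nonroutine input here is the identity $[\,CA - AC,\,A\,] = [\,A,[A,C]\,]$ together with $[A,\overline A] = 0$. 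Collecting terms (again the $\overline A$'s cancel) yields $W^{-2} C W^2 = C - [A,B]/(q-q^{-1}) + [A,[A,C]]/(q-q^{-1})^2$, matching the table.

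For the rows $W^2(\,\cdot\,)W^{-2}$ I would use Lemma \ref{lem:W22} rather than conjugate a third time: from (\ref{eq:W2BW2}) and the value of $W^{-2} B W^2$ just found, $W^2 B W^{-2} = 2B + [A,[A,B]]/(q-q^{-1})^2 - W^{-2} B W^2$, which simplifies to the stated entry, and likewise for $C$ via (\ref{eq:W2CW2}). Finally, the $(W')^{\pm 2}$ and $(W'')^{\pm 2}$ rows follow from the first two rows by symmetry. By Lemma \ref{lem:sym} the triple $B,C,A$ has $q$-Racah type with Huang data $(b,c,a,d)$, and under the relabeling $A,B,C \mapsto B,C,A$ one has, comparing Definitions \ref{def:WWW} and \ref{def:Aoverline}, $W,W',W'' \mapsto W',W'',W$ and $\overline A,\overline B,\overline C \mapsto \overline B,\overline C,\overline A$; applying the already-proven two rows to this relabeled triple gives the $(W')^{\pm 2}$ rows, and applying it twice gives the $(W'')^{\pm 2}$ rows.

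The only real work is bookkeeping: keeping straight the sign conventions $[X,Y]$ versus $[Y,X]$ so that the output of the iterated conjugations matches the table entries exactly, and checking the double-commutator identity $[\,XA - AX,\,A\,] = [\,A,[A,X]\,]$ that turns the ``once-conjugated'' commutator term into the ``twice-conjugated'' one. The one point requiring care is the precise dictionary for the cyclic symmetry — which row of the proven block corresponds to which row of the table — but this is pinned down by Lemma \ref{lem:sym} and the explicit formulas in Definition \ref{def:WWW}.
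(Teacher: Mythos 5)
Your proposal is correct and follows essentially the same route as the paper: the $W^{\pm 2}$ rows are obtained by iterating Proposition \ref{prop:ABCtable} using that $W$ commutes with $\overline A$ (with Lemma \ref{lem:W22} handling the $W^{2}XW^{-2}$ entries), and your computations, including the identity $\lbrack CA-AC,A\rbrack=\lbrack A,\lbrack A,C\rbrack\rbrack$ and the cancellation of the $\overline A$ terms, check out against the table. The only difference is cosmetic: where the paper says the $W'$, $W''$ rows are ``similarly verified,'' you make this precise by invoking the cyclic symmetry of Lemma \ref{lem:sym} together with the dictionary $W,W',W''\mapsto W',W'',W$ and $\overline A,\overline B,\overline C\mapsto\overline B,\overline C,\overline A$, which is a legitimate and accurate formalization of the same argument.
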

\begin{proof} Consider the rows of the above table that involve $W$.
To verify these rows,
repeatedly apply Proposition
\ref{prop:ABCtable} 
and use the fact that $W$ commutes with $\overline A$. Lemma
\ref{lem:W22}
can be used to simplify the calculation.
The rows of the table involving $W'$, $W''$ are similarly
verified.
\end{proof}

\noindent We mention an alternate way to express the
results in Proposition
\ref{prop:ABCtableW2}. We focus on the part involving $W$.

\begin{lemma}
\label{prop:Wm2BW2}
We have
\begin{eqnarray*}
&&
W^{-2} B W^2 = B + \frac{qA^2B-(q+q^{-1})ABA+q^{-1} BA^2}{(q-q^{-1})(q^2-q^{-2})},
%%%%\label{eq:W2B}
\\
&&
W^2 B W^{-2} =
B + \frac{q^{-1}A^2B-(q+q^{-1})ABA+q BA^2}{(q-q^{-1})(q^2-q^{-2})}.
%%%%\label{eq:BW2}
\end{eqnarray*}
\end{lemma}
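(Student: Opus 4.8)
The plan is to express $W^{-2}BW^2$ directly in terms of $A$ and $B$ by starting from Proposition~\ref{prop:ABCtableW2}. From the first row of that table we have
\[
W^{-2}BW^2 = B + \frac{\lbrack C,A\rbrack}{q-q^{-1}} = B + \frac{CA-AC}{q-q^{-1}},
\]
so it remains to rewrite $CA-AC$ using $A$ and $B$ only. For this I would solve equation (\ref{eq:C}) for $C$, namely
\[
C = \alpha_c I - \frac{qAB-q^{-1}BA}{q^2-q^{-2}},
\]
and substitute this into $CA-AC$. The $\alpha_c I$ term cancels in the commutator, leaving
\[
CA-AC = -\frac{q(ABA-BA^2) - q^{-1}(BA^2\,?\,)}{q^2-q^{-2}}\cdots,
\]
i.e. one expands $(qAB-q^{-1}BA)A - A(qAB-q^{-1}BA) = q(ABA) - q^{-1}BA^2 - qA^2B + q^{-1}ABA$. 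Dividing by $q^2-q^{-2}$ and then by $q-q^{-1}$ and collecting the $A^2B$, $ABA$, $BA^2$ terms gives exactly the claimed formula. The second identity is obtained the same way from the row $W^2XW^{-2}$ of Proposition~\ref{prop:ABCtableW2}, where $W^2BW^{-2} = B - \frac{\lbrack C,A\rbrack}{q-q^{-1}} + \frac{\lbrack A,\lbrack A,B\rbrack\rbrack}{(q-q^{-1})^2}$; here one also needs to expand the double commutator $\lbrack A,\lbrack A,B\rbrack\rbrack = A^2B - 2ABA + BA^2$ and combine with the $-\lbrack C,A\rbrack/(q-q^{-1})$ term after substituting for $C$ as above. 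The two contributions add so that the coefficients of $A^2B$ and $BA^2$ become $q^{-1}$ and $q$ respectively (the reverse of the first identity), consistent with the symmetry $q \leftrightarrow q^{-1}$ that swaps $W$ and $W^{-1}$.

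Alternatively, and perhaps cleaner, I would bypass Proposition~\ref{prop:ABCtableW2} and work directly from the spectral expansions in Lemma~\ref{def:W2W2W2}. Since $B = \sum_j \sum_{k} E_j B E_k$ and $W^{\mp 2}$ act on $E_i$ by the scalars $a^{\pm 2i}q^{\mp 2i(d-i)}$, one gets $W^{-2}BW^2 = \sum_{i,j} (a^{2i-2j} q^{2j(d-j)-2i(d-i)}) E_i B E_j$, and by Lemma~\ref{lem:tripleprod} only the terms with $|i-j| \le 1$ survive. For $|i-j|=1$ the scalar $a^{2i-2j}q^{2j(d-j)-2i(d-i)}$ simplifies using Lemma~\ref{lem:eachprof}-type manipulations to something expressible via $\theta_i,\theta_j$, and then one recognizes the resulting element of $\mathrm{End}(V)$ as the stated polynomial in $A,B$ by checking that both sides have the same $E_iXE_j$ components for all $i,j$. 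This is essentially the bookkeeping underlying Proposition~\ref{prop:ABCtableW2} itself, so in the write-up I would simply cite that proposition and do the short commutator expansion.

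The main obstacle is purely computational: keeping track of signs and of the factors $q-q^{-1}$ versus $q^2-q^{-2}=(q-q^{-1})(q+q^{-1})$ when combining the commutator term $\lbrack C,A\rbrack/(q-q^{-1})$ with the double-commutator term in the second identity. Concretely one must verify that
\[
-\frac{CA-AC}{q-q^{-1}} + \frac{A^2B-2ABA+BA^2}{(q-q^{-1})^2}
= \frac{q^{-1}A^2B - (q+q^{-1})ABA + qBA^2}{(q-q^{-1})(q^2-q^{-2})}
\]
after substituting $C = \alpha_c I - (qAB-q^{-1}BA)/(q^2-q^{-2})$; this is a finite check comparing coefficients of $A^2B$, $ABA$, $BA^2$ and noting the $\alpha_c$ terms drop out. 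No conceptual difficulty arises, and the result follows.
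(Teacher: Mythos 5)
Your proposal is correct and follows essentially the same route as the paper: take the entries for $W^{-2}BW^2$ and $W^2BW^{-2}$ from Proposition~\ref{prop:ABCtableW2} and eliminate $C$ via (\ref{eq:C}), then expand the commutators. The coefficient bookkeeping you outline (including the combination of $-\lbrack C,A\rbrack/(q-q^{-1})$ with $\lbrack A,\lbrack A,B\rbrack\rbrack/(q-q^{-1})^2$ in the second identity) checks out.
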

\begin{proof} 
In the formula for $W^{-2}BW^{2}$ and
 $W^{2}BW^{-2}$ given in Proposition
\ref{prop:ABCtableW2},
eliminate $C$ using
 (\ref{eq:C}).
\end{proof}

\begin{lemma}
\label{prop:W2C}
We have
\begin{eqnarray*}
&&
W^{-2} C  W^2 = C+ \frac{qA^2 C-(q+q^{-1})ACA +q^{-1} CA^2}{(q-q^{-1})(q^2-q^{-2})}, 
\\
&&
W^{2} C  W^{-2} = C + \frac{q^{-1} A^2C-(q+q^{-1})ACA+q CA^2}{(q-q^{-1})(q^2-q^{-2})}.
\end{eqnarray*}
\end{lemma}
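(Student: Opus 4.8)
The plan is to proceed exactly in parallel with the proof of Lemma~\ref{prop:Wm2BW2}, which has just been established for $B$; here we do the analogous computation for $C$. The starting point is Proposition~\ref{prop:ABCtableW2}, which gives
\[
W^{-2}CW^{2} = C - \frac{\lbrack A,B\rbrack}{q-q^{-1}} + \frac{\lbrack A,\lbrack A,C\rbrack\rbrack}{(q-q^{-1})^2},
\qquad
W^{2}CW^{-2} = C + \frac{\lbrack A,B\rbrack}{q-q^{-1}}
\]
up to the symmetric correction term; more precisely, reading the relevant two entries of the table, $W^{-2}CW^2$ and $W^{2}CW^{-2}$ differ from $C$ by expressions involving $\lbrack A,B\rbrack$ and $\lbrack A,\lbrack A,C\rbrack\rbrack$. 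The goal is to re-express these in terms of $A$ and $C$ alone, eliminating $B$.

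First I would use (\ref{eq:C}) to solve for the combination $qAB-q^{-1}BA$ in terms of $C$ and $I$, namely $qAB-q^{-1}BA = (q^2-q^{-2})(\alpha_c I - C)$. This handles the ``skew'' commutator-like term $\lbrack A,B\rbrack$ appearing in the table, but note it is the \emph{ordinary} commutator $AB-BA$ that appears there, not the $q$-deformed one, so a little care is needed: one writes $AB-BA$ in terms of $qAB-q^{-1}BA$ and $AB+BA$, or more efficiently one eliminates $B$ entirely at the level of the double-commutator term. The cleanest route is to substitute the expression for $\lbrack A,\lbrack A,C\rbrack\rbrack$: expanding $\lbrack A,\lbrack A,C\rbrack\rbrack = A^2C - 2ACA + CA^2$, and separately handling $\lbrack A,B\rbrack$, then combining so that all $B$-dependence cancels. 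In fact, the slicker approach is to invoke Lemma~\ref{lem:AAB}: the fifth equation there (for $C^2B$) is not quite what is needed, but the second equation $B^2C-(q^2+q^{-2})BCB+CB^2+(q^2-q^{-2})^2C = \cdots$ together with the relation (\ref{eq:C}) lets one express everything consistently. Actually the most direct path mirrors the previous lemma verbatim: in the table entries for $W^{\pm 2}CW^{\mp 2}$, the term $\lbrack A,B\rbrack/(q-q^{-1})$ is eliminated using (\ref{eq:C}), while the double-commutator term $A^2C-2ACA+CA^2$ is already a polynomial in $A$ and $C$ of the desired shape, and the two pieces recombine into the single quoted rational expression.

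Therefore the proof is: start from Proposition~\ref{prop:ABCtableW2}, substitute for $\lbrack A,B\rbrack$ using (\ref{eq:C}) (which gives $AB-BA$ once one also notes $AB+BA$ can be read off, or more simply uses the identity $q^{-1}(qAB-q^{-1}BA) - q^{-1}(q^{-1}AB - q BA)$-type manipulation), expand the double commutator, and collect terms over the common denominator $(q-q^{-1})(q^2-q^{-2})$. The $\alpha_c$-terms and the $I$-terms cancel identically, leaving precisely
\[
W^{-2}CW^{2} = C + \frac{qA^2C-(q+q^{-1})ACA+q^{-1}CA^2}{(q-q^{-1})(q^2-q^{-2})}
\]
and the companion formula with $q$ and $q^{-1}$ interchanged. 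The main obstacle, such as it is, is purely bookkeeping: keeping straight that the commutators in the table are ordinary ($XY-YX$) while the relations (\ref{eq:A})--(\ref{eq:C}) involve the $q$-commutator $qXY-q^{-1}YX$, so one must combine the table entry with a companion identity (or with Lemma~\ref{lem:AAB}) to make the $B$'s drop out cleanly rather than leaving a residual $AB+BA$ term. Once that algebraic reconciliation is set up correctly, the rest is routine simplification, so I would simply write ``In the formulas for $W^{-2}CW^2$ and $W^2CW^{-2}$ given in Proposition~\ref{prop:ABCtableW2}, eliminate $B$ using~(\ref{eq:C}) and simplify,'' in exact analogy with the proof of Lemma~\ref{prop:Wm2BW2}.
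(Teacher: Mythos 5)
Your overall strategy (read off $W^{\pm 2}CW^{\mp 2}$ from Proposition \ref{prop:ABCtableW2}, eliminate $B$, expand the double commutator, and collect everything over $(q-q^{-1})(q^2-q^{-2})$) is the right one, but the elimination step you finally settle on is wrong, and the difficulty you yourself flag is never resolved. Equation (\ref{eq:C}) determines only the $q$-commutator $qAB-q^{-1}BA=(q^2-q^{-2})(\alpha_c I-C)$; it determines neither $AB-BA$ nor $B$ itself as a polynomial in $A$ and $C$. So ``eliminate $B$ using (\ref{eq:C})'' cannot remove the term $\lbrack A,B\rbrack/(q-q^{-1})$ from the table entries: you are left with an unresolved $AB+BA$ (equivalently, with $B$ itself), which is exactly the obstruction you mention and then wave away. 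The detour through Lemma \ref{lem:AAB} as you describe it (the relation involving $B^2C$) does not repair this either, since that relation is quadratic in $B$.

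The correct move, and the paper's actual proof, is to use (\ref{eq:B}) instead: it gives $B=\alpha_b I-\frac{qCA-q^{-1}AC}{q^2-q^{-2}}$, i.e. $B$ itself as a polynomial in $A$ and $C$, whence $\lbrack A,B\rbrack=\frac{q^{-1}A^2C-(q+q^{-1})ACA+qCA^2}{q^2-q^{-2}}$ (the $\alpha_b I$ term commutes with $A$ and drops out; no $\alpha$'s survive). Substituting this into the two table entries, and for $W^{-2}CW^{2}$ also adding $\lbrack A,\lbrack A,C\rbrack\rbrack=A^2C-2ACA+CA^2$ over the common denominator, yields exactly the two stated formulas. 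Note that this is the true analogue of Lemma \ref{prop:Wm2BW2}: there the table entries contain $\lbrack C,A\rbrack$, and (\ref{eq:C}) is invoked precisely because it expresses $C$ (not merely a $q$-commutator) as a polynomial in $A$ and $B$; here the roles of $B$ and $C$ are interchanged, so the relation to invoke is (\ref{eq:B}).
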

\begin{proof} 
In the formula for $W^{-2}CW^{2}$ and
 $W^{2}CW^{-2}$ given in Proposition
\ref{prop:ABCtableW2},
eliminate $B$ using
 (\ref{eq:B}).
\end{proof}

\begin{note}\rm  Lemmas
\ref{prop:Wm2BW2},
\ref{prop:W2C} indicate how $W^2$, $(W')^2$, $(W'')^2$  are related to
the Lusztig automorphisms considered by Baseilhac and Kolb
\cite{baseilhac}.
\end{note}

\section{The elements $W^{\pm 1}$, $W^{\pm 2}$ as a polynomial in $A$}

\noindent We continue to discuss the Leonard triple
$A,B,C$ on $V$ with $q$-Racah type and Huang data $(a,b,c,d)$.
 Recall the map
$W$ from Definition
\ref{def:WWW}.
In this section we express 
each of $W^{\pm 1}$, $W^{\pm 2}$ as a polynomial in $A$. 
First we recall some notation. For $x,t \in \mathbb F$,
\begin{eqnarray*}
(x;t)_n = (1-x)(1-xt)\cdots (1-xt^{n-1}) \qquad \qquad
n=0,1,2,\ldots
\end{eqnarray*}
We interpret $(x;t)_0=1$.

\begin{proposition} 
\label{lem:W1}
We have
\begin{eqnarray}
W = \sum_{i=0}^d \frac{(-1)^i q^{i^2}(A-\theta_0I)(A-\theta_1I)\cdots (A-\theta_{i-1}I)}
{(q^2;q^2)_i (aq^{1-d};q^2)_i},
\label{eq:Wsum1}
\end{eqnarray}
\begin{eqnarray}
W^{-1} = \sum_{i=0}^d \frac{(-1)^i a^i q^{i(i-d+1)}(A-\theta_0I)(A-\theta_1I)\cdots (A-\theta_{i-1}I)}
{(q^2;q^2)_i (aq^{1-d};q^2)_i}.
\label{eq:Wisum1}
\end{eqnarray}
\end{proposition}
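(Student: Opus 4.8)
The plan is to verify each formula by comparing the action of the two sides on the eigenspaces of $A$. Write the right-hand side of $(\ref{eq:Wsum1})$ as $p(A)$, a polynomial in $A$ of degree at most $d$ (well defined since the products in $(\ref{eq:taubasis})$ form a basis for $\langle A\rangle$, the denominators being nonzero by Assumption~\ref{lem:nonz}). Since $AE_i=\theta_i E_i$ we have $p(A)E_i=p(\theta_i)E_i$, while $WE_i=(-1)^i a^{-i}q^{i(d-i)}E_i$ by Definition~\ref{def:WWW}. Because $I=\sum_{i=0}^d E_i$, it suffices to prove the scalar identity
\[
p(\theta_n)=(-1)^n a^{-n}q^{n(d-n)}\qquad (0\le n\le d),
\]
and the analogous statement for $W^{-1}$, for which $W^{-1}E_i=(-1)^i a^{i}q^{-i(d-i)}E_i$ by Lemma~\ref{lem:Wi}. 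Note that when $A$ is specialized to $\theta_n$ the summand of index $j$ in $(\ref{eq:Wsum1})$ acquires the factor $\theta_n-\theta_n$ as soon as $j>n$, so the sum defining $p(\theta_n)$ runs over $0\le j\le n$ only.

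The next step is to recognize this truncated sum as a terminating basic hypergeometric series. Using Lemma~\ref{lem1} in the form $\theta_n-\theta_k=(1-q^{2k-2n})(aq^{2n-d}-a^{-1}q^{d-2k})$, separating the two kinds of factors, and applying the elementary reversal $(x;q^{-2})_j=(x^{-1};q^2)_j(-x)^jq^{-j(j-1)}$, one obtains
\[
(\theta_n-\theta_0)(\theta_n-\theta_1)\cdots(\theta_n-\theta_{j-1})
=(-1)^j a^{-j}q^{j(d-j+1)}\,(q^{-2n};q^2)_j\,(a^2q^{2n-2d};q^2)_j .
\]
Substituting this into $p(\theta_n)$ and collecting the powers of $a$ and $q$, the identity to be proved becomes
\[
\sum_{j=0}^n
\frac{(q^{-2n};q^2)_j\,(a^2q^{2n-2d};q^2)_j}{(q^2;q^2)_j\,(aq^{1-d};q^2)_j}\,
\bigl(a^{-1}q^{d+1}\bigr)^j
=(-1)^n a^{-n}q^{n(d-n)} .
\]
With $b=a^2q^{2n-2d}$ and $c=aq^{1-d}$ one has $a^{-1}q^{d+1}=cq^{2n}/b$, so the left-hand side is exactly a $q$-Chu--Vandermonde sum in base $q^2$ (the ``$cq^{2n}/b$'' form), whose value is $(c/b;q^2)_n/(c;q^2)_n=(a^{-1}q^{1+d-2n};q^2)_n/(aq^{1-d};q^2)_n$. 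A short direct manipulation---pull the factor $-aq^{1-d+2k}$ out of the $k$th term of $(aq^{1-d};q^2)_n$ and reverse the order of the product, giving $(aq^{1-d};q^2)_n=(-a)^nq^{n^2-nd}(a^{-1}q^{1+d-2n};q^2)_n$---shows this ratio equals $(-1)^n a^{-n}q^{n(d-n)}$, which proves $(\ref{eq:Wsum1})$.

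For $(\ref{eq:Wisum1})$ the reasoning is identical, except that after inserting the product formula above the powers of $a$ cancel and every power of $q$ except $q^{2j}$ cancels, so the relevant series is
\[
\sum_{j=0}^n
\frac{(q^{-2n};q^2)_j\,(a^2q^{2n-2d};q^2)_j}{(q^2;q^2)_j\,(aq^{1-d};q^2)_j}\,q^{2j},
\]
which is the $q$-Chu--Vandermonde sum with argument equal to the base $q^2$; its value is $b^n(c/b;q^2)_n/(c;q^2)_n$ with the same $b,c$, and the ratio computation above then yields $(-1)^n a^{n}q^{-n(d-n)}$, which is exactly the eigenvalue of $W^{-1}$ on $E_nV$. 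I expect the only real difficulty to be bookkeeping: writing the products $\prod_{k=0}^{j-1}(\theta_n-\theta_k)$ as $q$-shifted factorials without mishandling the powers of $a$ and $q$, and then matching $b$, $c$ and the argument to the precise $q$-Chu--Vandermonde evaluation used in each case, together with the reversal identity that turns $(c/b;q^2)_n/(c;q^2)_n$ back into a monomial in $a$ and $q$. Once those parameters are pinned down, the evaluations are immediate and the rest is formal.
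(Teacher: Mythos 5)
Your proof is correct and follows essentially the same route as the paper: compare eigenvalues of both sides on each $E_nV$, use Lemma~\ref{lem1} to rewrite the truncated sum as a terminating ${}_2\phi_1$ in base $q^2$, and evaluate by the $q$-Chu--Vandermonde identity. The only cosmetic difference is that for (\ref{eq:Wsum1}) the paper reuses the identity established for (\ref{eq:Wisum1}) with $a,q$ replaced by $a^{-1},q^{-1}$, whereas you invoke the other standard form of Chu--Vandermonde directly and simplify the resulting ratio by hand; both are immediate.
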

\begin{proof} 
It is convenient to prove  
(\ref{eq:Wisum1}) first.
Each side of (\ref{eq:Wisum1}) is a polynomial in $A$.
For $0 \leq j \leq d$ we show that the two sides have the same
eigenvalue for $E_j$.  
This amounts to showing
\begin{eqnarray}
(-1)^j a^{j} q^{j(j-d)} =
\sum_{i=0}^d \frac{(-1)^i  a^i q^{i(i-d+1)} 
(\theta_j-\theta_0)(\theta_j-\theta_1)
\cdots (\theta_j-\theta_{i-1})}
{(q^2;q^2)_i (aq^{1-d};q^2)_i}.
\label{eq:sumri}
\end{eqnarray}
Evaluating 
(\ref{eq:sumri})
using 
Lemma \ref{lem1}, we find it
becomes the following special case of the $q^2$-Chu-Vandermonde identity
\cite[p.~236]{gr}:
\begin{eqnarray}
(-1)^j a^{j} q^{j(j-d)} =
 {}_2\phi_1 \Biggl(
 \genfrac{}{}{0pt}{}
  {q^{-2j}, \;a^2 q^{2j-2d}}
   {a q^{1-d}}
    \;\Bigg\vert \; q^2,\; q^2 \Biggr).
\label{eq:identity1}
\end{eqnarray}
We are using the basic hypergeometric series notation
\cite[p.~3]{gr}.
For 
(\ref{eq:Wsum1}) the proof is similar.
For $0 \leq j \leq d$ we show that the two sides have the same
eigenvalue for $E_j$.  
This amounts to showing
\begin{eqnarray}
(-1)^j a^{-j} q^{j(d-j)} =
\sum_{i=0}^d \frac{(-1)^i q^{i^2} (\theta_j-\theta_0)(\theta_j-\theta_1)
\cdots (\theta_j-\theta_{i-1})}
{(q^2;q^2)_i (aq^{1-d};q^2)_i}.
\label{eq:sumr}
\end{eqnarray}
Evaluating
(\ref{eq:sumr}) 
using 
Lemma \ref{lem1},
we find it becomes the following identity, which is just
(\ref{eq:identity1}) with $a,q$ replaced by $a^{-1},q^{-1}$.
\begin{eqnarray*}
(-1)^j a^{-j} q^{j(d-j)} =
 {}_2\phi_1 \Biggl(
 \genfrac{}{}{0pt}{}
  {q^{2j}, \;a^{-2} q^{2d-2j}}
   {a^{-1} q^{d-1}}
    \;\Bigg\vert \; q^{-2},\; q^{-2} \Biggr).
%%%%%%%%%%%\label{eq:identity2}
\end{eqnarray*}
%%%%%%older version, save %%%%%
%\begin{eqnarray}
%(-1)^j a^{-j} q^{j(d-j)} =
% {}_2\phi_1 \Biggl(
% \genfrac{}{}{0pt}{}
%  {q^{-2j}, \;a^2 q^{2j-2d}}
%   {a q^{1-d}}
%    \;\Bigg\vert \; q^2,\; a^{-1}q^{1+d} \Biggr).
%\end{eqnarray}
%%%%%%%%%%%%%%%%%%%%%%%%%%%
%The result follows.
%Alternatively, use the fact that for $0 \leq i \leq d$,
%\begin{eqnarray*}
%\sum_{n=0}^i 
%\frac{(-1)^n a^{-n} q^{n(d-n)}}{
%(\theta_n-\theta_0) \cdots 
%(\theta_n-\theta_{n-1}) 
%(\theta_n-\theta_{n+1}) \cdots 
%(\theta_n-\theta_{i})}
%= 
%\frac{(-1)^i q^{i^2}}{(q^2;q^2)_i (aq^{1-d};q^2)_i}.
%\end{eqnarray*}
%%%%%%%%%%%%%%%%%%%%%%
%and also
%\begin{eqnarray}
%W^{-1} = \sum_{i=0}^d \frac{(-1)^{d-i} a^{d-i} q^{i(i-d+1)}
%(A-\theta_dI)(A-\theta_{d-1}I)\cdots (A-\theta_{d-i+1}I)}
%{(q^2;q^2)_i (a^{-1}q^{1-d};q^2)_i}
%\label{eq:Wisum2}
%\end{eqnarray}
%%%%%%%%%%%%%%%%%%%%%%%%%%%
%%%%%%%%%%%%%%%%%
%Alternatively, use the fact that for $0 \leq i \leq d$,
%\begin{eqnarray*}
%\sum_{n=0}^i 
%\frac{(-1)^n a^{n} q^{-n(d-n)}}{
%(\theta_n-\theta_0) \cdots 
%(\theta_n-\theta_{n-1}) 
%(\theta_n-\theta_{n+1}) \cdots 
%(\theta_n-\theta_{i})}
%= 
%\frac{(-1)^i a^i q^{i(i-d+1)}}{(q^2;q^2)_i (aq^{1-d};q^2)_i}.
%\end{eqnarray*}
%%%%%%%%%%%%%%%%%%%%%%%%%%%%%%%%%%%%%%%
\end{proof}

\begin{proposition} 
\label{lem:W2sum1}
We have
\begin{eqnarray}
W^{2} = \sum_{i=0}^d \frac{ a^{-i} q^{id}(A-\theta_0I)(A-\theta_1I)\cdots (A-\theta_{i-1}I)}
{(q^2;q^2)_i},
\label{eq:W2sum1}
\end{eqnarray}
\begin{eqnarray}
W^{-2} = \sum_{i=0}^d \frac{ (-1)^i a^{i} q^{i(i-d+1)}
(A-\theta_0I)(A-\theta_{1}I)\cdots (A-\theta_{i-1}I)}
{(q^2;q^2)_i}.
\label{eq:W2isum1}
\end{eqnarray}
\end{proposition}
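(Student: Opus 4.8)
The plan is to follow the template of the proof of Proposition \ref{lem:W1}: prove \eqref{eq:W2isum1} first, then deduce \eqref{eq:W2sum1} from it by the substitution $a,q\mapsto a^{-1},q^{-1}$. Both sides of \eqref{eq:W2isum1} lie in $\langle A\rangle$, hence are polynomials in $A$ of degree at most $d$, so it suffices to check that they have the same eigenvalue on each primitive idempotent $E_j$ $(0\le j\le d)$. By \eqref{eq:W2form} the eigenvalue of $W^{-2}$ on $E_j$ is $a^{2j}q^{-2j(d-j)}$, and the eigenvalue of the right-hand side of \eqref{eq:W2isum1} on $E_j$ is
\begin{eqnarray*}
\sum_{i=0}^d \frac{(-1)^i a^{i} q^{i(i-d+1)}(\theta_j-\theta_0)(\theta_j-\theta_1)\cdots(\theta_j-\theta_{i-1})}{(q^2;q^2)_i}.
\end{eqnarray*}
So everything reduces to a scalar identity, and the real work is to evaluate this sum.

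Using Lemma \ref{lem1} I would write $\theta_j-\theta_\ell=(1-q^{2\ell-2j})(aq^{2j-d}-a^{-1}q^{d-2\ell})$ and factor $\prod_{\ell=0}^{i-1}(\theta_j-\theta_\ell)$ into $q$-shifted factorials; up to a monomial in $a$ and $q$ this product equals $(q^{-2j};q^2)_i\,(a^2q^{2j-2d};q^2)_i$, the one slightly delicate step being the passage of a product with ratio $q^{-2}$ into base $q^2$. Substituting this back and collecting the powers of $q$, the sum collapses to
\begin{eqnarray*}
\sum_{i=0}^d \frac{(q^{-2j};q^2)_i\,(a^2q^{2j-2d};q^2)_i}{(q^2;q^2)_i}\,q^{2i}
\;=\;
{}_2\phi_1 \Biggl(
 \genfrac{}{}{0pt}{}
  {q^{-2j}, \;a^2 q^{2j-2d}}
   {0}
    \;\Bigg\vert \; q^2,\; q^2 \Biggr),
\end{eqnarray*}
which by the case $c=0$ of the $q^2$-Chu--Vandermonde identity \cite[p.~236]{gr} equals $(a^2q^{2j-2d})^j=a^{2j}q^{-2j(d-j)}$. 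This is exactly the eigenvalue of $W^{-2}$ on $E_j$, so \eqref{eq:W2isum1} holds. (Note this is genuinely the degenerate, lower-parameter-zero, instance of Chu--Vandermonde, in contrast with Proposition \ref{lem:W1} where the lower parameter is $aq^{1-d}$.)

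For \eqref{eq:W2sum1} I would argue as at the end of the proof of Proposition \ref{lem:W1}. Again both sides are polynomials in $A$ of degree at most $d$, so it suffices to verify on $E_j$, where by \eqref{eq:W2form} the required identity is $a^{-2j}q^{2j(d-j)}=\sum_{i=0}^d \frac{a^{-i}q^{id}(\theta_j-\theta_0)\cdots(\theta_j-\theta_{i-1})}{(q^2;q^2)_i}$. Since the $\theta_i$ are invariant under $a\mapsto a^{-1}$, $q\mapsto q^{-1}$, this is precisely the identity just established for \eqref{eq:W2isum1} after that substitution, using $(q^{-2};q^{-2})_i=(-1)^i q^{-i(i+1)}(q^2;q^2)_i$ to renormalize the denominator (this is also why the sign $(-1)^i$ and the power of $q$ in the numerator differ between \eqref{eq:W2sum1} and \eqref{eq:W2isum1}). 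I expect the only real obstacle to be the bookkeeping in the middle paragraph --- tracking the exponents of $a$ and $q$ through the factorization of $\prod_\ell(\theta_j-\theta_\ell)$ and recognizing the collapsed sum as the degenerate Chu--Vandermonde series --- but this is entirely mechanical once one imitates the computation \eqref{eq:sumri}--\eqref{eq:identity1}.
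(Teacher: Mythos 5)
Your proposal is correct and follows essentially the same route as the paper: compare eigenvalues on each $E_j$, use Lemma \ref{lem1} to factor $\prod_{\ell<i}(\theta_j-\theta_\ell)$ into $q^2$-shifted factorials so that the sum becomes the lower-parameter-zero (degenerate) case of the $q^2$-Chu--Vandermonde identity, and obtain the companion formula via the substitution $a,q\mapsto a^{-1},q^{-1}$. Your intermediate computations (the collapse to $\sum_i \frac{(q^{-2j};q^2)_i\,(a^2q^{2j-2d};q^2)_i}{(q^2;q^2)_i}q^{2i}$ and the renormalization $(q^{-2};q^{-2})_i=(-1)^iq^{-i(i+1)}(q^2;q^2)_i$) check out, so no changes are needed.
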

\begin{proof} Similar to the proof of
Proposition \ref{lem:W1}.
First consider 
(\ref{eq:W2isum1}).
For $0 \leq j \leq d$ we show that the two sides of
(\ref{eq:W2isum1})
have the same
eigenvalue for $E_j$.  
This amounts to showing
\begin{eqnarray}
a^{2j} q^{2j(j-d)} =
\sum_{i=0}^d \frac{(-1)^i a^{i} q^{i(i-d+1)}
(\theta_j-\theta_0)(\theta_j-\theta_1)
\cdots (\theta_j-\theta_{i-1})}
{(q^2;q^2)_i}.
\label{eq:sum2iri}
\end{eqnarray}
Evaluating 
(\ref{eq:sum2iri}) 
using 
Lemma \ref{lem1}, we find it 
reduces to the following special case of the $q^2$-Chu-Vandermonde 
identity \cite[p.~236]{gr}:
\begin{eqnarray}
a^{2j} q^{2j(j-d)} =
 {}_2\phi_1 \Biggl(
 \genfrac{}{}{0pt}{}
  {q^{-2j}, \;a^{2} q^{2j-2d}}
   {0}
    \;\Bigg\vert \; q^{2},\; q^{2} \Biggr).
\label{eq:identity3}
\end{eqnarray}
Next consider 
(\ref{eq:W2sum1}).
For $0 \leq j \leq d$ we show that the two sides of
(\ref{eq:W2sum1})
have the same
eigenvalue for $E_j$.  
This amounts to showing
\begin{eqnarray}
a^{-2j} q^{2j(d-j)} =
\sum_{i=0}^d \frac{a^{-i} q^{id} (\theta_j-\theta_0)(\theta_j-\theta_1)
\cdots (\theta_j-\theta_{i-1})}
{(q^2;q^2)_i}.
\label{eq:sum2ri}
\end{eqnarray}
Evaluating
(\ref{eq:sum2ri}) 
using 
Lemma \ref{lem1}, we find it 
reduces to the following identity, which is just
(\ref{eq:identity3})
with $a,q$ replaced by 
$a^{-1}, q^{-1}$.
\begin{eqnarray*}
a^{-2j} q^{2j(d-j)} =
 {}_2\phi_1 \Biggl(
 \genfrac{}{}{0pt}{}
  {q^{2j}, \;a^{-2} q^{2d-2j}}
   {0}
    \;\Bigg\vert \; q^{-2},\; q^{-2} \Biggr).
\end{eqnarray*}
\end{proof}
%%%%%%%%%%%%%%%%%%%%%%%%%%%
%%%%%%%%%%%%%%%%%%%%%%%%%%%%%%%%%%%
%Alternatively, use the fact that for $0 \leq i \leq d$,
%\begin{eqnarray*}
%\sum_{n=0}^i 
%\frac{a^{-2n} q^{2n(d-n)}}{
%(\theta_n-\theta_0) \cdots 
%(\theta_n-\theta_{n-1}) 
%(\theta_n-\theta_{n+1}) \cdots 
%(\theta_n-\theta_{i})}
%= 
%\frac{a^{-i} q^{id}}{(q^2;q^2)_i}.
%\end{eqnarray*}
%%%%%%%%%%%%%%%%%%%%%%%%%%%%%
%%%%%%%%%%%%%%%%%%%%%%%%%%%
%Alternatively, use the fact that for $0 \leq i \leq d$,
%\begin{eqnarray*}
%\sum_{n=0}^i 
%\frac{a^{2n} q^{-2n(d-n)}}{
%(\theta_n-\theta_0) \cdots 
%(\theta_n-\theta_{n-1}) 
%(\theta_n-\theta_{n+1}) \cdots 
%(\theta_n-\theta_{i})}
%= 
%\frac{(-1)^i a^{i} q^{i(i-d+1)}}{(q^2;q^2)_i}.
%\end{eqnarray*}
%%%%%%%%%%%%%%%%%%%%%%%%%%%%%%%%%%%%%%%

\section{A product}

\noindent We continue to discuss the Leonard triple
$A,B,C$ on $V$ with  $q$-Racah type and Huang data $(a,b,c,d)$.
 Recall the maps
$W$, $W'$, $W''$ from Definition
\ref{def:WWW}.
In this section we compute the product
$(W'')^2(W')^2W^2$.

\begin{proposition} 
\label{prop:W2W2W2}
We have
\begin{eqnarray*}
(W'')^2 (W')^2 W^2 = (abc)^{-d}q^{d(d-1)} I.
%%%%\label{eq:WWW}
\end{eqnarray*}
\end{proposition}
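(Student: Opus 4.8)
The plan is to exploit the fact that $W^2$, $(W')^2$, $(W'')^2$ are each a polynomial in $A$, $B$, $C$ respectively, together with the Lusztig-style conjugation formulas of Section~10. By Proposition~\ref{prop:ABCtableW2}, conjugation by each of $W^{\pm 2}$, $(W')^{\pm 2}$, $(W'')^{\pm 2}$ sends the triple $A,B,C$ to a new triple expressible in $A,B,C$. First I would compute the composite map $\Phi: X \mapsto W^{-2}(W')^{-2}(W'')^{-2} X (W'')^2(W')^2 W^2$ and show, using the table in Proposition~\ref{prop:ABCtableW2} applied three times in succession, that $\Phi$ fixes each of $A$, $B$, $C$. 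Since any two of $A,B,C$ generate ${\rm End}(V)$ by Lemma~\ref{lem:2gen}, this forces $(W'')^2(W')^2W^2$ to commute with all of ${\rm End}(V)$, hence to be a scalar multiple of $I$.

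The second step is to pin down the scalar. Once we know $(W'')^2(W')^2W^2 = \lambda I$ for some $\lambda \in \mathbb F$, we can take the determinant of both sides, or more conveniently the trace, but the cleanest route is probably to evaluate a single matrix entry or, better, to use the explicit eigenvalue descriptions. From Lemma~\ref{def:W2W2W2} we have $\det(W^2) = \prod_{i=0}^d a^{-2i} q^{2i(d-i)}$, and similarly for $(W')^2$ and $(W'')^2$, using $\det(E_i)$-type reasoning: more precisely, $W^2 = \sum_i a^{-2i}q^{2i(d-i)} E_i$ has determinant equal to the product of its eigenvalues since the $E_i$ are the primitive idempotents and $V_i = E_iV$ is one-dimensional. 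So $\det\bigl((W'')^2(W')^2W^2\bigr) = \prod_{i=0}^d (abc)^{-2i} q^{4i(d-i)}$. On the other hand $\det(\lambda I) = \lambda^{d+1}$. Computing $\sum_{i=0}^d i = \binom{d+1}{2} = d(d+1)/2$ and $\sum_{i=0}^d i(d-i) = d\binom{d+1}{2} - \sum i^2$ (or more simply noting $\sum_{i=0}^d i(d-i) = \binom{d+1}{3}\cdot\text{something}$; the relevant identity is $\sum_{i=0}^d i(d-i) = \frac{d(d-1)(d+1)}{6}$), one obtains $\lambda^{d+1} = (abc)^{-d(d+1)} q^{2d(d-1)(d+1)/3}$. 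This gives $\lambda$ up to a $(d+1)$-st root of unity, so a little more care is needed — it would be cleaner to compare a single diagonal entry in a basis that simultaneously triangularizes the three factors, or to use that $W^2$ is a polynomial in $A$ with known eigenvalues and track the action on one specific vector.

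Actually, the most efficient route to the scalar avoids determinants entirely. Since $(W'')^2(W')^2W^2 = \lambda I$, I would instead compute $W^{-2}$ as a polynomial in $A$ (Proposition~\ref{lem:W2sum1}) and compare. A better idea: apply the identity $(W'')^2 (W')^2 W^2 = \lambda I$ to an eigenvector and track the leading behavior, or observe that conjugating by the product is trivial and then use a trace-of-a-product formula from Section~6. The genuinely clean argument is: from $(W'')^2(W')^2W^2=\lambda I$ we get $(W')^2 W^2 = \lambda (W'')^{-2}$, and now conjugation by $W^2$ of $(W')^2$ can be computed: $(W')^2$ is a polynomial in $B$, and Proposition~\ref{prop:ABCtableW2} tells us how $B$ transforms under $W^{\pm 2}$-conjugation, but this loops back. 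Pragmatically I expect the author evaluates the scalar by taking traces after multiplying through, using Lemma~\ref{def:W2W2W2} to write each $W^2$-type operator in the $\{E_i\}$-basis and the fact that $\operatorname{tr}(E_iE'_jE''_k)$-type products are controlled — but since we already know the product is scalar, it suffices to compute $\operatorname{tr}\bigl((W'')^2(W')^2W^2\bigr)/(d+1)$ in principle, though in practice one compares one entry.

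The main obstacle will be step two: proving the product is scalar is clean, but extracting the \emph{exact} scalar $(abc)^{-d}q^{d(d-1)}$ (not just up to a root of unity) requires either a careful basis choice or a direct polynomial-identity computation, and the bookkeeping with the exponents $\sum i$ and $\sum i(d-i)$ must be done precisely. I would verify the identity $\sum_{i=0}^d i(d-i) = \frac{d(d-1)(d+1)}{6} = \binom{d+1}{3}$ and $\sum_{i=0}^d i = \binom{d+1}{2}$, so that the product of the $(d+1)$ eigenvalues of $(W'')^2(W')^2W^2$, namely $\prod_{i=0}^d (abc)^{-2i}q^{4i(d-i)}$, equals $\bigl((abc)^{-d}q^{d(d-1)}\bigr)^{d+1}$, and then confirm the scalar itself (and not merely its $(d+1)$-st power) by evaluating on a single well-chosen vector or by matching the formula $W^2 = \sum_i a^{-2i}q^{2i(d-i)}E_i$ against the polynomial expression. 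In the write-up, I would first establish that $\Phi = \mathrm{id}$ via Proposition~\ref{prop:ABCtableW2} and Lemma~\ref{lem:2gen}, conclude scalarity, and then determine the scalar by a direct eigenvalue/determinant computation together with one normalization check.
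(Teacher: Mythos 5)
Your first step---using the conjugation table of Proposition \ref{prop:ABCtableW2} together with Lemma \ref{lem:2gen} to conclude that $(W'')^2(W')^2W^2$ is a scalar multiple of $I$---is exactly the paper's route, though the paper does it more economically: instead of verifying that the full composite conjugation $\Phi$ fixes all of $A,B,C$, it reads off from the table the single identity $(W')^2A(W')^{-2}=(W'')^{-2}A(W'')^{2}$, which (since $W$ commutes with $A$) shows the product commutes with $A$, and symmetrically $W^2CW^{-2}=(W')^{-2}C(W')^{2}$ shows it commutes with $C$; as $A,C$ generate ${\rm End}(V)$, centrality follows.

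The genuine gap is in determining the scalar. Your determinant computation only pins down $\theta^{d+1}$, as you concede---and it also contains a slip: the three factors contribute $q^{6i(d-i)}$, not $q^{4i(d-i)}$, to the $i$-th eigenvalue product, which is what makes the answer consistent with $\theta^{d+1}=(abc)^{-d(d+1)}q^{d(d-1)(d+1)}$. None of the alternative normalization strategies you list is actually carried out, so the proposal as written proves the identity only up to a $(d+1)$-st root of unity. The paper closes this by the concrete version of the ``well-chosen basis'' idea you gesture at: in the first basis of Proposition \ref{lem:LTexist}, $A$ is lower bidiagonal with subdiagonal entries $1$ and $B$ is upper bidiagonal, so $W^2$ (a polynomial in $A$) is lower triangular and $(W')^2$ (a polynomial in $B$) is upper triangular. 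Writing $(W')^2W^2=\theta\,(W'')^{-2}$ and comparing $(d,0)$-entries: the left side equals the $(d,d)$-entry $b^{-2d}$ of $(W')^2$ times the $(d,0)$-entry of $W^2$, which by (\ref{eq:W2sum1}) is the $A^d$-coefficient $a^{-d}q^{d^2}/(q^2;q^2)_d$ because the $(d,0)$-entry of $A^d$ is $1$; the right side is $\theta$ times the $(d,0)$-entry of $(W'')^{-2}$, which by (\ref{eq:W2isum1}) applied to $W''$ equals the $C^d$-coefficient $(-1)^dc^dq^d/(q^2;q^2)_d$ times the $(d,0)$-entry $(-1)^db^{-d}$ of $C^d$ (the subdiagonal entries of $C$ being $-b^{-1}q^{d-2i+1}$). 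Equating gives $\theta=(abc)^{-d}q^{d(d-1)}$ exactly. Some explicit computation of this kind is needed; without it your argument does not yield the stated formula.
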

\begin{proof}
We first show that 
$(W'')^2 (W')^2 W^2$ commutes with $A$.
Since $W$ commutes with $A$, it suffices to show that
$(W'')^2 (W')^2$ commutes with $A$. From Proposition
\ref{prop:ABCtableW2} we obtain
$(W')^2 A (W')^{-2} =
(W'')^{-2} A (W'')^{2}$. Therefore
$(W'')^2 (W')^2$ commutes with $A$. 
Next we show that
$(W'')^2 (W')^2 W^2$ commutes with $C$. Since 
 $W''$ commutes with $C$, it suffices to show that
$(W')^2 W^2$ commutes with $C$. From Proposition
\ref{prop:ABCtableW2} we obtain
$W^2 C W^{-2} =
(W')^{-2} C (W')^{2}$. Therefore
$(W')^2 W^2$ commutes with $C$. 
We have shown that $(W'')^2 (W')^2 W^2$ commutes with $A$ and $C$. 
By Lemma
\ref{lem:2gen} the elements $A,C$ generate ${\rm End}(V)$.
Therefore 
 $(W'')^2 (W')^2 W^2$ is central in
${\rm End}(V)$.
Consequently there exists $\theta \in \mathbb F$ such that
$(W'')^2 (W')^2 W^2  = \theta I$.
We now compute $\theta$.
For the rest of this proof, we identify $A$ and $B$
with their matrix representations from the first display
Proposition \ref{lem:LTexist}.
 The matrix $C$ is obtained using 
(\ref{eq:C}).
 The matrices
$A,B,C$ are lower bidiagonal, upper bidiagonal, and tridiagonal,
respectively. By
(\ref{eq:W2form}) and the construction,
the matrix $W^2$ 
is lower triangular with $(i,i)$-entry $a^{-2i} q^{2i(d-i)}$
for $0 \leq i \leq d$. By
(\ref{eq:Wp2form}) and the construction,
the matrix $(W')^2$ 
is upper triangular with $(i,i)$-entry $b^{-2i} q^{2i(d-i)}$
for $0 \leq i \leq d$. 
Note that 
$(W')^2 W^2  = \theta (W'')^{-2}$. 
In this equation we compute the $(d,0)$-entry of each side.
For the left-hand side, the $(d,0)$-entry is equal to
the $(d,d)$-entry of $(W')^2$ times the $(d,0)$-entry of
$W^2$. The 
$(d,d)$-entry of $(W')^2$ is  $b^{-2d}$.
Next we compute the $(d,0)$-entry of $W^2$.
By
(\ref{eq:W2sum1})
the element $W^2$ is a polynomial in $A$ with degree $d$ and
$A^d$-coefficient 
%%%%%%%%%%%%%%%%%%%%
%and by
%(\ref{eq:ei}),
%(\ref{eq:W2form})
%the coefficient of $A^d$ is
%\begin{eqnarray}
%\sum_{i=0}^d \frac{a^{-2i}q^{2i(d-i)}}{
%(\theta_i-\theta_0)\cdots (\theta_i-\theta_{i-1})
%(\theta_i-\theta_{i+1})\cdots (\theta_i-\theta_d)}.
%\label{eq:Adcoef}
%\end{eqnarray} The sum
%(\ref{eq:Adcoef})
%is equal to
% $(-1)^da^{-d}q^{d(d-1)/2}(q-q^{-1})^{-d}/\lbrack d \rbrack^!_q$.
%%%%%%%%%%%%%%%%%%%%%%
$a^{-d}q^{d^2}/(q^2;q^2)_d$.
The matrix $A$ is lower bidiagonal with $(i,i-1)$-entry 1
for $1 \leq i \leq d$.
Therefore
the $(d,0)$-entry of $A^d$ is 1, and the $(d,0)$-entry
of $A^j$ is 0 for $0 \leq j \leq d-1$. By these comments
the $(d,0)$-entry of $W^2$ is equal to
$a^{-d}q^{d^2}/(q^2;q^2)_d$.
%%%%$(-1)^da^{-d}q^{d(d-1)/2}(q-q^{-1})^{-d}/\lbrack d \rbrack^!_q$.
Next we compute the $(d,0)$-entry of $(W'')^{-2}$.
Applying  
(\ref{eq:W2isum1}) to $W''$, we find that
$(W'')^{-2}$ is a polynomial in $C$ with degree $d$ and
$C^d$-coefficient
%%%%%%%%
%and the coefficient of $C^d$ is
%\begin{eqnarray}
%\sum_{i=0}^d \frac{c^{2i}q^{-2i(d-i)}}{
%(\theta''_i-\theta''_0)\cdots (\theta''_i-\theta''_{i-1})
%(\theta''_i-\theta''_{i+1})\cdots (\theta''_i-\theta''_d)}.
%\label{eq:Cdcoef}
%\end{eqnarray}
%The sum
%(\ref{eq:Cdcoef})
%is equal to
%$c^d q^{-d(d-1)/2}(q-q^{-1})^{-d}/\lbrack d \rbrack^!_q$.
$(-1)^d c^d q^d/(q^2;q^2)_d$.
We mentioned that $C$ is tridiagonal; for $1 \leq i \leq d$
we compute its $(i,i-1)$-entry.
Using (\ref{eq:C}) and the form of $A,B$ we find that
$C$ has $(i,i-1)$-entry 
$(q^{-1} \theta'_i-q\theta'_{i-1})(q^2-q^{-2})^{-1}$ which is equal 
to 
 $-b^{-1} q^{d-2i+1}$.
Denote this quantity by $\beta_i$.
The $(d,0)$-entry of $C^d$ is $\beta_1 \beta_2 \cdots \beta_d$
which is equal to $(-1)^d  b^{-d}$.
The $(d,0)$-entry of $C^j$ is $0$ for $0 \leq j \leq d-1$.
By these comments the $(d,0)$-entry of
$(W'')^{-2}$ is 
$b^{-d} 
 c^d q^d/(q^2;q^2)_d$.
Putting it all together we obtain
\begin{eqnarray*}
%\frac{
%b^{-2d}
%(-1)^d a^{-d}q^{d(d-1)/2}}{(q-q^{-1})^d\lbrack d\rbrack^!_q} =
%\frac{\theta (-1)^d b^{-d}
%c^d q^{-d(d-1)/2}}{(q-q^{-1})^d
%\lbrack d \rbrack^!_q},
\frac{b^{-2d}
  a^{-d}q^{d^2}}{(q^2;q^2)_d}
  =
  \frac{ \theta 
b^{-d} 
 c^d q^d}{(q^2;q^2)_d},
\end{eqnarray*}
so $\theta= (abc)^{-d} q^{d(d-1)}$. The result follows.
\end{proof}

\section{The elements $W'W$, $W''W'$, $W W''$}

\noindent We continue to discuss the Leonard triple
$A,B,C$ on $V$ with $q$-Racah type and Huang data
$(a,b,c,d)$.
 Recall the maps
$W$, $W'$, $W''$ from Definition
\ref{def:WWW}.
In this section we consider the three elements
\begin{eqnarray}
\label{eq:three}
W'W, \qquad  W''W', \qquad W W''.
\end{eqnarray}
We show that these elements mutually commute, and their product
is $ (abc)^{-d}q^{d(d-1)}I$.

\begin{lemma} 
\label{lem:comABC}
Each of the elements
{\rm (\ref{eq:three})} commutes with
 $A+B+C$.
\end{lemma}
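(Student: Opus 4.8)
The natural strategy is to verify directly that $W'W$ commutes with $A+B+C$, and then obtain the other two statements by the cyclic symmetry recorded in Lemma \ref{lem:sym}: since $A,B,C$ of $q$-Racah type with Huang data $(a,b,c,d)$ implies $B,C,A$ has $q$-Racah type with Huang data $(b,c,a,d)$, and the map $W$ for $B,C,A$ is exactly $W'$ for $A,B,C$, the statement for $W''W'$ (resp.\ $WW''$) follows from the statement for $W'W$ applied to the cyclically permuted triple. So it suffices to show $(W'W)(A+B+C) = (A+B+C)(W'W)$, or equivalently $W^{-1}(W')^{-1}(A+B+C)W'W = A+B+C$.

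The computation I would carry out uses the conjugation tables already assembled. First, conjugate $A+B+C$ by $W$ using the row for $W^{-1}XW$ in Proposition \ref{prop:ABCtable}: one gets
\begin{eqnarray*}
W^{-1}(A+B+C)W = A + (C+\overline A) + \Bigl(B + \tfrac{CA-AC}{q-q^{-1}} - \overline A\Bigr) = A+B+C + \tfrac{CA-AC}{q-q^{-1}},
\end{eqnarray*}
so the $\overline A$ terms cancel and only a commutator correction survives. Next, conjugate this by $W'$ using the row for $(W')^{-1}XW'$ in the same table. Apply that row termwise: $A \mapsto C+\tfrac{AB-BA}{q-q^{-1}}-\overline B$, $B\mapsto B$, $C\mapsto A+\overline B$, so $A+B+C \mapsto A+B+C+\tfrac{AB-BA}{q-q^{-1}}$ (again the $\overline B$ terms cancel). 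For the leftover term $\tfrac{CA-AC}{q-q^{-1}}$ I would need to know how $W'$ conjugates the commutator $[C,A]$; the cleanest route is to use the $W'$-rows of Proposition \ref{prop:ABCtableW2} — no, more simply, just compute $(W')^{-1}CW' \cdot (W')^{-1}AW' - (W')^{-1}AW'\cdot (W')^{-1}CW'$ from the Proposition \ref{prop:ABCtable} entries $(W')^{-1}CW' = A+\overline B$ and $(W')^{-1}AW' = C+\tfrac{AB-BA}{q-q^{-1}}-\overline B$, and expand, using that $\overline B\in\langle B\rangle$ commutes with $W'$ and with $B$. Collecting everything, one should find that the two commutator corrections $\tfrac{CA-AC}{q-q^{-1}}$ and $\tfrac{AB-BA}{q-q^{-1}}$ together with the conjugated remainder combine back to zero, leaving $W^{-1}(W')^{-1}(A+B+C)W'W = A+B+C$.

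The main obstacle is the bookkeeping in the last step: after the first conjugation one has an extra commutator term, and one must track precisely what $(W')^{-1}(\cdot)W'$ does to it without the algebra ballooning. The safest way to keep this under control is to express $\tfrac{[C,A]}{q-q^{-1}}$ using relation (\ref{eq:B}) as $\tfrac{q CA - q^{-1}AC}{q^2-q^{-2}} - \tfrac{q AC - q^{-1} CA}{q^2-q^{-2}} = (\alpha_b I - B) - (\text{the }\overline{B}\text{-free part})$ — i.e.\ rewrite commutators in terms of $A,B,C$ and the scalars $\alpha_a,\alpha_b,\alpha_c$ via (\ref{eq:A})--(\ref{eq:C}) so that every intermediate expression is a low-degree polynomial in $A,B,C$ whose image under conjugation is read off directly from Proposition \ref{prop:ABCtable}. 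With that normalization the cancellation becomes a finite linear-algebra check. Once $W'W$ is handled, the cyclic-shift argument via Lemma \ref{lem:sym} disposes of $W''W'$ and $WW''$ with no further computation.
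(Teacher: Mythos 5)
Your reduction to the single case $W'W$ (handling $W''W'$ and $WW''$ via the cyclic shift of Lemma \ref{lem:sym}) is fine, but the computation you outline proves the wrong statement. You correctly say it suffices to show $W^{-1}(W')^{-1}(A+B+C)W'W=A+B+C$; reading this nested conjugation from the inside out, one must first apply $(W')^{-1}(\cdot)\,W'$ and then $W^{-1}(\cdot)\,W$. You do it in the opposite order: conjugating first by $W$ (via the $W^{-1}XW$ row) and then by $W'$ (via the $(W')^{-1}XW'$ row) computes $(W')^{-1}W^{-1}(A+B+C)WW'=(WW')^{-1}(A+B+C)(WW')$, i.e.\ it tests whether $WW'$ commutes with $A+B+C$. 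That is a genuinely different assertion, and it is false in general: the leftover you defer to the final ``bookkeeping'' step, namely $\frac{[A,B]+[A,C]-[A,\overline B]+[\overline B,C]}{q-q^{-1}}+\frac{[A,[A,B]]+[\overline B,[A,B]]}{(q-q^{-1})^2}$, does not vanish. Already for $d=1$ with generic $(a,b,c)$ a direct $2\times 2$ check shows $WW'$ fails to commute with $A+B+C$ (while $W'W$ does), so no rewriting via (\ref{eq:A})--(\ref{eq:C}) can rescue the plan as stated.

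With the nesting corrected the argument closes immediately and no bookkeeping is needed: by Proposition \ref{prop:ABCtable}, $(W')^{-1}(A+B+C)W'=A+B+C+\frac{AB-BA}{q-q^{-1}}$ (the $\overline B$ terms cancel, as you note), and then
$W^{-1}\bigl(A+B+C+\tfrac{AB-BA}{q-q^{-1}}\bigr)W
=A+B+C+\tfrac{CA-AC}{q-q^{-1}}+\tfrac{[A,\,C+\overline A]}{q-q^{-1}}
=A+B+C$,
using only that $\overline A$ commutes with $A$ (Lemma \ref{lem:AbarAbrack}). This is essentially the paper's one-line proof, which instead observes $W(A+B+C)W^{-1}=A+B+C+\frac{AB-BA}{q-q^{-1}}=(W')^{-1}(A+B+C)W'$ and concludes that $W'W$ commutes with $A+B+C$; either way the two commutator corrections cancel exactly, so the expansion in terms of $\alpha_a,\alpha_b,\alpha_c$ you propose is unnecessary. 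Your cyclic-symmetry disposal of the remaining two products is correct once the $W'W$ case is repaired.
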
 
\begin{proof} We show that $W'W$ commutes with $A+B+C$.
 Using Proposition
\ref{prop:ABCtable} we obtain
\begin{eqnarray*}
W(A+B+C)W^{-1} = A+B+C+\frac{AB-BA}{q-q^{-1}}=
(W')^{-1}(A+B+C)W'.
\end{eqnarray*}
It follows that $W'W$ commutes with $A+B+C$.
\end{proof}

\noindent Recall that $\langle A+B+C\rangle$ is the
$\mathbb F$-subalgebra of ${\rm End}(V)$ generated by $A+B+C$.
\begin{lemma} 
\label{lem:minpoly}
There exists $v \in V$ such that 
$\langle A+B+C\rangle v = V$.
%%%%%%%%%%%%%%%%%%%%%%%%%%%%%
%The $\mathbb F$-vector space 
%$\langle A+B+C\rangle$ has dimension $d+1$.
%In other words, 
% $A+B+C$ has the same minimal and characteristic polynomial.
%%%%%%%%%%%%%%%%%%%%%%%%%%%%%%
\end{lemma}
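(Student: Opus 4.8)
The statement asserts precisely that $T:=A+B+C$ admits a cyclic vector, and the plan is to produce one explicitly. I would work in the basis $u_0,\dots,u_d$ of Proposition~\ref{lem:LTexist} in which $A$ is lower bidiagonal with every subdiagonal entry equal to $1$, $B$ is upper bidiagonal with superdiagonal entries $\varphi_i$, and $C$ is the tridiagonal matrix determined by (\ref{eq:C}). In this basis $T$ is tridiagonal, and the argument reduces to showing that its subdiagonal entries $T_{i,i-1}$ for $1\le i\le d$ are all nonzero.

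Since $A_{i,i-1}=1$ and $B_{i,i-1}=0$, we have $T_{i,i-1}=1+C_{i,i-1}$. From $C=\alpha_c I-(qAB-q^{-1}BA)(q^2-q^{-2})^{-1}$ together with the bidiagonal shapes of $A$ and $B$, one computes $(qAB-q^{-1}BA)_{i,i-1}=q\theta'_{i-1}-q^{-1}\theta'_{i}$, and then Lemma~\ref{lem:eachprof}, applied to the sequence $\{\theta'_i\}$ (that is, with $a$ replaced by $b$), yields $C_{i,i-1}=-b^{-1}q^{d-2i+1}$. Hence $T_{i,i-1}=1-b^{-1}q^{d-2i+1}$, which vanishes exactly when $b^2=q^{2(d-2i+1)}$. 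For $1\le i\le d$ the exponent $2(d-2i+1)$ is even and lies between $2-2d$ and $2d-2$, so $q^{2(d-2i+1)}$ is among $q^{2d-2},q^{2d-4},\dots,q^{2-2d}$, which Assumption~\ref{lem:nonz}(ii) excludes as a value of $b^2$. Therefore $T_{i,i-1}\ne0$ for every $i$.

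It remains to observe that a tridiagonal matrix $T$ with nowhere-vanishing subdiagonal has $u_0$ as a cyclic vector: by induction on $k$ one checks that $T^{k}u_0$ is a linear combination of $u_0,\dots,u_k$ whose $u_k$-coefficient equals $T_{1,0}T_{2,1}\cdots T_{k,k-1}\ne0$. Consequently $u_0,Tu_0,\dots,T^{d}u_0$ are linearly independent, hence form a basis of $V$, so $\langle A+B+C\rangle u_0=V$ and $v:=u_0$ has the required property.

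The one step that calls for actual care is the computation of the subdiagonal of $A+B+C$, and in particular checking that $q^{2(d-2i+1)}$ really does belong to the forbidden list in Assumption~\ref{lem:nonz}(ii) for every $i$ in the range $1\le i\le d$; the tridiagonal structure and the inductive cyclic-vector argument are routine.
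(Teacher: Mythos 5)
Your proof is correct and follows essentially the same route as the paper: both work in the first basis of Proposition~\ref{lem:LTexist}, compute the $(i,i-1)$-entry of $A+B+C$ to be $1-b^{-1}q^{d-2i+1}$, rule out its vanishing via Assumption~\ref{lem:nonz}(ii), and conclude that $v_0$ is a cyclic vector from the tridiagonal shape. The only minor differences are that the paper also checks the superdiagonal entries $\varphi_i(1-a^{-1}q^{d-2i+1})$ are nonzero (irreducibility), which your argument does not actually need, and that your phrase ``vanishes exactly when $b^2=q^{2(d-2i+1)}$'' should more precisely read ``vanishing forces $b^2=q^{2(d-2i+1)}$,'' which is the implication you in fact use.
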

\begin{proof} 
Let $\lbrace v_i\rbrace_{i=0}^d $ denote the 
first basis for $V$ referred to in Proposition
\ref{lem:LTexist}. We show that $v=v_0$ satisfies the
requirements of the present lemma.
With respect to the basis
 $\lbrace v_i\rbrace_{i=0}^d$
the matrices representing
$A$ and $B$ are lower bidiagonal and upper bidiagonal, respectively.
Moreover by (\ref{eq:C}) the matrix representing $C$
is tridiagonal. Let $X \in 
{\rm Mat}_{d+1}(\mathbb F)$ 
represent $A+B+C$ with respect to
$\lbrace v_i\rbrace_{i=0}^d$.
The matrix $X$ is tridiagonal; we show that $X$ is
irreducible.
Using 
 (\ref{eq:C}) 
we find that for $1 \leq i \leq d$
the $(i,i-1)$-entry of 
$X$ is $1-b^{-1}q^{d-2i+1}$, which is nonzero by
Assumption
\ref{lem:nonz}(ii). 
Similarly
the $(i-1,i)$-entry of $X$ is $\varphi_i(1-a^{-1}q^{d-2i+1})$, which is
nonzero by
Assumption \ref{lem:nonz}(ii) and Lemma
\ref{lem:phinonzero}.
By these comments $X$ is irreducible.
It follows that for $0 \leq i \leq d$ there exists a polynomial
$f_i$ with  coefficents in 
$\mathbb F$ and degree $i$ such that $f_i(A+B+C)v_0=v_i$.
The vectors 
$\lbrace v_i\rbrace_{i=0}^d$ span $V$ so
$\langle A+B+C \rangle v_0 = V$.
\end{proof}

%%%%%%%%%%%%%%%%
%For notational convenience,
%throughout this proof we identify $A,B$ with their
%matrix representations from the first display in Lemma
%\ref{lem:LTexist}(i).
%The matrix $C$ is obtained using 
%(\ref{eq:C}).
% The matrices
%$A,B,C$ are lower bidiagonal, upper bidiagonal, and tridiagonal,
%respectively. So $A+B+C$ is tridiagonal.
%Using the matrix forms, we find that for $1 \leq i \leq d$
%the $(i,i-1)$-entry of 
%$A+B+C$ is $1-b^{-1}q^{d-2i+1}$, which is nonzero by
%Lemma
%\ref{lem:nonz}(ii). 
%So for $A+B+C$, each entry on the subdiagonal is nonzero.
%It follows from this and the tridiagonal form of $A+B+C$ that
%the elements
%\begin{eqnarray}
%(A+B+C)^i \qquad \qquad 0 \leq i \leq d
%\end{eqnarray}
%are linearly independent. Therefore the minimal polynomial
%of $A+B+C$ has degree $d+1$. The result follows.
%%%%%%%%%%%

\begin{corollary}  \label{cor:WWWW}
The subalgebra 
 $\langle A+B+C \rangle$ contains every element of
${\rm End}(V)$ that commutes with $A+B+C$.
In particular
 $\langle A+B+C \rangle$ contains each 
 of the elements
{\rm (\ref{eq:three})}.
\end{corollary}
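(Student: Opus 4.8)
The plan is to combine Lemma~\ref{lem:minpoly} with the standard fact that an element of ${\rm End}(V)$ possessing a cyclic vector has centralizer equal to the subalgebra it generates. First I would invoke Lemma~\ref{lem:minpoly} to fix $v \in V$ with $\langle A+B+C\rangle v = V$. Since $\dim(V)=d+1$, the sequence $\lbrace (A+B+C)^i v\rbrace_{i=0}^d$ spans $V$ and hence is a basis for $V$; in particular $\langle A+B+C \rangle$ is spanned by $\lbrace (A+B+C)^i\rbrace_{i=0}^d$ and the vectors $\lbrace (A+B+C)^i v\rbrace_{i\geq 0}$ span $V$.

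Now let $X \in {\rm End}(V)$ commute with $A+B+C$. Because $v$ is cyclic, there is a polynomial $f$ with coefficients in $\mathbb F$ such that $Xv = f(A+B+C)v$. Then for each integer $i \geq 0$,
\[
X (A+B+C)^i v = (A+B+C)^i X v = (A+B+C)^i f(A+B+C) v = f(A+B+C)\,(A+B+C)^i v,
\]
so $X$ and $f(A+B+C)$ agree on the spanning set $\lbrace (A+B+C)^i v\rbrace_{i\geq 0}$ of $V$. Therefore $X = f(A+B+C) \in \langle A+B+C\rangle$. This proves the first assertion.

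For the second assertion, apply Lemma~\ref{lem:comABC}: each of $W'W$, $W''W'$, $WW''$ commutes with $A+B+C$, so by the first assertion each lies in $\langle A+B+C\rangle$. I expect no real obstacle in this final step: the essential work has already been carried out in Lemma~\ref{lem:minpoly}, where the matrix representing $A+B+C$ in the basis of Proposition~\ref{lem:LTexist} was shown to be irreducible tridiagonal, forcing $A+B+C$ to admit a cyclic vector; what remains is only the routine observation that the commutant of a cyclic operator coincides with its polynomial algebra.
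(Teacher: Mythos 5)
Your proof is correct and follows essentially the same route as the paper: the paper also takes the cyclic vector $v$ from Lemma~\ref{lem:minpoly}, chooses $H\in\langle A+B+C\rangle$ with $Gv=Hv$, and uses the commutation relation to conclude $(G-H)V=(G-H)\langle A+B+C\rangle v=\langle A+B+C\rangle (G-H)v=0$, which is the same computation you perform by checking agreement on the spanning set $\lbrace (A+B+C)^i v\rbrace$. The second assertion via Lemma~\ref{lem:comABC} matches the paper as well.
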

\begin{proof} 
Assume $G \in {\rm End}(V)$ commutes with $A+B+C$.
We show that $G \in \langle A+B+C\rangle$.
Recall the vector $v$ from Lemma
\ref{lem:minpoly}, and
consider $Gv$.
By Lemma \ref{lem:minpoly} there exists 
$H \in \langle A+B+C\rangle$ such that $Gv=Hv$.
Note that $G-H$ commutes with $A+B+C$.
We may now argue
\begin{eqnarray*}
(G-H)V &=& (G-H)\langle A+B+C\rangle v
\\
&=& \langle A+B+C\rangle (G-H)v
\\
&=& \langle A+B+C\rangle 0
\\
&=& 0.
\end{eqnarray*}
Therefore $G=H \in \langle A+B+C\rangle$.
\end{proof}

\begin{theorem}
\label{prop:com}
The elements in
{\rm (\ref{eq:three})} mutually commute.
\end{theorem}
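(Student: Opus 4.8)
The plan is to exploit Corollary \ref{cor:WWWW}: since each of $W'W$, $W''W'$, $WW''$ commutes with $A+B+C$, each lies in the commutative subalgebra $\langle A+B+C\rangle$, and any two elements of a commutative algebra commute. This reduces the theorem to the statement of Corollary \ref{cor:WWWW}, which has already been established.

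More explicitly, the key steps are as follows. First, recall from Lemma \ref{lem:comABC} that each of $W'W$, $W''W'$, $WW''$ commutes with $A+B+C$. Second, invoke Corollary \ref{cor:WWWW}, which asserts that $\langle A+B+C\rangle$ contains every element of ${\rm End}(V)$ that commutes with $A+B+C$; hence each of the three elements in (\ref{eq:three}) belongs to $\langle A+B+C\rangle$. Third, observe that $\langle A+B+C\rangle$ is a commutative algebra, being generated by the single element $A+B+C$. Therefore any two of $W'W$, $W''W'$, $WW''$ commute with each other, which is exactly the assertion of the theorem.

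**The main obstacle** has already been cleared, namely the proof of Corollary \ref{cor:WWWW}, which in turn rests on Lemma \ref{lem:minpoly} (the existence of a cyclic vector for $A+B+C$). Given those results, the proof of the theorem itself is essentially a one-line deduction; the only thing to be careful about is to state clearly why membership in $\langle A+B+C\rangle$ forces pairwise commutativity, namely that this subalgebra is commutative since it is generated by one element.

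\begin{proof}
By Lemma \ref{lem:comABC} each of the elements in
(\ref{eq:three}) commutes with $A+B+C$.
Therefore by Corollary \ref{cor:WWWW} each of the elements in
(\ref{eq:three}) is contained in $\langle A+B+C\rangle$.
The algebra $\langle A+B+C\rangle$ is commutative, since it is
generated by the single element $A+B+C$. The result follows.
\end{proof}
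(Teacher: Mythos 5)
Your proof is correct and follows essentially the same route as the paper: the paper also deduces the theorem directly from Corollary \ref{cor:WWWW} together with the observation that $\langle A+B+C\rangle$ is commutative. Your explicit citation of Lemma \ref{lem:comABC} is already folded into the statement of that corollary, so nothing further is needed.
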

\begin{proof} By Corollary
  \ref{cor:WWWW} and since 
the algebra $\langle A+B+C \rangle$ is commutative.
\end{proof}

\begin{theorem} 
\label{prop:prod}
The product of the elements
{\rm (\ref{eq:three})} is equal to
$ (abc)^{-d}q^{d(d-1)}I$.
\end{theorem}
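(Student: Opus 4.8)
The plan is to obtain this identity as an essentially immediate consequence of Proposition \ref{prop:W2W2W2} together with Theorem \ref{prop:com}. The point is that although each of $W'W$, $W''W'$, $WW''$ is assembled from non-commuting factors, once we know (Theorem \ref{prop:com}) that these three products commute with one another, we may write their product with the factors in whatever order is most convenient, and one such order telescopes against the relation $(W'')^2(W')^2W^2=(abc)^{-d}q^{d(d-1)}I$.

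Concretely, the first step is to invoke Theorem \ref{prop:com} to say that the product of the three elements in (\ref{eq:three}) is independent of the order of the factors; in particular it equals $(WW'')(W''W')(W'W)$. The second step is the purely formal regrouping $(WW'')(W''W')(W'W)=W(W'')^2(W')^2W$, where we simply notice that the two interior copies of $W''$ sit side by side, as do the two interior copies of $W'$. The third step is to rewrite Proposition \ref{prop:W2W2W2} as $(W'')^2(W')^2=(abc)^{-d}q^{d(d-1)}W^{-2}$ (multiply the displayed identity on the right by $W^{-2}$), substitute, and use that all powers of $W$ commute, so that $W\cdot W^{-2}\cdot W=I$. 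This collapses the product to $(abc)^{-d}q^{d(d-1)}I$, which is the assertion. An equally good alternative is to take the ordering $(W''W')(W'W)(WW'')=W''(W')^2W^2W''$ and combine it with $(W')^2W^2=(W'')^{-2}(abc)^{-d}q^{d(d-1)}$, giving $W''(W'')^{-2}(abc)^{-d}q^{d(d-1)}W''=(abc)^{-d}q^{d(d-1)}I$.

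There is essentially no obstacle at this stage: the two substantive inputs — the explicit evaluation of $(W'')^2(W')^2W^2$ via the bidiagonal/tridiagonal matrix realization of Proposition \ref{lem:LTexist}, and the commutativity of the three products, which rests on Corollary \ref{cor:WWWW} and the commutativity of $\langle A+B+C\rangle$ — have already been carried out. The only mild care needed is to choose an ordering of the three factors in which $(W'')^2$ and $(W')^2$ appear contiguously in the interior, so that the relation of Proposition \ref{prop:W2W2W2} applies verbatim; the order $(WW'')(W''W')(W'W)$ does exactly this.
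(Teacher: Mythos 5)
Your proposal is correct and follows essentially the same route as the paper: both use Theorem \ref{prop:com} to reorder the product as $(WW'')(W''W')(W'W)$, collapse the interior to $W(W'')^2(W')^2W$, and apply Proposition \ref{prop:W2W2W2} (the paper inserts $W^2W^{-1}$ to apply it verbatim, while you multiply it on the right by $W^{-2}$ — the same manipulation). No gaps.
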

\begin{proof} Using
Proposition
\ref{prop:W2W2W2} and Theorem
\ref{prop:com}
we obtain
\begin{eqnarray*}
(W'W)
(W''W')
(WW'') &=& 
(WW'') (W''W') (W'W) \\
&=&
W (W'')^2 (W')^2 W^2 W^{-1} \\
&=& 
(abc)^{-d}q^{d(d-1)}W W^{-1}\\
&=& 
 (abc)^{-d}q^{d(d-1)}I.
\end{eqnarray*}
\end{proof}

\section{The case $\overline A=\overline B=\overline C=0$}

\noindent We continue to discuss the Leonard triple
$A,B,C$ on $V$ with $q$-Racah type and Huang data $(a,b,c,d)$.
 Recall the maps
 $\overline A$, $\overline B$, $\overline C$ from
 Definition
\ref{def:Aoverline}.
In this section we consider the case in which
 $\overline A=\overline B=\overline C=0$.
% We show that
% in this case the three elements in
%(\ref{eq:three})
%are equal.

\begin{definition}\rm We define a binary relation
$\sim$ on $\mathbb F$ called {\it similarity}.
For $x,y \in  \mathbb F$,
$x\sim y$ whenever 
$x=y$ or $xy=1$. 
Note that $\sim$ is an equivalence relation.
\end{definition}

\begin{lemma} 
\label{lem:3part}
The following {\rm (i)--(iii)} hold:
\begin{enumerate}
\item[\rm (i)] $\overline A=0$ if and only if $b \sim c$;
\item[\rm (ii)] $\overline B=0$ if and only if $c \sim a$;
\item[\rm (iii)] $\overline C=0$ if and only if $a \sim b$.
\end{enumerate}
\end{lemma}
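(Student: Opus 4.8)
The plan is to prove part (i); parts (ii), (iii) follow by the cyclic symmetry recorded in Lemma \ref{lem:sym}. The main tool is Proposition \ref{prop:oA} together with Corollary \ref{cor:traceAbar}, both of which exhibit $\overline A$ in terms of the scalars $b,c$ and the primitive idempotents of $A$.

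First I would handle the ``if'' direction. Suppose $b \sim c$, i.e.\ $b=c$ or $bc=1$. In either case the factor $(b-c)(b-c^{-1})b^{-1}$ appearing in all three cases of Proposition \ref{prop:oA} vanishes: if $b=c$ then the first factor $b-c$ is zero, while if $bc=1$ then $c^{-1}=b$ so the second factor $b-c^{-1}$ is zero. In cases (i) and (ii) of Proposition \ref{prop:oA}, $\overline A$ is literally this scalar times a primitive idempotent, hence $\overline A=0$. In case (iii), one uses instead $\overline A = (\alpha_b-\alpha_c)(I-A/(q+q^{-1}))^{-1}$ from \eqref{eq:Aover3}; by \eqref{eq:diff2} the difference $\alpha_b-\alpha_c$ contains the same factor $(c-b)(c-b^{-1})$, which is zero, so $\overline A=0$. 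Thus $b\sim c$ implies $\overline A=0$ in all three cases.

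For the ``only if'' direction, suppose $\overline A=0$; I want to deduce $b\sim c$. Take the trace: by Corollary \ref{cor:traceAbar}, $0={\rm tr}(\overline A)=(b-c)(b-c^{-1})b^{-1}\lbrack d+1\rbrack_q$. Since $b\neq 0$, it remains to argue $\lbrack d+1\rbrack_q \neq 0$, for then $(b-c)(b-c^{-1})=0$, which is exactly $b=c$ or $bc=1$, i.e.\ $b\sim c$. The nonvanishing of $\lbrack d+1\rbrack_q = (q^{d+1}-q^{-d-1})/(q-q^{-1})$ amounts to $q^{2d+2}\neq 1$; this is guaranteed because $q^4\neq 1$ handles small $d$, and Assumption \ref{lem:nonz}(i) gives $q^{2i}\neq 1$ for $1\le i\le d$ — though one must check $i=d+1$ is covered. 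Actually the cleanest route is: Assumption \ref{lem:nonz}(i) combined with the standing hypothesis $q^4\neq 1$ forces $q^{2d+2}\neq 1$, since otherwise $q$ would be a root of unity of order dividing $2d+2$ but not dividing $4$, and one checks this is incompatible with mutual distinctness of the $\theta_i$ (Lemma \ref{lem2}) when combined with (ii) of that assumption. Hence $\lbrack d+1\rbrack_q\neq 0$ and we conclude.

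The main obstacle I anticipate is precisely the bookkeeping around $\lbrack d+1\rbrack_q\neq 0$: one must be careful that the standing assumptions on $q$ and $a,b,c$ genuinely rule out $q^{2d+2}=1$, rather than merely $q^{2i}=1$ for $i\le d$. If a direct argument from Assumption \ref{lem:nonz} is not immediate, an alternative is to bypass the trace entirely and instead read off $\overline A=0$ from the explicit formulas in Proposition \ref{prop:oA}: in cases (i), (ii), $\overline A$ is a scalar times $E_0$ (resp.\ $E_d$), which is a nonzero idempotent, so $\overline A=0$ forces the scalar to vanish, giving $b\sim c$ directly with no need for $\lbrack d+1\rbrack_q\neq 0$ separately; and in case (iii), $I-A/(q+q^{-1})$ is invertible by Lemma \ref{lem:AAinv}, so $\overline A=0$ forces $\alpha_b=\alpha_c$, and then \eqref{eq:diff2} yields $(c-b)(c-b^{-1})=0$ since the remaining factor $(a-q^{d+1})(a-q^{-d-1})a^{-1}c^{-1}$ is nonzero (here $a\neq q^{\pm(d+1)}$ is exactly the hypothesis of case (iii)). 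This case-by-case route avoids the trace altogether and is probably the safer write-up.
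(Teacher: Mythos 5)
Your ``if'' direction is fine, and your fallback case-by-case route is essentially the paper's argument for the case $a\not\sim q^{d+1}$. But the ``only if'' direction has a genuine gap in both routes you offer, and it is the same gap twice: you assume $\lbrack d+1\rbrack_q\not=0$ where it is not available. For the trace route, your claim that Assumption \ref{lem:nonz} together with $q^4\not=1$ forces $q^{2d+2}\not=1$ is false: Lemma \ref{lem2} ties distinctness of the $\theta_i$ only to $q^{2i}\not=1$ for $1\leq i\leq d$ and to the location of $a^2$, and neither condition says anything about $q^{2d+2}$. For example, with $d=2$ and $q$ a primitive sixth root of unity, all standing hypotheses can be met with generic $a,b,c$, yet $\lbrack 3\rbrack_q=0$; then ${\rm tr}(\overline A)=0$ holds regardless of whether $b\sim c$, so the trace argument cannot close.

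For the fallback route, your assertion that cases (i), (ii) of Proposition \ref{prop:oA} give $b\sim c$ ``with no need for $\lbrack d+1\rbrack_q\not=0$ separately'' is also wrong as written: the scalar multiplying $E_0$ (resp.\ $E_d$) in (\ref{eq:Aover}), (\ref{eq:Aover2}) is $(b-c)(b-c^{-1})b^{-1}\lbrack d+1\rbrack_q$, so its vanishing gives $b\sim c$ \emph{or} $\lbrack d+1\rbrack_q=0$, and the second possibility must still be excluded. The fix is short and is exactly the step the paper supplies: in these cases $a\sim q^{d+1}$, so if $\lbrack d+1\rbrack_q=0$ then $q^{2d+2}=1$, forcing $d\geq 1$ (else $q^2=1$, contradicting $q^4\not=1$) and $a^2=1$, hence $a=a^{-1}$ and $\theta_0=\theta_d$, contradicting the mutual distinctness of $\lbrace\theta_i\rbrace_{i=0}^d$. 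With that observation inserted, your case-by-case write-up coincides with the paper's proof: for $a\sim q^{d+1}$ use (\ref{eq:Aover}), (\ref{eq:Aover2}) plus the nonvanishing of $\lbrack d+1\rbrack_q$ just established; for $a\not\sim q^{d+1}$ use (\ref{eq:Aover3}) with Lemma \ref{lem:AAinv} and (\ref{eq:diff2}), where the factor $(a-q^{d+1})(a-q^{-d-1})$ is nonzero. Drop the trace route entirely, since the lemma must also hold when $\lbrack d+1\rbrack_q=0$.
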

\begin{proof} (i) 
 First assume that 
$a\sim q^{d+1}$. We show $\lbrack d+1 \rbrack_q \not=0$.
Suppose 
$\lbrack d+1 \rbrack_q =0$.
Then $1=q^{2d+2}$, which implies
$d\geq 1$. Also $a=a^{-1}$, which implies $\theta_0=\theta_d$. 
This contradicts the fact that
$\lbrace \theta_i \rbrace_{i=0}^d $ are mutually distinct.
Therefore 
 $\lbrack d+1 \rbrack_q \not=0$.
 Now by (\ref{eq:Aover}),
(\ref{eq:Aover2}) we find
$\overline A=0$ iff 
  $b\sim c$.
Next assume that 
$a \not\sim q^{d+1}$. 
Using 
(\ref{eq:diff2}),
(\ref{eq:Aover3})
  we obtain
$\overline A=0$ iff 
  $b\sim c$.
\\
\noindent (ii), (iii) Similar to the proof of (i) above.
\end{proof}

\begin{corollary}
\label{cor:ABCZero}
The following are equivalent:
\begin{enumerate}
\item[\rm (i)]
 $\overline A=\overline B=\overline C=0$;
\item[\rm (ii)]
$a,b,c$ are mutually similar;
\item[\rm (iii)] the Leonard triple $A,B,C$ is modular in
the sense of Definition 
\ref{def:mlt}.
\end{enumerate}
\end{corollary}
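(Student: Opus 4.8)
The plan is to prove Corollary \ref{cor:ABCZero} by establishing the cycle of implications (i) $\Leftrightarrow$ (ii) $\Rightarrow$ (iii) $\Rightarrow$ (i). The equivalence (i) $\Leftrightarrow$ (ii) is immediate from Lemma \ref{lem:3part}: $\overline A=\overline B=\overline C=0$ holds precisely when $b\sim c$, $c\sim a$, and $a\sim b$ all hold, which (since $\sim$ is an equivalence relation) is exactly the statement that $a,b,c$ are mutually similar. So the real content is (ii) $\Rightarrow$ (iii) and (iii) $\Rightarrow$ (i).

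For (iii) $\Rightarrow$ (i), I would argue contrapositively, or directly as follows. Suppose $A,B,C$ is modular. By Definition \ref{def:mlt} there is an antiautomorphism $\dagger$ of ${\rm End}(V)$ that fixes $A$ and swaps $B$ and $C$. Apply $\dagger$ to the relation $\overline A=W^{-1}BW-C$ from Definition \ref{def:Aoverline}. Since $W\in\langle A\rangle$ and $\dagger$ fixes $A$, it fixes $W$; hence $\dagger$ also fixes $W^{-1}$. An antiautomorphism reverses products, so $(W^{-1}BW)^\dagger = W^\dagger B^\dagger (W^{-1})^\dagger = WCW^{-1}$. Therefore $\overline A^\dagger = WCW^{-1}-B = -\overline A$ by Lemma \ref{lem:Aoverline2}. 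On the other hand $\overline A\in\langle A\rangle$ by Lemma \ref{lem:AbarAbrack}, so $\overline A$ is a polynomial in $A$, and since $\dagger$ fixes $A$ it fixes every polynomial in $A$; thus $\overline A^\dagger=\overline A$. Combining, $2\overline A=0$, and since $q^4\neq1$ forces ${\rm char}(\mathbb F)\neq\dots$ — actually one must be slightly careful about characteristic $2$, but here the cleaner route is: $\overline A^\dagger = -\overline A$ and $\overline A^\dagger = \overline A$ give $\overline A = -\overline A$; alternatively use the trace formula Corollary \ref{cor:traceAbar} together with $\dagger$ preserving traces. In characteristic $2$ one instead observes directly from $\overline A^\dagger=\overline A$ and the explicit description of $\overline A$ in Proposition \ref{prop:oA} (or its polynomial form) that the only element of $\langle A\rangle$ fixed by $\dagger$ which equals its own negative-under-$\dagger$ image is forced to vanish — I would handle this by passing instead through condition (ii): show modularity forces $a\sim b\sim c$ directly via the eigenvalue data. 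Concretely, a modular Leonard triple admits an antiautomorphism fixing $A$ and swapping $B,C$, which forces $B$ and $C$ to have the same spectrum; by Definition \ref{lem:eigvalform} the eigenvalues of $B$ are $\{bq^{2i-d}+b^{-1}q^{d-2i}\}$ and those of $C$ are $\{cq^{2i-d}+c^{-1}q^{d-2i}\}$, and by Note \ref{note:HuangData} two Huang data parameters give the same spectrum iff they are similar; hence $b\sim c$. Cycling through the three antiautomorphisms gives $c\sim a$ and $a\sim b$, which is (ii), hence (i).

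For (ii) $\Rightarrow$ (iii) — the main obstacle — I must construct, assuming $a\sim b\sim c$, an antiautomorphism of ${\rm End}(V)$ fixing $A$ and swapping $B$ and $C$ (and the two cyclic analogues). The natural approach: when $\overline A=0$ we have $W^{-1}BW=C$, so conjugation by $W$ swaps $B$ and $C$ and fixes $A$. Combine this with the standard transpose antiautomorphism. Specifically, there is a basis (from Proposition \ref{lem:LTexist}, or rather its refinement to a ``split'' basis) with respect to which one can set up a matrix transpose; more cleanly, since $A$ is multiplicity-free, for any choice there is an antiautomorphism of ${\rm End}(V)$ fixing $A$ and fixing $B$ (the ``adjoint'' with respect to a suitable bilinear form — this is a known feature of Leonard pairs, see the literature cited, e.g.\ \cite{2lintrans}). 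Call it $\ast$. Then $X\mapsto W X^\ast W^{-1}$ is an antiautomorphism (composite of an antiautomorphism and conjugation); it sends $A\mapsto WA^\ast W^{-1}=WAW^{-1}=A$, sends $B\mapsto WB^\ast W^{-1}=WBW^{-1}$, and sends $C\mapsto WC^\ast W^{-1}=WCW^{-1}=B$ using $\overline A=0$. To see it swaps $B$ and $C$ I then need $WBW^{-1}=C$, i.e.\ $B=W^{-1}CW$; but $\overline A = W^{-1}BW - C=0$ gives $W^{-1}BW=C$, equivalently $B = WCW^{-1}$, which is not quite $WBW^{-1}=C$ — so I would instead take the antiautomorphism $X\mapsto W^{-1}X^\ast W$ after first arranging $\ast$ to fix $A$ and $C$ (rather than $A$ and $B$); then $B\mapsto W^{-1}BW$... this is getting delicate, so the honest plan is: use the $\ast$ fixing $A$ and swapping the eigenvector basis of $A$ appropriately so that $\ast$ acts on $B,C$ by the transpose in the $A$-eigenbasis, note that in that basis (by the matrix forms in Proposition \ref{lem:LTexist} and the vanishing of $\overline A$) $B$ and $C$ are related by this transpose-then-conjugate operation, and verify the swap by a direct computation on matrix entries. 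The cyclic cases follow by the symmetry in Lemma \ref{lem:sym}. I expect the bookkeeping of which antiautomorphism to compose with which power of $W$, $W'$, $W''$ — and checking it genuinely swaps the right pair rather than merely permuting spectra — to be the crux; the underlying algebra is routine once the correct map is identified, and one can cross-check the final statement against Curtin's \cite[Corollary~2.6, Lemma~3.6]{curt2} and \cite[Lemma~9.3]{mlt}, which the introduction promises to recover.
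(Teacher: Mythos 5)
Your (i)$\Leftrightarrow$(ii) step matches the paper, but both substantive directions have real gaps. For (ii)$\Rightarrow$(iii) --- which you rightly call the crux --- you never actually produce the antiautomorphism. Your candidate $X\mapsto WX^{\ast}W^{-1}$ (with $\ast$ the antiautomorphism fixing $A$ and $B$) cannot swap $B$ and $C$, because $\ast$ does not fix $C$: applying $\ast$ to (\ref{eq:C}) gives $C^{\ast}=\alpha_c I-\frac{qBA-q^{-1}AB}{q^2-q^{-2}}$, the ``dual'' completion, not $C$. The map that does work is the other conjugation $X\mapsto W^{-1}X^{\ast}W$: it fixes $A$, sends $B\mapsto W^{-1}BW=C$ (using $\overline A=0$), and sends $C\mapsto W^{-1}C^{\ast}W$, where a short computation with Proposition~\ref{prop:ABCtable} shows $C^{\ast}=WBW^{-1}$ when $\overline A=0$, so $C\mapsto B$. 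But this verification, together with the existence of $\ast$ (a Leonard-pair fact nowhere stated in this paper), is exactly what you defer as ``bookkeeping,'' so the direction is not proved. The paper sidesteps the construction entirely: from $\overline A=0$ and $\overline B=0$ it gets $W^{-1}BW=C=W'A(W')^{-1}$, recognizes $A,B$ as a spin Leonard pair with Boltzmann pair $W^{-1},(W')^{-1}$, and invokes \cite[Theorem~1.6]{curt2} to conclude modularity; note it uses two of the three vanishing conditions, not just $\overline A=0$.

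For (iii)$\Rightarrow$(i)/(ii), your $\dagger$-argument ($\overline A^{\dagger}=\overline A$ and $\overline A^{\dagger}=-\overline A$) is fine when ${\rm char}(\mathbb F)\neq 2$, but your characteristic-$2$ fallback is not sound. Note~\ref{note:HuangData} does not say that two parameters producing the same spectrum are similar; it classifies the Huang data of the whole triple. Moreover the claim ``$B$ and $C$ have the same spectrum $\Rightarrow b\sim c$'' fails, or at least needs an argument you do not give, when $\lbrack d+1\rbrack_q=0$, i.e. $q^{2(d+1)}=1$, which the running assumptions permit: writing $\theta'_i=\beta q^{2i}+\beta^{-1}q^{-2i}$ with $\beta=bq^{-d}$, the eigenvalue set is unchanged when $\beta$ is multiplied by any $(d+1)$-st root of unity, so in that case the spectrum does not determine $b$ up to inversion and your trace/spectrum route collapses. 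The paper's argument avoids this entirely: modularity gives an isomorphism of Leonard triples $A,B,C\cong B,C,A$ by \cite[Lemma~9.3]{mlt}; by Lemma~\ref{lem:sym} the triple $B,C,A$ has Huang data $(b,c,a,d)$, so $(b,c,a,d)$ is a Huang datum for $A,B,C$, and Note~\ref{note:HuangData} then forces $a,b,c$ to be mutually similar in every characteristic and for every admissible $q$.
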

\begin{proof}
${\rm (i)} \Leftrightarrow {\rm (ii)}$
By Lemma
\ref{lem:3part}.
\\
\noindent
${\rm (i)} \Rightarrow {\rm (iii)}$
Recall that $W A=AW$ and $W'B=BW'$.
Since $\overline A=0$ we have $W^{-1}BW=C$.
Since $\overline B=0$ we have $C=W'A(W')^{-1}$.
So $W^{-1}BW = W'A(W')^{-1}$. Now in the sense of
 \cite[Definition~1.2]{curt2}, the pair $A,B$
is a spin Leonard pair with associated Boltman pair
$W^{-1}, (W')^{-1}$. Now by
 \cite[Theorem~1.6]{curt2}, the Leonard triple $A,B,C$ is modular.
\\
${\rm (iii)} \Rightarrow {\rm (ii)}$
By \cite[Lemma~9.3]{mlt} the Leonard triples $A,B,C$ and $B,C,A$
are isomorphic.
The scalars $a,b,c$ are mutually similar by
Note
\ref{note:HuangData}
and Lemma
\ref{lem:sym}.
\end{proof}

\begin{assumption} 
\label{ass:abc}
\rm
For the rest of this section, assume
 $\overline A=\overline B=\overline C=0$. 
By 
Corollary \ref{cor:ABCZero} the scalars
$a,b,c$ are mutually similar.  Inverting the eigenvalue orderings
$\lbrace \theta_i \rbrace_{i=0}^d$,
$\lbrace \theta'_i \rbrace_{i=0}^d$,
$\lbrace \theta''_i \rbrace_{i=0}^d$
as necessary, we will assume that $a=b=c$.
\end{assumption}

\begin{lemma} \label{lem:eigsame}
Under Assumption
\ref{ass:abc} we have
$\theta_i = \theta'_i = \theta''_i$ for $0 \leq i \leq d$.
\end{lemma}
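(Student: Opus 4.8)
The plan is to recall the explicit eigenvalue formulas from Definition~\ref{lem:eigvalform}, namely
\[
\theta_i = a q^{2i-d}+a^{-1}q^{d-2i}, \qquad
\theta'_i = b q^{2i-d}+b^{-1}q^{d-2i}, \qquad
\theta''_i = c q^{2i-d}+c^{-1}q^{d-2i},
\]
and then simply substitute $a=b=c$, which is the standing hypothesis under Assumption~\ref{ass:abc}. Since the three formulas are literally the same expression with the parameter $a$, $b$, $c$ respectively, setting $a=b=c$ forces $\theta_i=\theta'_i=\theta''_i$ for all $0\le i\le d$. So the proof is essentially a one-line substitution.

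The one subtlety worth spelling out is that Assumption~\ref{ass:abc} does not start from $a=b=c$ on the nose: it starts from Corollary~\ref{cor:ABCZero}, which only gives that $a,b,c$ are mutually similar (i.e.\ equal or reciprocal in pairs), and then one inverts the eigenvalue orderings $\{\theta_i\}$, $\{\theta'_i\}$, $\{\theta''_i\}$ as needed to arrange $a=b=c$. I should note that the inversion operation is legitimate here: by the discussion in Section~4 (following \cite[Lemma~2.4]{TD00}), if $\{\theta_i\}_{i=0}^d$ is a standard ordering of the eigenvalues of $A$ then so is $\{\theta_{d-i}\}_{i=0}^d$, and by Lemma~\ref{lem:inv} this inversion replaces $a$ by $a^{-1}$ in the formula $\theta_i=a q^{2i-d}+a^{-1}q^{d-2i}$; the analogous statements hold for $B$ and $C$. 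Thus, given that $a,b,c$ are mutually similar, after the allowed inversions we genuinely have all three Huang parameters equal to a common value, and Definition~\ref{def:qRAC}(i) still holds with the (possibly inverted) orderings.

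Once that bookkeeping is in place, the conclusion is immediate: with the common parameter value, each of $\theta_i$, $\theta'_i$, $\theta''_i$ equals $a q^{2i-d}+a^{-1}q^{d-2i}$, hence they coincide for every $i$. There is no real obstacle here — the only thing to be careful about is to invoke the correct earlier results (Lemma~\ref{lem:inv} for the effect of inversion, Corollary~\ref{cor:ABCZero} and Assumption~\ref{ass:abc} for the reduction to $a=b=c$) so that the seemingly trivial substitution is properly justified. I would therefore write the proof as: ``By Assumption~\ref{ass:abc} we have $a=b=c$. The claim is then immediate from the formulas for $\theta_i$, $\theta'_i$, $\theta''_i$ in Definition~\ref{lem:eigvalform}.''
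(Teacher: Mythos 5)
Your proof is correct and matches the paper's, which also just cites the eigenvalue formulas in Definition \ref{lem:eigvalform} together with $a=b=c$ from Assumption \ref{ass:abc}. The extra remarks about inversions are harmless but unnecessary, since Assumption \ref{ass:abc} already builds the reduction to $a=b=c$ into its statement.
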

\begin{proof}  By
(\ref{eq:eigvalform}) and $a=b=c$.
\end{proof}

\noindent Recall the maps $W$, $W'$, $W''$ from
Definition
\ref{def:WWW}.

\begin{definition}
\label{def:P}
\rm Define $P=W'W$.
\end{definition}

\begin{lemma} 
\label{lem:ABCP}
Under Assumption
\ref{ass:abc} we have
\begin{eqnarray*}
P^{-1} A P = B, \qquad \quad
P^{-1} B P = C, \qquad \quad
P^{-1} C P = A.
%%%%%%%%\label{eq:ABCP}
\end{eqnarray*}
\end{lemma}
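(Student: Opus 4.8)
The plan is to use the conjugation tables from Proposition~\ref{prop:ABCtable} together with the vanishing hypotheses on $\overline A,\overline B,\overline C$. Recall $P=W'W$, so for $X\in\{A,B,C\}$ we have $P^{-1}XP=W^{-1}(W')^{-1}XW'W$; I would compute this as a two-step conjugation, first by $W'$ and then by $W$, reading each step off Proposition~\ref{prop:ABCtable}.

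First I would handle $P^{-1}AP$. From the $(W')^{-1}XW'$ row of Proposition~\ref{prop:ABCtable} with $X=A$, we get $(W')^{-1}AW' = C+\frac{AB-BA}{q-q^{-1}}-\overline B$. Under Assumption~\ref{ass:abc} we have $\overline B=0$, so this is $C+\frac{AB-BA}{q-q^{-1}}$. Now conjugate by $W$: using the $W^{-1}XW$ row, $W^{-1}CW = B+\frac{CA-AC}{q-q^{-1}}-\overline A$ and $W^{-1}\bigl(\frac{AB-BA}{q-q^{-1}}\bigr)W$ is computed from $W^{-1}AW=A$ and $W^{-1}BW=C+\overline A$. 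Since $\overline A=0$ this gives $W^{-1}BW=C$, hence $W^{-1}CW=B+\frac{CA-AC}{q-q^{-1}}$ and $W^{-1}\bigl(\frac{AB-BA}{q-q^{-1}}\bigr)W=\frac{AC-CA}{q-q^{-1}}$. Adding, $P^{-1}AP = B+\frac{CA-AC}{q-q^{-1}}+\frac{AC-CA}{q-q^{-1}} = B$, as desired. The same bookkeeping, applied with $X=B$ and $X=C$ and using $\overline A=\overline B=\overline C=0$ throughout, should yield $P^{-1}BP=C$ and $P^{-1}CP=A$; in fact once $\overline A=0$ one has $W^{-1}BW=C$ directly (this is $P^{-1}$-conjugation folded into the first half being trivial), and $(W')^{-1}CW'=A$ since $\overline B=0$, so $P^{-1}BP = W^{-1}(W')^{-1}BW'W = W^{-1}BW = C$ essentially immediately once one checks $(W')^{-1}BW'=B$, which holds because $B$ commutes with $W'$.

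Actually the cleanest route for the $B$ and $C$ relations is even more direct: $P^{-1}BP = W^{-1}\bigl((W')^{-1}BW'\bigr)W = W^{-1}BW$ because $B$ commutes with $W'$ (Lemma~\ref{lem:ABCcom}(ii)); and $W^{-1}BW = \overline A + C = C$ since $\overline A=0$ (Definition~\ref{def:Aoverline}). Similarly $P^{-1}CP = W^{-1}\bigl((W')^{-1}CW'\bigr)W = W^{-1}\bigl(\overline B + A\bigr)W = W^{-1}AW = A$, using Lemma~\ref{lem:ABCcom}(i) ($A$ commutes with $W$) and $\overline B=0$. Only $P^{-1}AP=B$ genuinely requires the commutator computation above, and there the key cancellation is that the two commutator terms $\frac{CA-AC}{q-q^{-1}}$ and $\frac{AC-CA}{q-q^{-1}}$ are negatives of each other.

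I do not expect any serious obstacle: everything reduces to substituting $\overline A=\overline B=\overline C=0$ into Proposition~\ref{prop:ABCtable} and Definition~\ref{def:Aoverline} and tracking one commutator cancellation. The only point requiring a little care is the conjugation $W^{-1}\bigl(\frac{AB-BA}{q-q^{-1}}\bigr)W$ in the proof of $P^{-1}AP=B$: one must expand it as $\frac{(W^{-1}AW)(W^{-1}BW)-(W^{-1}BW)(W^{-1}AW)}{q-q^{-1}}$ and then use $W^{-1}AW=A$, $W^{-1}BW=C$ to rewrite it as $\frac{AC-CA}{q-q^{-1}}$, which then cancels the $\frac{CA-AC}{q-q^{-1}}$ coming from $W^{-1}CW$. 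Alternatively one can cite Lemma~\ref{lem:Aoverline2} and Lemma~\ref{lem:WBWi} to organize the same cancellation.
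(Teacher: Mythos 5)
Your proof is correct and follows essentially the same route as the paper, whose proof simply cites Proposition~\ref{prop:ABCtable} and Definition~\ref{def:P}: you have merely written out the two-step conjugation by $W'$ and then $W$ with $\overline A=\overline B=\overline C=0$ substituted, including the commutator cancellation for $P^{-1}AP$ that the paper leaves implicit.
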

\begin{proof} Use Proposition
\ref{prop:ABCtable} and 
Definition
\ref{def:P}.
\end{proof}

\begin{lemma} \label{lem:EiP}
Under Assumption
\ref{ass:abc} the following holds for
$0 \leq i \leq d$:
\begin{eqnarray*}
P^{-1} E_i P = E'_i, \qquad \quad
P^{-1} E'_i P = E''_i, \qquad \quad
P^{-1} E''_i P = E_i.
%%%%%%%%%%\label{eq:EEEP}
\end{eqnarray*}
\end{lemma}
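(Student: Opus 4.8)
The plan is to derive the three conjugation identities directly from the explicit formula (\ref{eq:ei}) expressing a primitive idempotent as a polynomial in its generator, combined with the conjugation relations already obtained in Lemma \ref{lem:ABCP} and the coincidence of eigenvalue sequences in Lemma \ref{lem:eigsame}.

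First I would note that since $P$ is invertible, the map $X \mapsto P^{-1}XP$ is an $\mathbb F$-algebra automorphism of ${\rm End}(V)$; in particular it sends any polynomial in $A$ to the same polynomial in $P^{-1}AP$, which equals $B$ by Lemma \ref{lem:ABCP}. Applying this to the expression for $E_i$ coming from (\ref{eq:ei}) gives
\[
P^{-1}E_iP = \prod_{\stackrel{0 \leq j \leq d}{j \neq i}} \frac{B-\theta_j I}{\theta_i-\theta_j}.
\]
Now invoke Lemma \ref{lem:eigsame}, which under Assumption \ref{ass:abc} gives $\theta_j=\theta'_j$ for $0 \leq j \leq d$. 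Substituting this into the right-hand side above and comparing with the formula (\ref{eq:ei}) for the primitive idempotents of $B$, the right-hand side is precisely $E'_i$, so $P^{-1}E_iP = E'_i$. The identities $P^{-1}E'_iP = E''_i$ and $P^{-1}E''_iP = E_i$ are obtained in exactly the same way, using $P^{-1}BP=C$ together with $\theta'_j=\theta''_j$, and $P^{-1}CP=A$ together with $\theta''_j=\theta_j$, respectively.

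There is essentially no obstacle: the argument is formal once Lemma \ref{lem:eigsame} is in hand, the only genuine input being the equality of the three eigenvalue sequences guaranteed by $a=b=c$ and the fact (recorded in Section 3) that each $E_i$ lies in $\langle A\rangle$. If one prefers to avoid the polynomial formula, an alternative is the spectral argument: $P^{-1}E_iP$ is an idempotent of rank $1$ (being $P^{-1}$ applied to the one-dimensional space $E_iV$) that commutes with $P^{-1}AP=B$ and on whose image $B$ acts as the scalar $\theta_i=\theta'_i$, hence it equals $E'_i$ by uniqueness of the primitive idempotent of $B$ for $\theta'_i$. Either route is short; I would present the polynomial one as the most direct.
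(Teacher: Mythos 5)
Your proof is correct and follows the same route as the paper: the paper's proof is simply to combine the formula (\ref{eq:ei}) for the primitive idempotents with Lemmas \ref{lem:eigsame} and \ref{lem:ABCP}, which is exactly your polynomial argument. The alternative spectral argument you mention is also fine, but the first route is the one the paper intends.
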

\begin{proof} Use 
(\ref{eq:ei}) along with
Lemmas 
\ref{lem:eigsame},
\ref{lem:ABCP}.
\end{proof}

\begin{lemma} \label{lem:WWWP}
Under Assumption
\ref{ass:abc} we have
\begin{eqnarray*}
P^{-1} W P = W', \qquad \quad
P^{-1} W' P = W'', \qquad \quad
P^{-1} W'' P = W.
%%%%\label{eq:WWWP}
\end{eqnarray*}
\end{lemma}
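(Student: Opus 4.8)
The plan is to reduce everything to the action of conjugation by $P$ on the primitive idempotents, which has already been computed in Lemma \ref{lem:EiP}, and then exploit the fact that under Assumption \ref{ass:abc} the scalar coefficients appearing in the definitions of $W$, $W'$, $W''$ are literally the same.

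First I would recall from Definition \ref{def:WWW} that
\[
W = \sum_{i=0}^d (-1)^i a^{-i} q^{i(d-i)} E_i,
\qquad
W' = \sum_{i=0}^d (-1)^i b^{-i} q^{i(d-i)} E'_i,
\qquad
W'' = \sum_{i=0}^d (-1)^i c^{-i} q^{i(d-i)} E''_i,
\]
and note that under Assumption \ref{ass:abc} we have $a=b=c$, so all three scalar sequences $\{(-1)^i a^{-i} q^{i(d-i)}\}_{i=0}^d$, $\{(-1)^i b^{-i} q^{i(d-i)}\}_{i=0}^d$, $\{(-1)^i c^{-i} q^{i(d-i)}\}_{i=0}^d$ coincide. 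Then I would conjugate the expression for $W$ by $P$ term by term and apply the first relation of Lemma \ref{lem:EiP}, namely $P^{-1} E_i P = E'_i$, to obtain $P^{-1} W P = \sum_{i=0}^d (-1)^i a^{-i} q^{i(d-i)} E'_i$; since $a=b$ this is exactly $W'$. The same argument using $P^{-1} E'_i P = E''_i$ and $b=c$ gives $P^{-1} W' P = W''$, and using $P^{-1} E''_i P = E_i$ and $c=a$ gives $P^{-1} W'' P = W$.

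There is essentially no hard step here: the computation is a routine term-by-term conjugation, and the only point that needs care is the matching of the scalar coefficients, which is precisely what $a=b=c$ (available by Assumption \ref{ass:abc}) provides. If anything, the "obstacle" is purely bookkeeping—ensuring that the cyclic pattern of Lemma \ref{lem:EiP} ($E_i \to E'_i \to E''_i \to E_i$) is aligned with the cyclic pattern $W \to W' \to W'' \to W$ asserted in the statement—and this alignment is immediate from Definition \ref{def:P} ($P = W'W$) together with the way $P$ cycles $A,B,C$ in Lemma \ref{lem:ABCP}.
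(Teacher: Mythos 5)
Your proposal is correct and follows exactly the paper's route: the paper's own proof is precisely ``Use Definition \ref{def:WWW} and Lemma \ref{lem:EiP} and $a=b=c$,'' i.e.\ term-by-term conjugation of the idempotent expansions with the scalar coefficients identified via $a=b=c$. Nothing further is needed.
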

\begin{proof}
Use Definition
\ref{def:WWW} and
Lemma
\ref{lem:EiP} and $a=b=c$.
\end{proof}

\begin{proposition}
\label{prop:PPPE}
Under Assumption
\ref{ass:abc} the element $P$ is equal to each of
\begin{eqnarray*}
W'W, \qquad 
W''W', \qquad 
WW''.
\end{eqnarray*}
\end{proposition}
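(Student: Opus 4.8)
The plan is to obtain the two nontrivial identities $W''W' = P$ and $WW'' = P$ directly from the conjugation relations of Lemma \ref{lem:WWWP}, using only the definition $P = W'W$ and the invertibility of $W, W', W''$; the third equality $P = W'W$ is the definition of $P$ and needs nothing.

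For the identity $W''W' = P$, I would start from $P^{-1}W''P = W$ (Lemma \ref{lem:WWWP}), rewrite it as $W''P = PW$, and substitute $P = W'W$ (Definition \ref{def:P}) on both sides to get $W''W'W = W'WW$. Cancelling the invertible factor $W$ on the right (Lemma \ref{lem:Wi}) yields $W''W' = W'W = P$. For the identity $WW'' = P$, I would similarly start from $P^{-1}W'P = W''$, rewrite it as $W'P = PW''$, substitute $P = W'W$ to get $W'W'W = W'WW''$, and cancel the invertible factor $W'$ on the left to get $WW'' = W'W = P$. Combining, $P = W'W = W''W' = WW''$, which is the assertion.

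I do not anticipate a genuine obstacle: the only points that need care are that the cancellations are legitimate because $W, W', W''$ are invertible, and that the two factors of $P = W'W$ must be kept in the correct order when substituting into $W''P$, $PW$, $W'P$, $PW''$. As a consistency check one may note that this is compatible with Theorem \ref{prop:prod} and Theorem \ref{prop:com}: once $W'W = W''W' = WW''$, the product in Theorem \ref{prop:prod} reads $P^3 = (abc)^{-d}q^{d(d-1)}I$, so under Assumption \ref{ass:abc} the common value $P$ cubes to $a^{-3d}q^{d(d-1)}I$, recovering Curtin's $P^3$ being a scalar multiple of the identity.
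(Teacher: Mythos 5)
Your proof is correct and is essentially the paper's own argument: the paper likewise takes the relations $P^{-1}W'P=W''$ and $P^{-1}W''P=W$ from Lemma \ref{lem:WWWP}, eliminates $P$ using $P=W'W$, and cancels, exactly as you spell out. Nothing is missing.
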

\begin{proof}
In the equations $P^{-1}W'P=W''$ and
$P^{-1}W''P=W$,
 eliminate $P$ using $P=W'W$.
 \end{proof}

\begin{corollary}
Under Assumption
\ref{ass:abc} we have
$P^3 = a^{-3d}q^{d(d-1)}I$.
\end{corollary}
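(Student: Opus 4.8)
The plan is to obtain this as an immediate corollary of the product formula already in hand, with essentially no new computation. First I would recall that we are working under Assumption~\ref{ass:abc}, so in particular $a=b=c$, and that Proposition~\ref{prop:PPPE} identifies $P$ with each of the three elements $W'W$, $W''W'$, $WW''$ listed in \eqref{eq:three}. Consequently the specific product $(W'W)(W''W')(WW'')$ that appears in Theorem~\ref{prop:prod} is literally $P\cdot P\cdot P=P^{3}$.

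Next I would invoke Theorem~\ref{prop:prod}, which asserts that this product equals $(abc)^{-d}q^{d(d-1)}I$. Combining the two observations yields $P^{3}=(abc)^{-d}q^{d(d-1)}I$, and since $a=b=c$ we have $(abc)^{-d}=a^{-3d}$, giving the claimed identity $P^{3}=a^{-3d}q^{d(d-1)}I$.

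There is no genuine obstacle here: all the substantive work was carried out earlier, namely the $(d,0)$-entry calculation underlying Proposition~\ref{prop:W2W2W2} and the rearrangement of factors in Theorem~\ref{prop:prod} (which relies on the mutual commutativity from Theorem~\ref{prop:com}). If one prefers to argue from scratch rather than cite Theorem~\ref{prop:prod}, one can instead write $P^{3}=(WW'')(W''W')(W'W)=W(W'')^{2}(W')^{2}W$ using Proposition~\ref{prop:PPPE}, then note that $(W'')^{2}(W')^{2}$ commutes with $A$ (as shown in the proof of Proposition~\ref{prop:W2W2W2}) and hence with $W\in\langle A\rangle$, so that $P^{3}=(W'')^{2}(W')^{2}W^{2}$, which equals $(abc)^{-d}q^{d(d-1)}I=a^{-3d}q^{d(d-1)}I$ by Proposition~\ref{prop:W2W2W2}. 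Either route completes the proof in a few lines.
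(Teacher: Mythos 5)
Your proof is correct and follows the paper's own argument exactly: combine Proposition~\ref{prop:PPPE} (so the product in Theorem~\ref{prop:prod} is $P^3$), Theorem~\ref{prop:prod}, and $a=b=c$. The alternative route you sketch is fine too, but the main argument is precisely the paper's.
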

\begin{proof} Use Theorem
\ref{prop:prod},
Proposition \ref{prop:PPPE}, and $a=b=c$.
\end{proof}

\section{Acknowledgments}
The author thanks Brian Curtin, Edward Hanson, and Kazumasa Nomura for giving this paper
a close reading and offering valuable suggestions.
%Pascal Baseilhac 
%Steffan Kolb 
%Brian Curtin
%Hau-wen Huang 
%Kazumasa Nomura
%Tatsuro Ito
%Sarah Bockting-Conrad
%Jae-ho Lee
%Ed Hanson
%Arlene Pascasio 
%Yang Yang
%Zhaochen Wang

%for some insightful comments that lead to improvements
%in the paper.

\bigskip

\noindent Paul Terwilliger \hfil\break
\noindent Department of Mathematics \hfil\break
\noindent University of Wisconsin \hfil\break
\noindent 480 Lincoln Drive \hfil\break
\noindent Madison, WI 53706-1388 USA \hfil\break
\noindent email: {\tt terwilli@math.wisc.edu }\hfil\break

\end{document}